\newcommand{\Mdef}[2]{\newcommand{#1}{\relax \ifmmode #2 \else $#2$\fi}}
\newcommand{\im}{\mathrm{im}}
\newcommand{\tensor}{\otimes}
\newcommand{\Hom}{\mathrm{Hom}}
\Mdef{\bhom}{\mathbf{\hat{H}om}}
\Mdef{\Mod}{\mathrm{mod}}
\newcommand{\st}{\; | \;}
\newtheorem{thm}{Theorem}[section]
\newtheorem{lemma}[thm]{Lemma}
\newtheorem{prop}[thm]{Proposition}
\newtheorem{cor}[thm]{Corollary}
\theoremstyle{definition}
\newtheorem{defn}[thm]{Definition}
\newtheorem{example}[thm]{Example}
\newtheorem{remark}[thm]{Remark}
\newcommand{\qqed}{\qed \\[1ex]}
\renewenvironment{proof}[1][\hspace*{-.8ex}]{\noindent {\bf Proof #1:\;}}{\qqed}
\Mdef{\PH} {\Phi^H}
\Mdef{\PK} {\Phi^K}
\Mdef{\PL} {\Phi^L}
\Mdef{\PT} {\Phi^{\T}}
\Mdef{\ef}{E{\cF}_+}
\Mdef{\etf}{\widetilde{E}{\cF}}
\Mdef{\eg}{E{G}_+}
\Mdef{\etg}{\tilde{E}{G}}
\Mdef{\infl}{\mathrm{inf}}
\Mdef{\defl}{\mathrm{def}}
\Mdef{\res}{\mathrm{res}}
\Mdef{\ind}{\mathrm{ind}}
\Mdef{\coind}{\mathrm{coind}}
\Mdef{\univ}{\mathcal{U}}
\Mdef{\Fp}{\mathbb{F}_p}
\Mdef{\Zpinfty}{\Z /p^{\infty}}
\Mdef{\Zpadic}{\Z_p^{\wedge}}
\newcommand{\bi}{\begin{itemize}}
\newcommand{\be}{\begin{enumerate}}
\newcommand{\bc}{\begin{center}}
\newcommand{\bd}{\begin{description}}
\newcommand{\ei}{\end{itemize}}
\newcommand{\ee}{\end{enumerate}}
\newcommand{\ec}{\end{center}}
\newcommand{\ed}{\end{description}}
\newcommand{\dichotomy}[2]{\left\{ \begin{array}{ll}#1\\#2 \end{array}\right.}
\newcommand{\adjunction}[4]{
\diagram
#1:#2 \rrto<0.7ex> &&
#3  \llto<0.7ex> :#4 
\enddiagram}
\newcommand{\lra}{\longrightarrow}
\newcommand{\lla}{\longleftarrow}
\newcommand{\sets}{\mathbf{Sets}}
\newcommand{\crings}{\mathbf{Rings}}
\newcommand{\rings}{\mathbf{Rings}}
\Mdef{\we}{\mathbf{we}}
\Mdef{\fib}{\mathbf{fib}}
\Mdef{\cof}{\mathbf{cof}}
\Mdef{\BI}{\mathcal{BI}}
\newcommand{\ilim}{\mathop{ \mathop{\mathrm{lim}} \limits_\leftarrow} \nolimits}
\newcommand{\colim}{\mathop{  \mathop{\mathrm {lim}} \limits_\rightarrow} \nolimits}
\Mdef{\A}{\mathbb{A}}
\Mdef{\B}{\mathbb{B}}
\Mdef{\C}{\mathbb{C}}
\Mdef{\D}{\mathbb{D}}
\Mdef{\E}{\mathbb{E}}
\Mdef{\T}{\mathbb{T}}
\Mdef{\F}{\mathbb{F}}
\Mdef{\G}{\mathbb{G}}
\Mdef{\I}{\mathbb{I}}
\Mdef{\N}{\mathbb{N}}
\Mdef{\Q}{\mathbb{Q}}
\Mdef{\R}{\mathbb{R}}
\Mdef{\bbS}{\mathbb{S}}
\Mdef{\Z}{\mathbb{Z}}
\Mdef{\bA}{\mathbb{A}}
\Mdef{\bB}{\mathbb{B}}
\Mdef{\bC}{\mathbb{C}}
\Mdef{\bD}{\mathbb{D}}
\Mdef{\bE}{\mathbb{E}}
\Mdef{\bF}{\mathbb{F}}
\Mdef{\bG}{\mathbb{G}}
\Mdef{\bH}{\mathbb{H}}
\Mdef{\bI}{\mathbb{I}}
\Mdef{\bJ}{\mathbb{J}}
\Mdef{\bK}{\mathbb{K}}
\Mdef{\bL}{\mathbb{L}}
\Mdef{\bM}{\mathbb{M}}
\Mdef{\bN}{\mathbb{N}}
\Mdef{\bO}{\mathbb{O}}
\Mdef{\bP}{\mathbb{P}}
\Mdef{\bQ}{\mathbb{Q}}
\Mdef{\bR}{\mathbb{R}}
\Mdef{\bS}{\mathbb{S}}
\Mdef{\bT}{\mathbb{T}}
\Mdef{\bU}{\mathbb{U}}
\Mdef{\bV}{\mathbb{V}}
\Mdef{\bW}{\mathbb{W}}
\Mdef{\bX}{\mathbb{X}}
\Mdef{\bY}{\mathbb{Y}}
\Mdef{\bZ}{\mathbb{Z}}
\Mdef{\cA}{\mathcal{A}}
\Mdef{\cB}{\mathcal{B}}
\Mdef{\cC}{\mathcal{C}}
\Mdef{\mcD}{\mathcal{D}} 
\Mdef{\cE}{\mathcal{E}}
\Mdef{\cF}{\mathcal{F}}
\Mdef{\cG}{\mathcal{G}}
\Mdef{\mcH}{\mathcal{H}} 
\Mdef{\cI}{\mathcal{I}}
\Mdef{\cJ}{\mathcal{J}}
\Mdef{\cK}{\mathcal{K}}
\Mdef{\mcL}{\mathcal{L}}
\Mdef{\cM}{\mathcal{M}}
\Mdef{\cN}{\mathcal{N}}
\Mdef{\cO}{\mathcal{O}}
\Mdef{\cP}{\mathcal{P}}
\Mdef{\cQ}{\mathcal{Q}}
\Mdef{\mcR}{\mathcal{R}}
\Mdef{\cS}{\mathcal{S}}
\Mdef{\cT}{\mathcal{T}}
\Mdef{\cU}{\mathcal{U}}
\Mdef{\cV}{\mathcal{V}}
\Mdef{\cW}{\mathcal{W}}
\Mdef{\cX}{\mathcal{X}}
\Mdef{\cY}{\mathcal{Y}}
\Mdef{\cZ}{\mathcal{Z}}
\Mdef{\ca}{\mathcal{a}}
\Mdef{\ct}{\mathcal{t}}
\Mdef{\At}{\tilde{A}}
\Mdef{\Bt}{\tilde{B}}
\Mdef{\Ct}{\tilde{C}}
\Mdef{\Et}{\tilde{E}}
\Mdef{\Ht}{\tilde{H}}
\Mdef{\Kt}{\tilde{K}}
\Mdef{\Lt}{\tilde{L}}
\Mdef{\Mt}{\tilde{M}}
\Mdef{\Nt}{\tilde{N}}
\Mdef{\Pt}{\tilde{P}}
\Mdef{\tA}{\tilde{A}}
\Mdef{\tB}{\tilde{B}}
\Mdef{\tC}{\tilde{C}}
\Mdef{\tE}{\tilde{E}}
\Mdef{\tH}{\tilde{H}}
\Mdef{\tK}{\tilde{K}}
\Mdef{\tL}{\tilde{L}}
\Mdef{\tM}{\tilde{M}}
\Mdef{\tN}{\tilde{N}}
\Mdef{\tP}{\tilde{P}}
\Mdef{\ft}{\tilde{f}}
\Mdef{\xt}{\tilde{x}}
\Mdef{\yt}{\tilde{y}}
\Mdef{\Ab}{\overline{A}}
\Mdef{\Bb}{\overline{B}}
\Mdef{\Cb}{\overline{C}}
\Mdef{\Db}{\overline{D}}
\Mdef{\Eb}{\overline{E}}
\Mdef{\Fb}{\overline{F}}
\Mdef{\Gb}{\overline{G}}
\Mdef{\Hb}{\overline{H}}
\Mdef{\Ib}{\overline{I}}
\Mdef{\Jb}{\overline{J}}
\Mdef{\Kb}{\overline{K}}
\Mdef{\Lb}{\overline{L}}
\Mdef{\Mb}{\overline{M}}
\Mdef{\Nb}{\overline{N}}
\Mdef{\Ob}{\overline{O}}
\Mdef{\Pb}{\overline{P}}
\Mdef{\Qb}{\overline{Q}}
\Mdef{\Rb}{\overline{R}}
\Mdef{\Sb}{\overline{S}}
\Mdef{\Tb}{\overline{T}}
\Mdef{\Ub}{\overline{U}}
\Mdef{\Vb}{\overline{V}}
\Mdef{\Wb}{\overline{W}}
\Mdef{\Xb}{\overline{X}}
\Mdef{\Yb}{\overline{Y}}
\Mdef{\Zb}{\overline{Z}}
\Mdef{\db}{\overline{d}}
\Mdef{\hb}{\overline{h}}
\Mdef{\qb}{\overline{q}}
\Mdef{\rb}{\overline{r}}
\Mdef{\tb}{\overline{t}}
\Mdef{\ub}{\overline{u}}
\Mdef{\vb}{\overline{v}}
\Mdef{\hc}{\hat{c}}
\Mdef{\he}{\hat{e}}
\Mdef{\hf}{\hat{f}}
\Mdef{\hA}{\hat{A}}
\Mdef{\hH}{\hat{H}}
\Mdef{\hJ}{\hat{J}}
\Mdef{\hM}{\hat{M}}
\Mdef{\hP}{\hat{P}}
\Mdef{\hQ}{\hat{Q}}
\Mdef{\thetab}{\overline{\theta}}
\Mdef{\phib}{\overline{\phi}}
\Mdef{\uA}{\underline{A}}
\Mdef{\uB}{\underline{B}}
\Mdef{\uC}{\underline{C}}
\Mdef{\uD}{\underline{D}}
\Mdef{\bolda}{\mathbf{a}}
\Mdef{\boldb}{\mathbf{b}}
\Mdef{\bfD}{\mathbf{D}}
\Mdef{\fm}{\frak{m}}
\Mdef{\fp}{\frak{p}}
\Mdef{\eps}{\epsilon}
\newcommand{\connsub}{\mathrm{ConnSub}}
\newcommand{\flag}{\mbox{flag}}
\newcommand{\cEi}{\cE^{-1}}
\newcommand{\mbd}{\mathbf{d}}
\newcommand{\sub}{\mathrm{Sub}}
\newcommand{\ist}{i_{\sigma}^{\tau}}
\newcommand{\siftyV}[1]{S^{\infty V(#1)}}
\newcommand{\Sigmat}{\widetilde{\Sigma}}
\newcommand{\Sigmab}{\overline{\Sigma}}
\newcommand{\qp}{P}
\newcommand{\pes}{\pi^e_{!}}
\newcommand{\RR}{{\mathbb{R}}}
\newcommand{\RRa}{{\mathbb{R}}_a}
\newcommand{\RRc}{{\mathbb{R}}_c}
\newcommand{\RRcb}{{\overline{\mathbb{R}}}_c}
\newcommand{\RRd}{{\mathbb{R}}_d}
\newcommand{\RRb}{\overline{R}}
\newcommand{\RRf}{R^f}
\newcommand{\RRbf}{\RRb^f}
\newcommand{\TC}{\mathcal{TC}}
\newcommand{\cAp}{\cA^p}
\newcommand{\cAf}{\cA^f}
\begin{document}
\title{Rational torus-equivariant stable homotopy III: comparison of
   models.}
\author{J.P.C.Greenlees}
\address{School of Mathematics and Statistics, Hicks Building, 
Sheffield S3 7RH. UK.}
\email{j.greenlees@sheffield.ac.uk}
\date{}

\begin{abstract}
We give details of models for rational torus equivariant homotopy theory
(a) based  on   all subgroups, connected subgroups or dimensions of
subgroups and (b) based on pairs of subgroups or general flags of subgroups. We provide comparison
functors and show the models are equivalent. 
\end{abstract}

\thanks{I am grateful to MSRI for support and providing an excellent environment
  for organizing these ideas during the Algebraic Topology Programme
  in 2014}
\maketitle

\tableofcontents

\section{Background}
\label{sec:motivation}

The author's long standing project aims to give algebraic models for
rational stable equivariant homotopy categories. More precisely, the
aim is to establish a Quillen equivalence between the category of
rational $G$-spectra and the category of differential graded objects
in an abelian category $\cA (G)$. 

The most complicated results to date have been in the case when $G$ is a torus. 
As the project has developed, the technical details of different parts
of the argument have made
it convenient to formulate the definition of the category $\cA (G)$ in
several different ways. The narrow purpose of this paper
is to give a systematic explanation of why these different
formulations give equivalent categories.

The very purpose of this paper is to provide the proper language to describe the
comparison, so we cannot fully describe our results 
until we have introduced a considerable amount of
infrastructure. For the purposes of the introduction we content
ourselves with an informal account.

\subsection{History}
\label{subsec:history}
The category  $\cA (G)$ is assembled from
isotropical  information associated to the various closed subgroups of $G$,  and it is principally
the way this information is put  together that has evolved. 
 
 When $G$ is the circle  group,  the models are simple enough for
 comparisons to be side comments in \cite{s1q}, but already there is a distinction between
 whether the data is indexed by {\em connected} closed subgroups ($c$) or  
{\em all} closed subgroups ($a$). Only the $c$ version was made explicit in \cite{s1q},
despite numerous motivations implicitly using the $a$ version.  

When it comes to higher tori, the distinction between $a$ and $c$
versions is just as important, but for simplicity we just discuss the
$c$ version for now. 
The paper  \cite{tnq1} gives an account of a model $\cA
(G)$. However it became important in \cite{tnqcore} to make some of the
details more explicit and a variant was introduced in
\cite{tnq2}. In the present paper,  this version from \cite{tnq2} is called
$\cA^p_c(G)$, where  $p$ refers to the fact that the data is assembled
from {\em pairs} of subgroups. The
category $\cAp_c(G)$  is the algebraically simplest formulation, and will remain the essential
basis for calculation. 

On the other hand, the proof in \cite{tnqcore}
is homotopical, so it is essential to explicitly include information
about how pairs of subgroups fit together to make longer chains, and it is
convenient to treat all subgroups of the same dimension
together. Altogether we therefore have a structure based on dimensions
of flags, and it is shown that the sphere spectrum is a homotopy
pullback of a diagram of ring spectra indexed on this diagram. This
leads to an algebraic model $\cA_d^f(G)$ based on {\em dimensions} of
{\em flags} of 
subgroups, and this model is essential for comparison with homotopy. 

Our narrow purpose is therefore to show explicitly that  $\cAp_c(G)$ is
equivalent to $\cA_d^f(G)$. 

\subsection{Going further}

We have described the motivation explicit in past treatments, but
the present paper also completes the unfinished business of relating the $a$ and $c$
versions.  It has always been clear that the  data in the $c$ model
is assembled from that in the $a$ model, and that the data in the $a$
model can be recovered from the $c$ model. We identify here precisely what additional
structure is required in the $a$ model to give an equivalence between
the models. It turns out that the additional data should be
thought of as a continuity condition on the subgroups with a given 
identity component; the continuity condition in the $a$ models is
reflected purely algebraically in the $c$ models, and both languages are useful. 

There are at least four further benefits from understanding this type
of algebraic model and for being able to move routinely between them. 

Firstly, it is becoming clear  that when groups other than the torus are
concerned, the partially ordered set structures considered here will
need to be augmented in two ways (a)  by adding an 
action of a finite group, just as representation theory works with the
action of the Weyl group on the maximal torus (see
\cite{AGtoral}) and (b)  the diagrams of rings and
modules considered here can be viewed as sheaves over {\em discrete} topological
categories; in the more general case we should consider sheaves over
the topological category whose {\em space} of objects consists of the
closed subgroups  with
a   topology taking into
account proximity for subgroups of finite index in their normalizers 
(see \cite{ratmack, o2q}).  It is essential to have a flexible formal
framework as described in the present paper; this has already been useful in the
construction of  the toral part of the model for an arbitrary
group \cite{AGtoral}. 

Secondly,  the algebra of these models needs to be well understood to
construct and 
work with the Quillen model structures on the categories of differential
graded modules. Thirdly, a detailed understanding of the models is
extremely valuable when dealing with the additional subtleties of modelling change of groups.

Finally, the structures we introduce here will be described in terms of 
diagrams of rings and modules over them, and these same diagrams 
can be viewed as descriptions of rings of functions on algebraic
varieties and subvarieties. At the crudest level, these give an
algebraic geometric view of what we are doing, but more significantly it lets us
sysematically construct objects in $\cA (G)$ from purely
geometric data as in \cite{ellT, derellT} and \cite{cgq}. We plan to return to this
in \cite{flagsGA}.

\subsection{Increasing precision}

As alluded to in Subsection \ref{subsec:history} above, the algebraic
model $\cA (G)$ is based on considering categories of modules over 
diagrams of rings. The diagrams will be rather simple in the sense
that they are functors $\RR : \Sigma \lra \crings$ which have the shape
of a poset $\Sigma$. Although several different posets will be
involved, the functor $\RR$ will be essentially the same throughout.
     The three variables are 
\begin{description}
\item[The poset $\Sigma$ describing the shape of the diagrams] this is
  indicated  by a letter from 
$\{ a,c,d\}$,  corresponding to  {\bf
   a}ll closed subgroups, {\bf c}onnected subgroups or {\bf
   d}imensions of subgroups.
\item[The type of diagram we build from $\Sigma$] this is indicated by
  by a letter  from $\{ s, p, f \}$, corresponding to whether it is  based on
{\bf s}ingle subgroups, {\bf p}airs of subgroups,   or {\bf   f}lags of subgroups. 
\item[The conditions placed on the modules] Here this means a
 binary choice for each of $\{ qc, e, p, cts\}$ namely quasicoherence ($qc$),
 extendedness ($e$),  product decompositions ($p$) or continuity
 ($cts$). 
\end{description}

Not all of these $3\times 3\times 2^4$ combinations are relevant, but
even for a single group $G$ it is clear we need a systematic framework
for discussing them.  The  categories are connected by a web of adjoint pairs of
functors. Since the functor $\RR$ is the same throughout we will indicate the
domain by a subscript ($a,c$ or $d$) and the type of poset by a
superscript ($s,p$ or $f$).  Omitting the conditions on the modules which may be
necessary to define the functors, we will describe adjoint pairs
$$\diagram
\mbox{$\RRa^{p}$-modules} \rto<0.7ex> &
\mbox{$\RRc^{p}$-modules} \rto<0.7ex> \lto<0.7ex>&
\mbox{$\RRc^{f}$-modules} \rto<0.7ex> \lto<0.7ex>&
\mbox{$\RRd^{f}$-modules} \lto<0.7ex>.\\
\enddiagram$$

The point is that the pullback square of ring spectra from \cite{tnqcore}
delivers a coefficient system $\RRd^f$ on the punctured cube of
non-empty subsets of $\{0,1, \ldots , r\}$ so we are committed to
the use of  the right hand end. On the other hand, the essential part of
the structure is the localization theorem, which (when $G$ is a torus) 
delivers a diagram based on pairs of subgroups. The idempotents of the Burnside ring
allow us to separate subgroups with the same identity component, so 
we may represent that information in $\RRc^p$-modules or
$\RRa^p$-modules at the left hand end. 

\subsection{The plan}
For the rest of the paper we will steadily introduce language to give
a general treatment of the structures that concern us. The particular
examples from this motivational section will be introduced properly at the
appropriate point in the discussion. Once the machinery is introduced, 
in Section \ref{sec:AG}  we give details of the comparison of the models of
    rational $G$-spectra, with some results about torsion functors
    adapted from \cite{tnq2} deferred to Section \ref{sec:Gamma}.

\section{Splitting systems and Euler classes}
\subsection{Flags}
We suppose given a countable partially ordered set $\Sigma$. The prime example
is that $\Sigma$ consists of the connected subgroups of a torus $G$,
and the notation is chosen accordingly. The order relation is
$G\supseteq H \supseteq K \supseteq L$ with $G$ denoting the top 
element. We do not want to insist on a bottom element. 
The maximal elements (i.e., $H$ so that $H'\supset H$ implies $H'=G$) will play a special role.  

The motivating examples are as follows. 
\begin{example}
\label{eg:sigma}
(i) The partially ordered set $\Sigma_c=\connsub (G)$ of connected subgroups of a torus $G$
under containment. 

(ii) The poset $\sub (G)$ of all closed subgroups of a compact Lie
group $G$, under containment. In fact this example will not be very
relevant to us, but a certain non-full poset will be. 

(iii) The poset $\Sigma_a=\TC (G)$ of all closed subgroups of a compact Lie group $G$, with
$L \subseteq  K$ if $L$ is normal in $K$ with a torus quotient \cite{ratmack}. We emphasize
that this has many fewer morphisms than the poset with all
inclusions. In the applications it is this toral-chain poset that is
relevant. 

(iv) The set $\Sigma_d=[0,r]:= \{ i \in \Z \st 0\leq i \leq r\}$ with the
usual ordering of integers. 
\end{example}

A sequence of elements 
$$F=(H_0 \supset H_1 \supset \cdots \supset H_s)$$
is called an {\em $s$-flag}, and we write $|F|=s$. We call $H_0=f(F)$ the {\em first} element of $F$ and  $H_s=l(F)$ the
{\em last} element of $F$. It is worth emphasizing that the biggest
element of the flag is first (this is to take notational advantage of standard
bracketing conventions in one of the applications). 

We write
$$\flag_s (\Sigma) =\{ F \st |F|=s\}, $$
and 
$$\flag (\Sigma)=\bigcup_s \flag_s(\Sigma). $$  

We note that for $s\geq 1$ we have  maps $\partial_i: \flag_s(\Sigma)\lra
\flag_{s-1} (\Sigma)$ for $i=0, \ldots , s$ by omitting the $i$th
term.  If we permitted degenerate flags (i.e., containing equalities)  we
would obtain a simplicial set, but instead we simply view $\flag
(\Sigma)$ as poset. 

Finally, we will need to consider various maps of  posets, such as the dimension function
$d: \Sigma \lra I= [0,r]=\{ 0, 1, \ldots , r\}$  with $d(H)=\dim (H)$. 

\subsection{$\Sigma$-diagrams of rings}

\begin{defn} 
(i) A {\em $\Sigma$-splitting diagram} is  a diagram $R: \Sigma^{op}\lra \rings$ of
rings. We may write $R(G/L)$ for the value  at $L$. If
$K\supseteq L$ we write $\infl_{G/K}^{G/L}: R(G/K)\lra R(G/L)$ and call it
{\em inflation} from $G/K$ to $G/L$.

(ii) A {\em system of Euler classes} for a splitting diagram $R$ is a
collection of functors $\cE_{/L}: \Sigma_{\supseteq L}\lra Mult
(R(G/L))$ from elements above $L$ to multiplicatively closed
subsets of $R(G/L)$; the functoriality is the statement that
$\cE_{K/K}=\{ 1\}$ and that  if $L\subseteq K \subseteq H$ then 
$$\cE_{K/L}\subseteq \cE_{H/L}.$$
 These functors are said to be a {\em transitive system} if whenever
 $H\supseteq K \supseteq L$ the multiplicative system $\cE_{H/L}$ is
 generated by $\cE_{K/L}$ and the inflation of the one for $\cE_{H/K}$:
$$\cE_{H/L}=\langle \infl_{G/K}^{G/L}\cE_{H/K} ,\cE_{K/L} \rangle $$
\end{defn}

\begin{remark}
The notation $G/K$ has no meaning in itself. However $R(G/K)$ is 
supposed to suggest that information for objects above $K$ is being captured. 
\end{remark}

\begin{defn}
The  systems of Euler classes we are most concerned with will all be 
 {\em maximally generated } in the following sense. 
For each maximal element $H$ in $\Sigma$ we are given elements 
$e^H_i\in R (G/H)$ for $i \in I(H)$. We then obtain a transitive 
system by defining
$$\cE_{K/L}=\langle \infl_{G/H}^{G/L} (e^H_i)\st L\subseteq H, K \not \subseteq H\rangle .$$
In our main example there is just one element $e^H \in R (G/H)$ for
each maximal $H$. 
\end{defn}

\begin{example}
\label{eg:BorelTorus}
Given a torus $G$ we may let $\Sigma =\sub (G)$ 
and take $d(H)=\dim (H)$.  The most important splitting diagram for us
is the diagram $\RR$ of polynomial rings defined by 
$$\RR (G/L):=H^*(BG/L; \Q). $$
This is the diagram referred to in Section \ref{sec:motivation}, and
the notation $\RR$ will be reserved for this use throughout. 

More generally, given a cohomology theory $E$, we obtain an splitting
diagram $ \bE$ by taking 
$$\bE (G/L):=E^*(BG/L),  $$
and where inflation has its usual meaning. The main example $\RR$ is
the one corresponding to rational ordinary cohomology: $\RR =\mathbb{HQ}$. 

If in addition $E$ is complex orientable, Euler classes $e_G(V)$ are
defined for complex representations $V$ of $G$. These are compatible
with inflation in the sense that if $W$ is a representation of $G/L$
then $e_G(\infl_{G/L}^{G/1} W)=\infl_{G/L}^{G/1} e_{G/L}(W)$, so we may omit the subscript $G$
without confusion. 

Now take
$$\cE_{K/L}=\{ e(V)\st V \mbox{ is a representation of $G/L$ with }
V^{K/L}=0\}. $$
This evidently gives a system of Euler classes and it is  transitive
since
$$e(V)=e(V^{K/L}) e(V/V^{K/L}).$$

Now we note that we have inclusions $\connsub (G) \subset \TC
(G)\subset  \sub (G)$. The poset $\sub (G)$ does not have maximal
elements, but the maximal elements in both $\TC (G)$ and $\connsub
(G)$ are the codimension $1$ subgroups they contain. 

For $\Sigma =\TC (G)$ or for $\Sigma =\connsub (G)$, this example is 
maximally generated; for each maximal subgroup $H$ we
choose one of the two faithful representations of $G/H$ and call it
$\hat{H}$. The  system of Euler classes is maximally generated by
$$e(\hat{H})\in E^*(BG/H)=E^* [[e(\hat{H})]]. $$ 
\end{example}

\begin{example}
As a slight generalization of Example \ref{eg:BorelTorus}, we may suppose 
given any global equivariant theory $E$, and define
$$\bE (G/L) =E^*_{G/L}.  $$
If the cohomology theory is globally complex stable (i.e., all equivariant theories are
complex stable, and the Thom isomorphisms are compatible with
inflation), we define a  system of  Euler
classes as before. Again this is maximally generated by the Euler
classes $e(\hat{H})\in E^*_{G/H} $.

This cohomological example explains the terminology, since one 
says that a cohomology theory is split if there is a ring map
$\infl_{G/G}^{G/1} E \lra E$ which is a non-equivariant
equivalence. Taking fixed points we obtain a map $E^*\lra E_G^*$. For
global  equivariant theories, we have ring maps
$\infl_{G/K}^{G/1}(E_{G/K})\lra E_G$, showing they are split. 
\end{example}


\subsection{Coefficient systems on the flag complex}

Given a splitting system $R$ on $\Sigma$ with Euler classes there is
an associated {\em coefficient system} on  $\flag
(\Sigma) $. When helpful for emphasis, we write $R^s$ for the original
splitting system and $R^f$ for the associated coefficient system.  

 First we note that $\flag (\Sigma)$ is itself a poset
where $E\leq F$ if $E$ is obtained by omitting some terms from $F$. 
We may define the (covariant) functor
$$R^f: \flag (\Sigma )\lra \rings$$
by
$$R^f(F):=\cEi_{H_0/H_1}\cEi_{H_1/H_2}\cdots
\cEi_{H_{s-1}/H_s}R(G/H_s)=\cEi_{H_0/H_s} R(G/H_s),$$
where the second equality uses the fact that the system
is transitive.

We note that $R^f(F)$ is {\em middle-independent} in the sense that the
values are unaffected by omitting middle vertices: 
$$R^f(\partial_iF)=R^f(F) \mbox{ if } 0<i<s. $$
On the other hand, omitting the first element we have a localization map   
$$R^f(\partial_0F)=\cEi_{H_1/H_s} R(G/H_s) \lra \cEi_{H_0/H_s} R(G/H_s)
=R^f(F),$$
and omitting the last element we have an inflation map
$$R^f(\partial_sF)=\cEi_{H_0/H_{s-1}} R(G/H_{s-1}) \lra \cEi_{H_0/H_s}
R(G/H_s) =R^f(F). $$

\begin{remark}
\label{rem:phinotflag}
The splitting system $R^s$ should not be confused with the
coefficient system $R^f$. The notational distinction between $R(G/H)$
(value of  the splitting system at $H$) and $R(H)$ (value of the
coefficient system at the flag $H$ of length 0) should help. 

The point to bear in mind is 
that the coefficient system $R^f$ includes the {\em values} of the splitting system
 as the values on length 0 flags: $R^f(K)=R^s(G/K)$. However the {\em maps} of
 the splitting system  are not included.
If $H\supset K$ there is an inflation map $R(G/H) \lra R(G/K)$, but
in $\flag (\Sigma)$ there is no direct relation between the flag $(H)$ 
and the flag $(K)$. The flag $(H\supset K)$ gives inclusions
$$(H)\lra (H\supset K)\lla (K)$$ and hence ring maps
$$R^f(H)=R^f(\partial_1(H\supset K))\lra R^f(H\supset K)\lla R^f(\partial_0(H\supset K))=R^f(K). $$
In our case these become
$$R(G/H) \lra \cEi_{H/K}R(G/K)\lla R(G/K). $$
\end{remark}

\subsection{Modules over the coefficient system}

Note that the coefficient system  $R$ is a $\flag (\Sigma)$-diagram of
rings, and we may consider modules over it. Explicitly, $M(F)$ is an
$R(F)$-module, and if $E\leq F$ there is a map $M(E)\lra M(F)$ over
the structure map $R(E)\lra R(F)$. 

\begin{defn}
We say that $M$ is a {\em qce-module} if, for all inclusions
$E\subseteq F$, the map  $M(E) \lra M(F)$ induces
an isomorphism 
$$R(F)\tensor_{R(E)}M(E)\stackrel{\cong}\lra M(F). $$
\end{defn}

\begin{remark}
By associativity of the tensor product, the value of a
$qce$-module is
determined by the structure maps and the values on length 0 flags. 

In particular, it is {\em last-determined} in the sense that  for any flag
$F=(L_0\supset \cdots  \supset L_t)$,  we have
$$M(F)=R(F)\tensor_{R(L_t)}M(L_t)=\cEi_{L_0/L_t} M(L_t). $$
In view of the resulting formula we call such last-determined modules {\em
 quasicoherent (qc)}, explaining the $qc$ in the definition.  

It is also {\em first-determined} in the sense that for any flag $F$
as above,  
$$M(F)=R(F)\tensor_{R(L_0)} M(L_0). $$
We call such first-determined modules {\em extended (e)}, explaining the $e$ in the definition.  

Because of middle independence, we only need names for first-determined and
last-determined modules. 
\end{remark}

\begin{remark}
As in Remark \ref{rem:phinotflag},  if $H\supset K$  we have maps
$$M(H)\lra M(H\supset K)\lla M(K)$$
but  there is no direct map $M(H)\lra M(K)$. 
\end{remark}

\begin{remark}
(i) By quasicoherence, the values on single element flags
determine all values and therefore  play a special role. Accordingly, we sometimes use the notation 
$$\phi^KM=M(K)$$
to emphasize this.

(ii) The coefficient system $R$ is a module over itself, and as such
we acquire a third notation: $\phi^KR=R(K)=R(G/K)$. The notations
$\phi^KR$ and $R(G/K)$ both have connotations in the equivariant
setting, and the notation here is consistent with \cite{tnq1, tnq2,
  tnqcore}. Indeed, considering the flag $(G\supset 1)$, we obtain
$$\diagram 
M(1)\rto \dto^= & M(G\supset 1) \dto^{\cong}&M(G)\lto \dto^{\cong}\\
M(1)\rto  & \cEi_G M(1) &\phi^G M \lto.
\enddiagram $$
Thus if $G$ is a torus we may consider the poset $\Sigma_a$ as in
Example \ref{eg:BorelTorus} above. Now if  $X$ is a finite $G$-space and the module $M$ is given by Borel
cohomology of fixed points
$$\phi^KM=H^*_{G/K}(X^K), $$
then $M$ is qce by the Borel-Hsiang-Quillen localization theorem; for example,  
$$\cEi_G H^*_G(X) \cong \cEi_G H^*(BG)\tensor_{\Q} H^*(X^G). $$
The definition was originally designed precisely to capture the Localization
Theorem. 
\end{remark}

\section{The category of pairs}
\label{sec:pf}
Suppose we have a splitting system $R^s$ with Euler classes and consider the associated
coefficient system $R^f: \flag (\Sigma)\lra \rings$. 
In view of the fact that the value $R(F)$ depends only on the first
and last term of the flag $F$, most of the coefficient system is rather
redundant, at least when we consider $qc$-modules or $e$-modules. 

Accordingly we may introduce a more economical category to capture this. 

\begin{defn} 
(i) The category of {\em  pairs} $ \qp
(\Sigma)$, which is a partially ordered set with objects the pairs 
$(K\supseteq L)$. The order is given by  $(K\supseteq L) \leq (H\supseteq M)$ if 
$H\supseteq K\supseteq L\supseteq M$, so that there is a
morphism if we increase the first term and decrease the
last. We will use the letter $p$ to indicate the use of pairs.

(ii) The morphisms are composites of the 
 {\em horizontal} morphisms $h: (K\supseteq L)\lra (H\supseteq L)$
 increasing the first term 
and {\em vertical} morphisms $v: (H \supseteq K ) \lra (H \supseteq
L)$ decreasing the last, where  $H \supseteq K\supseteq L$. 
\end{defn}

\begin{remark}
In the terminology of \cite{tnq2}, $\qp (\Sigma)$ would be called the
category of `quotient pairs' and  $(K\supseteq L)$
would be written $(G/K)_{G/L}$; it embodies the $G/L$-equivariant
information in the $L$-fixed points, namely the part that the
localization theorem says should give the $(G/L)/(K/L)$-equivariant
$K/L$-fixed point information. In any case, the value at $(K\supseteq
L)$ only considers information above $L$, and concentrates on  the part coming from above
$K$. 
\end{remark}

Note that $\qp (\Sigma)$ is not simply related to $\flag
(\Sigma)$ since there are no morphisms between two 2-flags. Nonetheless,
because $R$ is a splitting system with Euler classes, it  does define
a $\qp (\Sigma )$-diagram $R^p$ of rings.  Indeed, 
when $H\supseteq K \supseteq L \supseteq M$ we have a commutative square
$$\diagram
\cEi_{K/L} R(G/L)= 
R(K\supseteq L) \rto \dto &R(H\supseteq L) =\cEi_{H/L} R(G/L) \dto \\
\cEi_{K/L} R(G/K)=  R(K\supseteq M) \rto  &R(H\supseteq M) =\cEi_{H/M} R(G/M) 
\enddiagram$$

\begin{defn}
(i) The category of $R^p$-modules is the category of modules over the
$\qp (\Sigma)$-diagram $R^p$ of rings.

(ii) A module $M$ is {\em quasi-coherent (qc)} if the horizontal maps
are given by extensions of scalars, so that if $H\supseteq K \supseteq
L$ then the horizontal structure map induces an isomorphism 
$$R(H\supseteq L)\tensor_{R(K\supseteq L)} M(K\supseteq L) =
\cEi_{H/L}M(K\supseteq L)\stackrel{\cong}\lra 
M(H\supseteq L) .$$

(ii) A module $M$ is {\em extended (e) } if the vertical maps
are given by extensions of scalars, so that if $H\supseteq K \supseteq
L$ then the vertical structure map induces an isomorphism 
$$R(H\supseteq L)\tensor_{R(H\supseteq K)} M(H\supseteq K)
\stackrel{\cong}\lra   M(H\supseteq L) .$$
\end{defn}

We may define a functor 
$$f: \mbox{$R^p$-modules}\lra  \mbox{$R^f$-modules}$$
by selecting just the first and last term of the flag:
$$(fN) (F):=N(f(F)\supseteq l(F)).  $$
For the structure maps we assume $E$ is a subflag of $F=(L_0\supset
\cdots \supset L_t)$ obtained by omitting the single term  $L_i$. 
If $0<i <t$ the structure map is the identity. If
$i=t$ we use the vertical morphism and if $i=0$ we use the horizontal
morphism. 

\begin{lemma}
\label{lem:QPisflag}
The functor $f$ identifies $R^p$-modules as the $R^f$-modules  which are middle-independent in the sense that
inclusions of flags $E \lra F$ induce  isomorphisms if $E $ and $F$
have the same first and last entry.

In particular, it induces equivalences on the subcategories of $e$,
$qc$ and $qce$ modules: 
$$\mbox{$e$-$R^p$-modules}\simeq  \mbox{$e$-$R^f$-modules}$$
$$\mbox{$qc$-$R^p$-modules}\simeq  \mbox{$qc$-$R^f$-modules}$$
$$\mbox{$qce$-$R^p$-modules}\simeq  \mbox{$qce$-$R^f$-modules}$$
\end{lemma}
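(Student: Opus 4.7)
The plan is to exhibit $f$ as pullback along a monotone map of posets and then build an explicit inverse on middle-independent modules. Define $\pi : \flag(\Sigma) \to \qp(\Sigma)$ by $\pi(F) = (f(F) \supseteq l(F))$; this is monotone since if $E \leq F$ as flags then $f(F) \supseteq f(E) \supseteq l(E) \supseteq l(F)$, which is the relation $\pi(E) \leq \pi(F)$ in $\qp(\Sigma)$. Because $R^f(F)$ depends only on $f(F)$ and $l(F)$, one sees $R^f = \pi^* R^p$, so $f = \pi^*$ on modules. Whenever $E \leq F$ share first and last entries, $\pi(E) = \pi(F)$ and the structure map $(fN)(E) \to (fN)(F)$ is the identity, so $fN$ is always middle-independent.

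For the inverse $g$, given a middle-independent $M$, I would set $(gM)(L \supseteq L) = M((L))$ on the $0$-flag and $(gM)(K \supseteq L) = M((K \supset L))$ on the $1$-flag when $K \supsetneq L$. Structure maps are assembled via bridging flags. For a horizontal $(K \supseteq L) \to (H \supseteq L)$ with $H \supsetneq K \supsetneq L$, use the $2$-flag $B = (H \supset K \supset L)$: since $(K \supset L) = \partial_0 B$ and $(H \supset L) = \partial_1 B$, and middle-independence makes $M((H \supset L)) \to M(B)$ an isomorphism, inverting it yields $M((K \supset L)) \to M(B) \cong M((H \supset L))$. Edge cases ($K = L$ or $H = K$) reduce to a direct subflag structure map or the identity. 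Verticals are defined symmetrically through the same $B$ with $(H \supset K) = \partial_2 B$.

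The main obstacle is verifying the functoriality of $g$: two composable morphisms in $\qp(\Sigma)$ must produce zigzag structure maps whose composite equals the zigzag assigned to the composite morphism. The strategy is to embed all the participating $2$-flag bridges and the bridge for the composite into a common $3$- or $4$-flag; middle-independence then collapses every implicit isomorphism onto a single value, from which the required equality can be read off by chasing the diagram. Once $g$ is functorial, $fg \simeq \mathrm{id}$ follows from $fg(M)(F) = M(\pi(F)) \cong M(F)$ supplied by middle-independence, and $gf \simeq \mathrm{id}$ is immediate on objects with structure maps matching by construction.

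For the condition-preservation claim, observe that a horizontal morphism in $\qp(\Sigma)$ corresponds under $f$ to the structure map from a length-$0$ or length-$1$ flag into a flag with the same last entry, so $N$ is $qc$-$R^p$ exactly when these become extensions of scalars; combined with middle-independence, this is the full last-determinedness ($qc$) condition for $fN$. Symmetrically, $N$ is $e$-$R^p$ iff $fN$ is first-determined, and $qce$ iff both hold. All three correspondences are manifestly reversible through $g$, yielding the claimed equivalences of subcategories.
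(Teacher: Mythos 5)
Your proof is correct and takes essentially the same route as the paper: construct the inverse on middle-independent modules via the two-step zigzag through the two-flag $(H\supset K\supset L)$, inverting the middle-independence isomorphism $M(H\supset L)\to M(H\supset K\supset L)$, verify functoriality by passing to longer flags containing all the terms, and observe that the $qc$ and $e$ conditions force middle-independence and correspond under the equivalence. Your framing of $f$ as pullback $\pi^*$ along $\pi:\flag(\Sigma)\to\qp(\Sigma)$ is a nice conceptual gloss on the paper's formula $(fN)(F)=N(f(F)\supseteq l(F))$, but the underlying argument is the same.
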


\begin{proof}
One may define a functor in the opposite direction
$$p: \mbox{middle-independent-$R^f$-modules}\lra 
\mbox{$R^p$-modules}$$ 
on the category of middle-independent
modules. On objects, we simply take $(pM)(K\supseteq L) :=M(K\supseteq
L)$. For $H\supseteq K \supseteq L$ the horizontal and vertical
morphisms are obtained from 
$$M(K\supseteq L)\lra M(H\supseteq K \supseteq L)\stackrel{\cong}\lla M(H\supseteq
L)$$
and 
$$M(H\supseteq K)\lra M(H\supseteq K \supseteq L)\stackrel{\cong}\lla M(H\supseteq L)$$
by inverting the second map. To see this respects compositions, we
compare to higher flags involving all objects involved in the
composition. It is clear that $f$ and $p$ are quasi-inverse. 

Quasi-coherent $R^f$-modules are last-determined in the sense
of the formula $M(F)=R(F)\tensor_{R(l(F))}M(l(F))$; since $R(F)$
is middle-independent,  the quasi-coherent modules are middle-independent. 
Similarly, extended $R^f$-modules are first-determined
and a dual argument applies. 
\end{proof}


\section{Multiplicities}
On some occasions we want to artificially increase the size of our
poset $\Sigma$, constructing a new poset $\Sigmat$ in rather a trivial
way. We will use this to bring the rings occurring in our coefficient
systems under control. 


\begin{defn}
A {\em system of multiplicities}  is a covariant
functor $\cF/: \Sigma \lra \sets$ so that if $i: L\subset K$ then
$i_*: \cF/L \lra \cF/K$ is surjective. We also require that $\cF /G$
is a singleton (also denoted $G$). 
\end{defn}

\begin{example}
(i) If $\Sigma =\connsub (G)$ there is a system of multiplicities given
by  specifying the set of subgroups
$$\cF /K := \{ \tK \st \mbox{ the identity component of $\tK$ is $K$
}\}.$$
If $i: L\subset K $ then the map $i_*:\cF /L \lra \cF /K$ is given by
$i_*(\tL):=\tL \cdot K$. Note that $i_*(\tL)=\tL
\cdot K$ has identity component $K$ and it has $\tL$ as a cotoral subgroup;
it is the unique subgroup with these two properties. To see the map is
surjective, note that  any subgroup $\tK$ is an
internal direct product of $K$ and a finite group $F$, and so $\tK =i_*(L.F)$. 

(ii)  A surjective map of posets $q:\Sigmat\lra \Sigma$ has fibres
$\cF /K=q^{-1}(K)$, but these do not generally form a system of
multiplicities. If we require that the elements of $\cF /K$ are
incomparable for each $K$ then the condition is that given $K\supset
L$, and $\tL$ with $q(\tL)=L$, then there is a unique $\tK \supset \tL$ with
$q(\tK)=K$, and we write $i_*(\tL)=\tK$. (This is a very degenerate case of the
requirement  that $q$ is a Grothendieck opfibration with
cleavage). This defines a functor $\cF / : \Sigma \lra \sets$, and we
require in addition that the  morphisms are surjective.  
\end{example}

Given a poset $\Sigma$ and a system of multiplicities $\cF /$ we may
form a new poset $\Sigmat =\Sigma \cF$ with a surjective poset map $q:\Sigma
\cF \lra \Sigma$ preserving the top and maximal elements. Its objects are pairs $(K, \tK)$ where
$K\in \Sigma$ and $\tK \in \cF /K$. The order relation is given by 
$(L, \tL) \subset (K, \tK )$ if (a) $L\subset K$ and (b) $i_*\tL
=\tK$. Where $K$ can be inferred from $\tK$ (as in the subgroup example),  we may abbreviate
$(K,\tK)$ to $\tK$. Note in particular that for a specified  $K$, the elements
of $\cF /K$ are incomparable. 

We note that this gives an alternative approach to a familiar
example. 

\begin{example}
If we take $\Sigma_c =\connsub(G)$ and $\cF$ to be the system of
subgroups with a given identity component, we recover the toral chain poset:
$$\Sigma_a=\TC (G) =\connsub (G) \cF =\Sigma_c\cF.$$
\end{example}

\subsection{Splitting systems with multiplicities}
Given a splitting  system $R$ and a system of multiplicities $\cF/$,
we may introduce mutiplicities into $R$. 

First we note that any map $q: \Sigmat \lra \Sigma$ lets us define a
$\Sigmat$-splitting system $q^*R$ by $(q^*R)(\tK)=R(q(\tK))$. We may
apply this to $\Sigmat=\Sigma \cF$ and 
 the map $q: \Sigma \cF \lra \Sigma$ defined by $q(K, \tK ) =K$ to 
obtain a $\Sigma \cF$ splitting system by taking 
$$R(G/(K,\tK)):=R (G/K), $$
and using the original inflation maps as structure maps.

We may define a new $\Sigma$-splitting system $R\cF $ by taking
products over the fibres of $q$. Explicitly, we take
$$R\cF (G/K) =(R(G/K))^{\cF/K}. $$
If $i:L\subseteq K$ the inflation map 
$$(R\cF)(G/K) =(R(G/K))^{\cF/K} \lra (R(G/L))^{\cF/L}
=(R\cF) (G/L)$$
is defined as a product of the diagonal inflation maps. To explain,
the map is a product indexed by $\cF /K$. The 
 factor corresponding to $\tK \in \cF /K$ is the map 
$$(R (G/K))^{\{\tK \}}\lra (R(G/L))^{i_*^{-1} (\tK )} $$
whose components are all inflation. This is where we use the surjectivity in
the system of multiplicities. 

\begin{remark}
It is natural to use the notation $(R\Sigma )^s
=q_!q^*R^s$, and we will justify this in due course. However, some
care is necessary, since  the two coefficient systems $(q_!R^s)^f$ and $q_!(R^f)$ are
usually different.
\end{remark}

\subsection{Euler classes on $R\cF$}

We note that once we define a set of Euler classes, 
the splitting system $R\cF$ gives rise to a $\flag (\Sigma)$-coefficient
system $(R\cF)^f$. All such  coefficient systems take the value  $\prod_{\tK \in
  \cF  /K}R(G/K)$ at $K$, but the values elsewhere will depend on
Euler classes. 

To define Euler classes it is natural to assume $q$
takes maximal elements to maximal elements, and use a suitable induced
system of maximally generated Euler classes. We illustrate this in the
topological examples of \cite{tnq1, tnq2, tnqcore}.
At present, there are several candidate constructions corresponding to that for the sphere. 
The purpose of the present subsection is to make these explicit, explain their differences and
identify the topologically relevant one.

\begin{example}
\label{eg:RcRcb}
We consider various examples with $\Sigma_a=\TC (G)$ and 
$\Sigma_c=\connsub (G)$. We have maps of posets 
$$\Sigma_c \stackrel{i}\lra \Sigma_a \stackrel{q}\lra \Sigma_c, $$
so that $\Sigma_c$ is a retract of $\Sigma_a$ and
$\Sigma_a=\Sigma_c\cF$. 

We start with the ordinary Borel splitting system $\RR$ of Example
\ref{eg:BorelTorus}, now introducing decorations so we can introduce
the diagrams from Section \ref{sec:motivation}. To start with, we have
the basic splitting system  
$$\RRa^s (G/K)=H^*(BG/K) $$
on $\Sigma_a$. From this we form $\RRc^s=q_!\RRa^s$ so that 
$$\RRc (G/K) =\prod_{\tK \in \cF /K}H^*(BG/\tK) =\cO_{\cF /K}$$
(where the notation $\cO_{\cF /K} $ is that used in \cite{tnq1, tnq2, tnqcore}). 

We note that we could also  form $i^*\RRa^s$ on $\Sigma_c$ and
introduce multiplicities to form $\RRcb^s=(i^*\RRa^s)\cF$,
where we have
$$\RRcb^s (G/K) =(i^*\RRa^s) \cF (G/K)=\prod_{\tK \in \cF /K}H^*(BG/K).$$

We note that since we are working over the rationals, termwise inflation gives an isomorphism 
$$\RRc^s \stackrel{\cong}\lra \RRcb^s$$
of splitting systems. 
\end{example}

We now consider several choices of maximally generated Euler classes. 

\begin{example}
\label{eg:RRc}
On a codimension 1 connected subgroup $H$, the canonical example $\RRc$
has the value 
$$\RRc (G/H)=\prod_{\tH \in \cF /H} H^*(BG/\tH)=\cO_{\cF /H} .$$
It has Euler classes $c(\alpha)(\tH)=c_1(\alpha^{\tH})$, where 
$\alpha$ runs through all non-trivial one dimensional representations of $G$.
In particular, if $\alpha$ is a faithful representation of $G/H$ then 
$$c(\alpha^n)=\dichotomy{nc_1(\alpha) \mbox{ if } |\tH /H| \;|\: n}
{1\hspace*{4ex} \mbox{ if } |\tH /H| \;\not |\: n}. $$
This is different to the diagram used in the construction onf $\cA_c^f (G)$.
\end{example}

\begin{example}
\label{eg:rrcfisrrcbf}
Now consider $\RRcb$;  on a codimension 1 connected
subgroup $H$ it has the value 
$$\RRcb (G/H)=\prod_{\tH \in \cF /H} H^*(BG/H)$$
and we need to consider how to define Euler classes $c(\alpha ) \in \RRcb (G/H)$.

(i) Diagonal maps give  a map of $\Sigma_c$-splitting systems $i^*\RRa \lra
(i^*\RRa)\cF$. We may use the system of Euler classes from $\RRa $ to give a
system on $(i^*\RRa) \cF$. 

This would mean that we use generating Euler classes defined by 
$c(\alpha) (\tH)=c_1(\alpha^H)$, independent of $\tH$. 
In particular, if $\alpha$ is a faithful representation of $G/H$ then 
$c(\alpha^n)=nc(\alpha)$. In this case we would only need to  use Euler
classes of characters with connected kernel. 

(ii) If we forget the diagonal is available, for each 
$\tH \in \cF /H$ we have an Euler class $c(\alpha)_{\tH}$ whose value
at $\tH'$ is 1 if $\tH'\neq \tH$ and is $c_1(\alpha^H)$ if
$\tH'=\tH$. 

Under the isomorphism $\RRc \cong \RRcb$ described in Example \ref{eg:RcRcb}, 
 this second collection of Euler classes gives the same localization as the
 natural Euler classes of $\RRc$ so we have an isomorphism of
 coefficient systems
$$\RRc^f \cong \RRcb^f. $$ 
Thus we have two slightly different approaches to the diagram of rings
used in constructing $\cA^f_c(G)$.
\end{example}


\section{Change of poset}
\label{sec:changeposetI}
In organizing the information in categories of modules, there is a
balance between the information put into the poset $\Sigma$ and the
information put into the rings. We aim to show that on the categories of
modules of interest to us, we can move between these easily. However
there are a number of different functors that will all be
important. In this section we give an overview. It is easy to break
apart modules over product rings with idempotents, giving a functorial construction
$e$. Depending on the domain and codomain categories, the functor $e$ has
a number of left and right adjoints. In this section we construct the
most obvious adjoint to $e$. In Sections
\ref{sec:pistar} to \ref{sec:Euleradaptedpairs} we construct  other
functors and establish adjunctions that let us work with them.

\subsection{Change of poset for coefficient systems}
\label{sec:pshriekone}
We  start with a surjective poset map $\pi: \Sigma \lra \Sigmab$ which
takes the top element $G$ of $\Sigma$ to the top element $\Gb$ of
$\Sigmab$, and  also takes the set of  maximal elements of $\Sigma$ onto the set of
maximal elements of $\Sigmab$. 

\begin{example}
(i) One example of importance is when we have a dimension function $d: \Sigma \lra I $
where $I=[0,r]=\{ 0, 1, \ldots , r\}$.

(ii) A second example is the map $q: \Sigmab \overline{\cF}\lra
\Sigmab$ arising from a system of multiplicites $\overline{\cF}$ on
$\Sigmab$. This example has special features.
\end{example}

We first note that given a splitting system $\Rb$ on $\Sigmab$ we may
define a splitting system $\pi^*\Rb$ on $\Sigma $ by 
$$(\pi^*\Rb)(K)=\Rb(\pi K).$$
One naturally expects a  right adjoint to this construction to be
given on objects by the formula
$$(\pi_!R)(\Kb)=\prod_{\pi K=\Kb}R(K), $$
but $\pi$ needs to satisfy additional properties before we may define structure
maps. Fortunately, $\pi$ induces a map on flags, and it is
straightforward to observe this has the property we require. 

\begin{lemma}
\label{lem:cleavage}
The map $\pi:\flag (\Sigma)\lra \flag (\Sigmab)$  is a Grothendieck
fibration with cleavage in the sense that given an inclusion  $\Eb\lra \Fb$ of
$\Sigmab$-flags and $F$ with $\pi F=\Fb$, there is a unique $\Sigma$-subflag
$E$ of $F$ with $\pi E=\Eb$. \qqed
\end{lemma}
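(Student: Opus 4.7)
The plan is to describe flags and their subflag inclusions in terms of index sets and then show that the cleavage is forced by strict decreasingness.

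Write $F = (H_0 \supset H_1 \supset \cdots \supset H_s)$ and $\Fb = (\Kb_0 \supset \Kb_1 \supset \cdots \supset \Kb_s)$ with $\pi H_j = \Kb_j$. Any inclusion $\Eb \leq \Fb$ in $\flag(\Sigmab)$ is by definition obtained by omitting some terms, so it is determined by a unique subset $\Ib = \{i_0 < i_1 < \cdots < i_t\} \subseteq \{0,1,\ldots,s\}$ via $\Eb = (\Kb_{i_0} \supset \cdots \supset \Kb_{i_t})$. Likewise, any $\Sigma$-subflag $E$ of $F$ is determined by a subset $J \subseteq \{0,1,\ldots,s\}$.

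For existence, take $E$ to be the subflag of $F$ indexed by the same set $\Ib$, i.e., $E = (H_{i_0} \supset \cdots \supset H_{i_t})$. Applying $\pi$ termwise recovers $\Eb$. For uniqueness, suppose $E'$ is another subflag of $F$ with $\pi E' = \Eb$, corresponding to indices $J = \{j_0 < \cdots < j_t\}$. Then the equality $\pi E' = \Eb$ forces $\Kb_{j_l} = \Kb_{i_l}$ for each $l$. Since $\Fb$ is a genuine (strict) flag, the elements $\Kb_0, \ldots, \Kb_s$ are pairwise distinct, so $j_l = i_l$ and hence $E' = E$.

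There is no real obstacle here; the only subtle point is that the statement implicitly restricts attention to $F$ for which $\pi$ applied termwise lands in $\flag(\Sigmab)$ without repetitions, since otherwise $\pi F = \Fb$ would not make sense as an equality of strict flags. Granted that, the strictness of $\Fb$ is exactly what pins down the lift uniquely, and so gives the cleavage.
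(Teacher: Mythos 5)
The paper gives no proof of this lemma; it is stated with a \qed and treated as immediate. Your argument --- identify subflag inclusions with index subsets, take the subflag of $F$ indexed by the same subset to get existence, and invoke the strictness of $\Fb$ (its terms are pairwise distinct) for uniqueness --- is exactly the routine verification the paper has in mind, and it is correct.

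Your closing caveat is a good one and worth flagging: for $\pi F$ to be a well-defined strict flag one needs $\pi$ to be strictly order-preserving, i.e., to take strict inclusions to strict inclusions. The paper's two running examples both satisfy this (the dimension function $d:\connsub(G)\to[0,r]$, since a proper inclusion of connected subgroups strictly drops dimension; and the multiplicity map $q:\Sigma\cF\to\Sigma$, since $(L,\tL)\subset(K,\tK)$ requires $L\subsetneq K$ by construction), so the hypothesis is satisfied in practice even though it is not spelled out in the general statement. Had one instead defined $\pi F$ by deduplicating repeated terms, uniqueness of the lift would genuinely fail, so your restriction is essential rather than cosmetic.
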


In this
section we deal with the general framework, and in Section
\ref{sec:Euleradapted} we look at the Euler adapted context which is 
more directly relevant. 

\begin{defn}
Given a surjective map $\pi: \Sigma\lra \Sigmab$ and a coefficient system
$R$ on $\flag (\Sigma)$ we may define a coefficient system $\pi_!R$ on
$\flag (\Sigmab)$ on flags by 
$$(\pi_!R)(\Fb)=\prod_{\pi F=\Fb}R(F). $$
Given a map $\Eb\lra \Fb$ of flags, the map 
$$(\pi_!R)(\Eb)=\prod_{\pi E=\Eb}R(E)\lra \prod_{\pi F=\Fb}R(F)=(\pi_!R)(\Fb)$$
is a product indexed by $E$ with $\pi E=\Eb$ of the maps
$$R(E) \lra \prod_{F\supset E, \pi F=\Fb}R(F)$$
with components coming from the structure maps of $R$. 
\end{defn}

\subsection{Flag idempotents}
\label{subsec:flagidempotents}
First we describe how we may obtain $R$-modules from $\pi_!R$-modules. 

Recall that $lF$ means the last (or smallest) term in the flag $F$.
The key is to note that there is a canonical choice of idempotent
$$e_F\in  \prod_{\pi F=\Fb}R(lF), $$
and  if $E$ is a subflag of $F$ then $e_F$ is a refinement of the image of
$e_E$ in $\prod_{\pi F=\Fb}R(lF)$. This gives compatible idempotents for
all systems of Euler classes. 

\begin{lemma}
If $E$ is a subflag of $F$ with $\pi E=\Eb, \pi F=\Fb$ then 
$e_F(\pi_!R)(\Fb)=R(F)$ and the map 
$$R(E)=e_E(\pi_!R)(\Eb)\lra e_F(\pi_!R)(\Fb)=R(F)$$
coincides with the original structure map of $R$.\qqed  
\end{lemma}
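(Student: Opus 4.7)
The plan is to unpack the relevant definitions and then observe that everything decomposes along the product structure of $\pi_!R$, with Lemma \ref{lem:cleavage} (the cleavage property) ensuring the bookkeeping matches.

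First I would make explicit the idempotent $e_F$. By definition $e_F \in \prod_{\pi F' = \Fb} R(lF')$ is the element whose $F'$-component is $1 \in R(lF')$ when $F' = F$ and $0$ otherwise. Since each value $R(F')$ of the coefficient system is an $R(lF')$-algebra (recall $R(F') = \cEi_{f(F')/l(F')} R(G/l(F'))$), the product $\prod_{\pi F' = \Fb} R(lF')$ acts componentwise on $(\pi_!R)(\Fb) = \prod_{\pi F' = \Fb} R(F')$. Multiplication by $e_F$ therefore annihilates every factor except the $F$-factor, where it acts as the identity, giving
$$e_F (\pi_!R)(\Fb) = R(F).$$
This handles the first assertion.

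For the second assertion, I would unwind the structure map $(\pi_!R)(\Eb) \to (\pi_!R)(\Fb)$. By construction it is a product, over $E'$ with $\pi E' = \Eb$, of maps with $F'$-components the structure maps $R(E') \to R(F')$ of $R$ (for each $F' \supset E'$ with $\pi F' = \Fb$). Precomposing with the inclusion of the $E$-factor and postcomposing with projection to the $F$-factor (i.e.\ multiplying by $e_E$ and $e_F$) leaves precisely the structure map $R(E) \to R(F)$ corresponding to the inclusion $E \subseteq F$. Here is where Lemma \ref{lem:cleavage} is doing the key work: given $F$ over $\Fb$, there is a \emph{unique} subflag $E'$ of $F$ with $\pi E' = \Eb$, and since $E$ is assumed to be such a subflag, we must have $E = E'$. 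So no other $(E', F)$-pair contributes to the $F$-component, and no ambiguity remains.

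The only point requiring any care is verifying that multiplication by $e_F$ really does cut out the required direct factor once we pass from the product of base rings $R(lF')$ to the product of localizations $R(F')$; this reduces to the observation that the structure ring maps $R(lF') \to R(F')$ send $1$ to $1$, so the decomposition into factors is preserved. I do not anticipate a substantive obstacle: the proof is essentially a repackaging of the product definition of $\pi_!R$, with Lemma \ref{lem:cleavage} guaranteeing that the indexing behaves as expected.
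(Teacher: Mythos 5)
Your proof is correct and is precisely the unpacking the paper has in mind; the paper declares the lemma immediate (the statement carries a \qqed with no proof given), and your argument fills in exactly the bookkeeping that makes it so. The two things you isolate as the substance of the matter — (a) the idempotent $e_F$ picks out the $F$-factor of the product because each $R(lF')\to R(F')$ is a ring map carrying $1$ to $1$, and (b) Lemma \ref{lem:cleavage} guarantees that only the unique subflag $E$ of $F$ lying over $\Eb$ contributes to the $F$-component of the structure map — are indeed the only two points that need saying.
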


\begin{lemma}
\label{lem:defe}
Applying idempotents gives  a functor 
$$e: \mbox{$\pi_!R^f$-modules}\lra \mbox{$R^f$-modules}.$$
defined by 
$$(e\Mb )(F) =e_F\left[ \Mb (\pi F) \right],   $$
where $\Mb$ is a $\pi_!R^f$-module  and  $e_F\in R(\pi F)$ is the idempotent corresponding to $F$.   
\end{lemma}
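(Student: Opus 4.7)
The plan is to verify three things: each proposed value is a module over the correct ring; subflag inclusions induce structure maps over the given ring structure maps; and these assignments are functorial. Functoriality in the $\pi_!R^f$-module $\Mb$ is manifest from the formula $(e\Mb)(F) = e_F\Mb(\pi F)$, so the real content concerns a single fixed $\Mb$.

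First I would verify that $(e\Mb)(F)$ is canonically an $R^f(F)$-module. Here $\Mb(\pi F)$ is a module over $(\pi_! R^f)(\pi F) = \prod_{\pi F' = \pi F} R^f(F')$, and $e_F$ is a central idempotent in this ring; the preceding lemma identifies $e_F\cdot (\pi_!R^f)(\pi F)$ with $R^f(F)$, so the summand $e_F\Mb(\pi F)$ inherits an $R^f(F)$-module structure.

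Next, given a subflag inclusion $E \subset F$, I would set $\Eb = \pi E$ and $\Fb = \pi F$, note $\Eb \subseteq \Fb$ since $\pi$ is order-preserving, and define $(e\Mb)(E) \to (e\Mb)(F)$ as the composite of the inclusion $e_E\Mb(\Eb) \hookrightarrow \Mb(\Eb)$, the $\Mb$-structure map $\Mb(\Eb) \to \Mb(\Fb)$, and multiplication by $e_F$. The essential point is $R^f(E)$-linearity over $R^f(E) \to R^f(F)$. By centrality of $e_F$ it suffices to check that for $r \in R^f(E) = e_E(\pi_!R^f)(\Eb)$, the element $e_F$ times the image of $r$ under the $\pi_!R^f$-structure map coincides with the image of $r$ under the original $R^f$-structure map $R^f(E) \to R^f(F)$; this is exactly what the preceding lemma supplies, since $e_F$ is a refinement of the image of $e_E$.

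Finally, for functoriality along a composable chain $E \subset F \subset G$, the refinement relation for $e_G$ with respect to $e_F$ (after applying the appropriate $\pi_!R^f$-structure map) collapses the two-step composite to the one-step structure map, using functoriality of $\Mb$ to combine the two $\Mb$-structure maps. The main obstacle, and essentially the only nonformal ingredient, is the interplay of the idempotents $e_F$ with the structure maps of $\pi_!R^f$; the preceding lemma is precisely tailored to dissolve it, so the argument should otherwise be a matter of unwinding definitions.
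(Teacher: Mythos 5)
Your argument is correct and follows essentially the same route as the paper: the structure map for $E\subseteq F$ is built from the $\Mb$-structure map $\Mb(\pi E)\to\Mb(\pi F)$ followed by multiplication by $e_F$, with the earlier unnamed lemma (identifying $e_F(\pi_!R)(\Fb)=R(F)$ and matching the induced map $e_E(\pi_!R)(\Eb)\to e_F(\pi_!R)(\Fb)$ with the $R$-structure map) supplying the linearity and compatibility. You merely make explicit the module-structure and functoriality checks that the paper dispatches with a one-line remark.
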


\begin{proof}
First, we need to describe the structure maps associated to an
inclusion $E\lra F$ of flags. We have an inclusion $\pi E\lra \pi F$ giving 
$\Mb (\pi E)\lra \Mb (\pi F)$. Since the idempotent the image of $e_E$ in $R(\pi F)$
refines $e_F$ we have an induced map
$$(e\Mb )(E)=e_E\Mb (\pi E)\lra e_E\Mb (\pi F)\lra e_F\Mb (\pi
F)=(e\Mb )(F). $$
These are compatible with the module structure. 
\end{proof}

\subsection{The various adjoints}
This subsection is designed as a guide to the following sections where
a number of different adjoints to $e$ are described. The point is that
the functor $e$ can be viewed as a functor between several different
pairs of categories, and in each case it may have left or right
adjoints. 

We start by assuming that the $\flag (\Sigma)$-diagram of rings $R$ is
given, and we have formed $\flag (\Sigmab)$-diagram $\pi_!R$. 

\begin{itemize}
\item The functor $e:\mbox{$\pi_!R$-modules}\lra \mbox{$R$-modules}$
  has a right adjoint $\pi_!$ consistent with the notation $\pi_!R$
  for coefficient systems (see Subsection \ref{subsec:pishriek}).
\item Given a $\flag(\Sigmab)$ diagram $\pi_!'R $ with a map $\pi_!'R
 \lra \pi_!R$ inducing an isomorphism $e\pi_!'R\cong e\pi_!R$, the functor
$e:\mbox{$\pi_!'R$-modules}\lra \mbox{$R$-modules}$ has a left adjoint
$\pi_*$ (see Section \ref{sec:pistar}). 
\item If $R$ has a system of maximally generated Euler classes, there
 is a $\Sigmab$-diagram $\pi_!^eR$ of rings with a map $\pi_!^e R\lra
 \pi_!R$ which induces an isomorphism   $e\pi_!^eR\cong e\pi_!R$. The functor 
$e:\mbox{$iqc$-$\pi_!^eR$-modules}\lra \mbox{$\pi$-cts-$qc$-$R$-modules}$ has a right adjoint
$\pi_!^e$, where $iqc$ modules are those $M$ for which $eM$ is $qc$, and
where $\pi$ continuity is a notion to be defined below (see Section
\ref{sec:Euleradapted}). 
\item A version of the previous right adjoint with flags replaced by
  pairs (see Section
\ref{sec:Euleradaptedpairs}). 
\end{itemize}

We attempt to use notation that suggests the category of origin. For
example,  $M$ is a module based on a $\Sigma$-diagram of rings, $\Mb$ is 
a module based on a $\Sigmab$-diagram of rings.

\subsection{Modules over $R$ and $\pi_!R$}
\label{subsec:pishriek}
To obtain $\pi_!R$ modules from $R$-modules, we extend the functor $\pi_!$ to modules.

\begin{defn}
\label{defn:pishriek}

(i) For a module $M$ over $R$ we take 
$$(\pi_!M)(\Fb)=\prod_{\pi F=\Fb} M(F)$$
with structure maps given by Lemma \ref{lem:cleavage} as for $\pi_!R$. 

(ii) A $\flag (\Sigmab)$-$\pi_!R$-module $\Mb$ is said to be a {\em
 $p$-module} (or {\em product module}) if  the natural map
$$\Mb(\Fb)\lra \prod_{\pi F=\Fb}e_F\Mb (\Fb)$$
is an isomorphism for all flags $\Fb$. 
\end{defn}

These constructions give the relationship we need between $\flag
(\Sigma)$-$R$-modules and $\flag (\Sigmab)$-$\pi_!R$-modules. 

\begin{lemma}
The constructions $e$ and $\pi_!$ above give an adjunction
$$\adjunction{e}
{\mbox{$\pi_!R^f$-modules}}
{\mbox{$R^f$-modules}}
{\pi_!}. $$

We find $e\pi_!=1$ and the adjunction gives an equivalence
$$\mbox{$p$-$\flag (\Sigmab)$-$\pi_!R$-modules}\simeq \mbox{$\flag (\Sigma)$-$R$-modules}.\qqed $$
\end{lemma}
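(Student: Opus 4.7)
The plan is to construct the unit and counit of the adjunction explicitly, verify $e\pi_! = \mathrm{id}$ directly from the idempotent calculus, and then deduce the equivalence on $p$-modules by noting which direction of unit/counit is always an isomorphism.

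First I would define the counit $\varepsilon : e\pi_! M \to M$ and the unit $\eta : \Mb \to \pi_! e \Mb$. For the counit, at a flag $F$ with $\pi F = \Fb$ we have
\[
(e\pi_! M)(F) = e_F\!\left[\,\prod_{\pi F' = \Fb} M(F')\right],
\]
and because $e_F$ is precisely the idempotent cutting out the $F$-component of the product, this is canonically identified with $M(F)$. Checking compatibility with the structure maps is routine: for a subflag inclusion $E \hookrightarrow F$ the structure map of $\pi_! M$ at the level of components comes from the structure map of $M$, and the idempotents $e_E, e_F$ interact exactly as in Subsection \ref{subsec:flagidempotents}. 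This gives $\varepsilon$ and simultaneously shows $e\pi_! M \cong M$ naturally in $M$.

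For the unit, given a $\pi_! R$-module $\Mb$, I would define
\[
\eta_{\Mb}(\Fb) : \Mb(\Fb) \longrightarrow (\pi_! e \Mb)(\Fb) = \prod_{\pi F = \Fb} e_F \Mb(\Fb),
\]
as the tuple of idempotent projections. The triangle identities then follow directly: composing the unit at $\pi_! M$ with $\pi_!$ applied to the counit reduces to the observation that the product of projections onto $e_F M(F) = M(F)$ reconstructs the original tuple. Thus $(e, \pi_!)$ is an adjoint pair and the counit is an isomorphism, so $\pi_!$ is fully faithful.

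The last step is to identify the essential image of $\pi_!$ precisely as the $p$-modules. One direction is immediate from the definition: $\pi_! M$ is visibly a product module. In the other direction, the unit $\eta_{\Mb}$ at a $p$-module is, by Definition \ref{defn:pishriek}(ii), an isomorphism at every flag, so $\Mb \cong \pi_! e \Mb$. Hence the adjunction restricts to the asserted equivalence
\[
\mbox{$p$-$\flag(\Sigmab)$-$\pi_! R$-modules} \;\simeq\; \mbox{$\flag(\Sigma)$-$R$-modules}.
\]
The only mild obstacle I anticipate is bookkeeping with the structure maps for $\pi_!$ along a flag inclusion $\Eb \hookrightarrow \Fb$: one must use the cleavage of Lemma \ref{lem:cleavage} to match components on the two sides, and confirm that idempotents behave coherently under refinement, but this is exactly what the preceding two subsections were set up to ensure.
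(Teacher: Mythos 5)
Your proof is correct and is exactly the one the paper leaves implicit (the lemma is stated with an immediate \qqed). The verification that $e\pi_! = 1$ via the idempotent $e_F$ picking out the $F$-component, the unit as the tuple of idempotent projections whose being an isomorphism is precisely the $p$-module condition of Definition \ref{defn:pishriek}(ii), and the triangle identities via the cleavage of Lemma \ref{lem:cleavage} are all as intended.
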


\begin{remark}
If $\Eb$ is a subflag of $\Fb$ then the structure map $(\pi_!R)(\Eb)\lra
(\pi_!R)(\Fb)$ induces a map 
$$(\pi_!R)(\Fb) \tensor_{(\pi_!R)(\Eb)} (\pi_!M)(\Eb) \lra (\pi_!M)(\Fb). $$
This is a product over flags $E$ with $\pi E=\Eb$ of terms 
$$\left(\prod_{F\geq E, \pi F=\Fb}R(F) \right)\tensor_{R(E)}M(E)\lra
M(F). $$
Note in particular that even if $R(F)\tensor_{R(E)}M(E)\cong M(F)$,
the corresponding statement will usually not hold for the $\pi_!R$-module $\pi_!M$.
\end{remark}

\section{A left adjoint to $e$}
\label{sec:pistar}
In this subsection we again consider a surjective map $\pi :\Sigma
\lra \Sigmab$.  Given a $\Sigma$-diagram of rings $R$, we form the $\flag
(\Sigma)$-diagram $R^f$. We suppose given  a  $\flag(\Sigmab)$-diagram
$\RRbf$ with a map $\RRbf \lra \pi_!R$ which becomes an isomorphism
with  $e$ applied, so  that $e\RRbf=\RRf$. 

Using idempotents as in Subsection 
\ref{subsec:flagidempotents}, and using the fact that $e\RRbf=\RRf$,
we have a functor 
$$e: \mbox{$\RRbf$-modules} \lra \mbox{$\RRf$-modules}. $$
We have already constructed a right adjoint $\pi_!$ to $e$, and in this section we construct  a left adjoint
$$\pi_*: \mbox{$\RRf$-modules}\lra \mbox{$\RRbf$-modules}. $$
We do not display the dependence of this functor on $\RRbf$ in the notation. 

\subsection{Definition of $\pi_*$}
Notationally, we consider  flags $E=(K_0\supset K_1
\supset \cdots \supset K_s)$ and $F=(L_0\supset L_1
\supset \cdots \supset L_t)$ in $\Sigma$, and flags in
$\Sigmab$ will use corresponding barred notation
 so that $\Eb=(\Kb_0\supset \Kb_1\supset \cdots \supset \Kb_s)$ and 
 $\Fb=(\Lb_0\supset \Lb_1\supset \cdots \supset \Lb_t)$. 

We first recall that the {\em right} adjoint $\pi_!$ was defined as
follows: for an $\RRf$-module $X$, the module $\pi_!X$ is
defined  on the flag $\Fb$ using products
$$(\pi_!X)(\Fb)=\prod_{\pi F=\Fb}X(F). $$
We needed the fact that the map of flags was a Grothendieck fibration as stated in Lemma \ref{lem:cleavage} to define the structure maps. The first guess about how
to construct a {\em left} adjoint would be  to replace the product with a sum. This works
if $\Sigma$ is finite, but in general the structure map including a
length 0 flag in a length 1 flag cannot be defined because  
the map  $X(L)\lra  \prod_{K\supset L}X(K\supset L)$ usually fails to  factor through
the sum.

\begin{defn}
\label{defn:pistar}
For each $\RRf$-module $X$, we define  $\pi_*X$ in steps. 
First, on the  flag $\Fb=(\Kb_0\supset \Kb_1\supset \cdots \supset \Kb_s)$ we define $(\pi'_*X)(\Fb)$ to be the sum:
$$(\pi'_*X)(\Fb)=\bigoplus_{\pi F=\Fb}X(F) \subseteq \prod_{\pi F=\Fb}X(F)=(\pi_!X)(\Fb) .$$
The value we want $(\pi_*X)(\Fb)$ lies between the sum and the product
$$(\pi'_*X)(\Fb)=\bigoplus_{\pi F=\Fb}X(F) \subseteq (\pi_*X)(\Fb)\subseteq
\prod_{\pi F=\Fb}X(F) =(\pi_!X)(\Fb) .$$
We take $(\pi_*X)(\Fb)$ to be the $\RRbf (\Fb)$-submodule spanned by
the sum $(\pi_*'X)(\Fb)$ together with the
images of the singleton flags:
$$(\pi_*X)(\Fb)=(\pi'_*X)(\Fb)+\sum_{j=0}^t (\pi_*X)(\Fb,\Lb_j)$$
where 
$$(\pi_*X)(\Fb,\Lb_j)=\RRbf(\Fb)\cdot \pi_!(\Lb_j\lra \Fb)(X(\Lb_j)).$$
\end{defn}

\begin{remark}
It is important that we have
not taken the image of $\pi_!$ but rather the $\RRbf(\Fb)$-submodule
it generates. 
\end{remark}

\begin{lemma}
The structure maps of $\pi_!X$ respect the submodules $(\pi_*X)(\Fb)$, and
hence $\pi_*X$ is an $\RRbf$-module functorially associated to $X$. 
\end{lemma}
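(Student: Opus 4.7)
The plan is to verify that the structure maps of $\pi_!X$ preserve the submodules $(\pi_*X)(\Fb)$; the functoriality in $X$ then follows formally. Fix a flag inclusion $\Eb \subseteq \Fb$ in $\flag(\Sigmab)$ and let $\mu = \pi_!(\Eb \to \Fb)$ be the corresponding structure map. Since $\mu$ is a module homomorphism over the ring map $\RRbf(\Eb) \to \RRbf(\Fb)$, it suffices to check that $\mu$ sends each of the two types of generators of $(\pi_*X)(\Eb)$ into $(\pi_*X)(\Fb)$: the direct summand pieces $X(E) \hookrightarrow (\pi'_*X)(\Eb)$ for $E$ with $\pi E = \Eb$, and the vertex pieces $\pi_!((\Kb_i) \to \Eb)(X(\Kb_i))$ for each vertex $\Kb_i$ of $\Eb$.

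The vertex case is handled by functoriality of $\pi_!$. Because $\Eb$ is obtained from $\Fb$ by omitting vertices, every vertex $\Kb_i$ of $\Eb$ is also a vertex $\Lb_{j(i)}$ of $\Fb$; composing the flag inclusions $(\Kb_i) \subseteq \Eb \subseteq \Fb$ yields $\mu \circ \pi_!((\Kb_i) \to \Eb) = \pi_!((\Kb_i) \to \Fb)$. Consequently
$$\mu\bigl(\RRbf(\Eb) \cdot \pi_!((\Kb_i) \to \Eb)(X(\Kb_i))\bigr) \subseteq \RRbf(\Fb) \cdot \pi_!((\Lb_{j(i)}) \to \Fb)(X(\Lb_{j(i)})) = (\pi_*X)(\Fb, \Lb_{j(i)}),$$
which sits inside $(\pi_*X)(\Fb)$.

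The summand case is the substantive step. For $x \in X(E)$, the Grothendieck cleavage (Lemma \ref{lem:cleavage}) shows that $\mu(x)$ has $F$-component equal to the image $\sigma_{E \to F}(x)$ precisely when $F \supseteq E$ and $\pi F = \Fb$, and zero otherwise. If this support is finite, $\mu(x)$ already lies in $(\pi'_*X)(\Fb)$ and we are done. In general we must exhibit the resulting diagonal tuple inside the $\RRbf(\Fb)$-submodule spanned by the direct summand pieces together with the vertex contributions. The strategy would be to factor the $\RRf$-module structure of $X$ through a singleton vertex of $E$, say $(l(E)) \subseteq E$, using $\sigma_{E \to F} = \sigma_{(l(E)) \to F} \circ \sigma_{(l(E)) \to E}$, and then use this factorization together with the $\RRbf(\Fb)$-action to express $\mu(x)$ as a finitely-supported element of $(\pi'_*X)(\Fb)$ plus an element of $(\pi_*X)(\Fb, l(\Eb))$ coming from a suitable element of $X(l(\Eb))$.

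Functoriality in $X$ is then immediate: a morphism $f: X \to X'$ of $\RRf$-modules induces component-wise maps $\pi_!X \to \pi_!X'$ which, by compatibility with the structure maps and the singleton-flag values defining the generators, restrict to a map $\pi_*X \to \pi_*X'$ respecting composition and identities. The main obstacle throughout is the summand case when the fibres of $\pi$ are infinite: one has to verify that the specific combination of direct sum and vertex generators used in the definition of $(\pi_*X)(\Fb)$ is large enough to contain the possibly infinite-support diagonals produced by $\mu$, and this requires careful bookkeeping of how the structure maps of $X$ factor through singleton flags in the cleavage.
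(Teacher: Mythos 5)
Your framing is right and the vertex case is handled correctly: since every vertex $\Kb_i$ of $\Eb$ is a vertex of $\Fb$, the composite $\mu\circ\pi_!((\Kb_i)\to\Eb)=\pi_!((\Kb_i)\to\Fb)$ carries $(\pi_*X)(\Eb,\Kb_i)$ into $(\pi_*X)(\Fb,\Kb_i)$, and $\mu$ being a module map over $\RRbf(\Eb)\to\RRbf(\Fb)$ lets you reduce to generators.

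The problem is the summand case, which you yourself flag as ``the substantive step'' and then leave unfinished. Your proposed reduction does not work as written. The factorisation $\sigma_{E\to F}=\sigma_{(l(E))\to F}\circ\sigma_{(l(E))\to E}$ does not typecheck: $\sigma_{(l(E))\to E}$ lands in $X(E)$, whereas $\sigma_{(l(E))\to F}$ has source $X(l(E))$, so the composite is not defined. The valid identity is $\sigma_{(l(E))\to F}=\sigma_{E\to F}\circ\sigma_{(l(E))\to E}$, but that only factors maps out of $X(l(E))$, and a general $x\in X(E)$ with $|E|\geq 1$ need not lie in the image of $\sigma_{(l(E))\to E}$. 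So there is no way to ``trace $x$ back to a singleton'' the way you suggest, and when the fibres of $\pi$ are infinite (as they are in the intended application $d\colon\connsub(G)\to[0,r]$, where the set of flags $F$ with cleavage subflag $E$ is typically infinite) the image $\mu(x)$ has infinite support and cannot be put into $(\pi'_*X)(\Fb)$. Your proposal ends by conceding that the necessary bookkeeping is not done; as it stands, this is a genuine gap, not just an omitted routine step.

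It is worth knowing that the paper's own proof is a single sentence which disposes of the singleton--flag generators by exactly the argument you give and says nothing explicit about the direct--sum generators. So your difficulty mirrors a terseness in the source: the paper treats the preservation of $(\pi'_*X)$ as evident, while you (correctly) notice it is not, but neither of you actually establishes it. If you want to close the gap you would need either an honest argument that $\mu$ sends $X(E)$ (for $|E|\geq 1$) into the span of the singleton--flag images and $(\pi'_*X)(\Fb)$, or a reformulated, recursively defined $(\pi_*X)(\Fb)$ that includes the images of $(\pi_*X)(\Eb)$ for all proper subflags $\Eb$, after which the lemma becomes tautological.
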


\begin{proof}
The additional generators in $\pi_*X$ beyond $\pi'_*X$ all come from
singleton flags, so that the image of any subflag $\Eb$ of $\Fb$ is
contained in the sum of the  images of its terms. 
\end{proof}

\begin{prop}
The functor $\pi_*$ is left adjoint to $e$:
$$\adjunction{\pi_*}
{\mbox{$\RRf$-modules}}{\mbox{$\RRbf$-modules}}
{e}$$
\end{prop}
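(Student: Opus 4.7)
The plan is to exhibit the adjunction by constructing unit and counit natural transformations $\eta_X: X \to e\pi_*X$ and $\epsilon_{\Mb}: \pi_*e\Mb \to \Mb$ and verifying the two triangle identities.

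The unit is essentially tautological. For each $F$ with $\pi F = \Fb$, the inclusion of $X(F)$ as the $F$-summand of $(\pi'_*X)(\Fb) \subseteq (\pi_*X)(\Fb) \subseteq (\pi_!X)(\Fb)$ lies in the $e_F$-component, so projecting by $e_F$ yields a map $X(F) \to e_F(\pi_*X)(\Fb) = (e\pi_*X)(F)$. Since $e_F(\pi_!X)(\Fb) = X(F)$ and $X(F) \subseteq (\pi_*X)(\Fb)$, this map is the identity on $X(F)$, so $\eta_X$ is a natural isomorphism; compatibility with the $R^f$-structure maps follows from the componentwise description of the maps of $\pi_!X$ supplied by the Grothendieck-fibration property (Lemma \ref{lem:cleavage}).

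To construct the counit, or equivalently the assignment sending $\alpha: X \to e\Mb$ to $\tilde\alpha: \pi_*X \to \Mb$, I proceed by induction on the length of $\Fb$. For a singleton $\Fb = \Kb$, $(\pi_*X)(\Kb) = \bigoplus_{\pi K' = \Kb}X(K')$ and I set $\tilde\alpha_{\Kb}(\sum x_{K'}) = \sum \alpha_{K'}(x_{K'}) \in \Mb(\Kb)$; this is $\RRbf(\Kb)$-linear because each $\alpha_{K'}(x_{K'})$ lies in $e_{K'}\Mb(\Kb)$ and is therefore $e_{K'}$-fixed, while $\alpha_{K'}$ itself is $R(K') = e_{K'}\RRbf(\Kb)$-linear. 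For a flag $\Fb$ of positive length with singleton subflags $\Lb_0, \ldots, \Lb_t$, the same rule defines $\tilde\alpha$ on $(\pi'_*X)(\Fb)$, and structure-map compatibility forces
\[
\tilde\alpha_{\Fb}\bigl(\pi_!(\Lb_j \to \Fb)(y)\bigr) := \Mb(\Lb_j \to \Fb)\bigl(\tilde\alpha_{\Lb_j}(y)\bigr)
\]
on the singleton-subflag contributions, then extended $\RRbf(\Fb)$-linearly. In the opposite direction one sends $\tilde\alpha$ to $\alpha_F := \tilde\alpha_{\Fb}|_{X(F)}$, which lands in $(e\Mb)(F) = e_F\Mb(\Fb)$ because $X(F) = e_F(\pi_*X)(\Fb)$.

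The main obstacle is the self-consistency of this formula, since a single element of $(\pi_*X)(\Fb)$ can be presented as an $\RRbf(\Fb)$-combination of both types of generators in several ways and all presentations must yield the same value in $\Mb(\Fb)$. The key computation is that on any generator $g$, the $e_F$-component of $\tilde\alpha(g) \in \Mb(\Fb)$ equals the value of $\alpha_F$ on the $F$-projection of $g$ in $X(F)$; this uses the naturality of $\alpha$ as an $R^f$-module map together with the observation that the structure map of $\RRbf$ sends the idempotent $e_L \in \RRbf(\Lb_j)$ to the orthogonal sum $\sum_{F':\, L'_j = L}e_{F'} \in \RRbf(\Fb)$. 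Thus $\tilde\alpha$ is determined componentwise by $\pi_!\alpha$, making the value independent of presentation. With well-definedness in hand, the triangle identities are immediate: $e\epsilon_{\Mb}\circ\eta_{e\Mb}$ is the identity on each $e_F\Mb(\Fb)$ by idempotence of $e_F$, and $\epsilon_{\pi_*X}\circ\pi_*\eta_X$ equals the identity on each generating piece by uniqueness, since $\eta_X$ is already an isomorphism.
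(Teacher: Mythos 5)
Your handling of the unit matches the paper: both observe that because $(\pi_*X)(\Fb)$ sits between the sum and the product, applying $e$ recovers $X$ on the nose. The divergence is in how the counit is produced, and this is where your argument has a genuine gap.

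You attempt to build the hom-set bijection $\alpha \mapsto \tilde\alpha$ directly, defining $\tilde\alpha$ on the two classes of generators of $(\pi_*X)(\Fb)$ and then arguing well-definedness componentwise. Your key computation -- that $e_F\tilde\alpha(g) = \alpha_F(\mathrm{proj}_F g)$, using naturality of $\alpha$ and the fact that structure maps of $\RRbf$ carry $e_L$ to $\sum_{F': L_j^{(F')} = L}e_{F'}$ -- is correct. But the conclusion ``thus $\tilde\alpha$ is determined componentwise by $\pi_!\alpha$, making the value independent of presentation'' is not valid for an arbitrary $\RRbf$-module $\Mb$. Knowing every idempotent component $e_F\tilde\alpha(g)$ does not determine the element $\tilde\alpha(g) \in \Mb(\Fb)$, because the natural map $\Mb(\Fb) \to \prod_{\pi F = \Fb}e_F\Mb(\Fb)$ need not be injective; that injectivity is precisely the content of the $p$-module condition of Definition \ref{defn:pishriek}, which is not assumed here. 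So if two linear combinations of generators present the same element of $(\pi_*X)(\Fb)$, your computation shows only that the two candidate values of $\tilde\alpha$ have the same $e_F$-components, not that they are equal.

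The paper's proof avoids this issue by making a different observation for the counit: the construction of $(\pi_*e\Xb)(\Fb)$ exhibits it directly as a submodule of $\Xb(\Fb)$ -- its generators are the $e_F\Xb(\Fb)$ (which lie in $\Xb(\Fb)$ since the $e_F$ are orthogonal idempotents of $\RRbf(\Fb)$) and the images under the structure maps $\Xb(\Lb_j\to\Fb)$ of $\bigoplus_L e_L\Xb(\Lb_j)$. The counit is then simply the inclusion $\pi_*e\Xb \hookrightarrow \Xb$; no formula on generators is needed and no well-definedness problem arises. With the unit an equality and the counit an inclusion, the triangle identities follow formally, as you observe at the end. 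To repair your proof you would need either to make this inclusion observation (and then set $\tilde\alpha = \epsilon_\Mb \circ \pi_*\alpha$), or to restrict to a subcategory on which $\Mb(\Fb) \to \prod_F e_F\Mb(\Fb)$ is injective, which is strictly narrower than the statement of the proposition.
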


\begin{proof}
To define the unit $X \lra e\pi_*X$ we need only note that since each $(\pi_*X)(\Fb)$ is
between the sum and the product, we have equality $e\pi_*X=X$. 

The counit $\pi_* e\Xb \lra \Xb$ is taken to be the inclusion, since by
definition, for each flag $\Fb$
$(\pi_*e\Xb)(\Fb)$ is a submodule of $\Xb (\Fb)$. 

The triangular identities are readily verified. 
\end{proof}

\subsection{The functor $\pi_*$ on qce-modules}
In this section we suppose given a $qce$-$\RRf$-module $M$, and we
consider the behaviour of $\pi_*$ on $X=\iota M$, where $\iota$ is the functor
including $qce$-modules in all modules. 

\begin{lemma}
If $X=\iota M$ for a $qce$-module $M$, then 
the submodule $(\pi_*X)(\Fb,\Lb_j)$ contains (as a retract) each of the submodules $X(F)$
with $\pi F=\Fb$.
\end{lemma}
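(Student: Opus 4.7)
My plan is to identify $X(F)$ with its image in $(\pi_!X)(\Fb)=\prod_{\pi F'=\Fb}X(F')$ under the extension-by-zero embedding into the $F$-factor, and then verify two things: first, that $X(F)\subseteq(\pi_*X)(\Fb,\Lb_j)$ inside $(\pi_!X)(\Fb)$; and second, that multiplication by the idempotent $e_F\in\RRbf(\Fb)$ restricts to a retraction $(\pi_*X)(\Fb,\Lb_j)\lra X(F)$. Since $(\pi_*X)(\Fb,\Lb_j)$ is by definition an $\RRbf(\Fb)$-submodule and hence closed under multiplication by $e_F$, the second assertion will be automatic once the first is proved: $e_F$ acts on $(\pi_!X)(\Fb)$ as projection onto its $F$-factor and thus fixes $X(F)$ pointwise while mapping the ambient submodule into it.

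For the containment, the key input is the $qce$-hypothesis. Applied to the singleton subflag $(L_j)\subseteq F$, where $L_j$ denotes the $j$-th entry of $F$, it yields an isomorphism $\RRf(F)\tensor_{\RRf(L_j)}X(L_j)\lraiso X(F)$ via the structure map $\iota\colon X(L_j)\lra X(F)$. Hence any $y\in X(F)$ can be written as $y=\sum_i a_i\,\iota(x_i)$ with $a_i\in\RRf(F)$ and $x_i\in X(L_j)$.

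To realise such a $y$ inside $(\pi_*X)(\Fb,\Lb_j)$, I would lift each $x_i$ to the element $\tilde x_i\in(\pi_!X)(\Lb_j)=\prod_{\pi L=\Lb_j}X(L)$ supported on the $L_j$-factor, and push it forward to $\tilde y_i:=\pi_!(\Lb_j\lra\Fb)(\tilde x_i)\in(\pi_!X)(\Fb)$, which by construction lies in the image used to define $(\pi_*X)(\Fb,\Lb_j)$. The element $\tilde y_i$ may be nonzero on every $F'$-component whose $j$-th entry equals $L_j$, so a priori it does not lie in $X(F)$. However, the hypothesis $e\RRbf=\RRf$ gives $\RRf(F)=e_F\RRbf(\Fb)$, so $a_i$ itself lies in $\RRbf(\Fb)$, and its action on $(\pi_!X)(\Fb)$ kills every $F'$-component with $F'\neq F$ while multiplying the $F$-component by $a_i$. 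Therefore $a_i\tilde y_i\in(\pi_*X)(\Fb,\Lb_j)$ has $F$-component $a_i\,\iota(x_i)$ and all other components zero, and summing over $i$ recovers $y$, giving the desired inclusion.

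The main, rather mild, obstacle is keeping straight the interplay between the submodule structure and the product-of-components structure: $(\pi_*X)(\Fb,\Lb_j)$ is defined as the full $\RRbf(\Fb)$-span of the image, not the bare image itself, and this extra freedom is precisely what allows the $e_F$-supported elements $a_i$ to zero out the unwanted components of $\tilde y_i$. Once this point is recognised, the argument reduces to the direct calculation above and no further work is required.
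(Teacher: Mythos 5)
Your argument is correct and is essentially the paper's proof: both use the $qce$-isomorphism at the singleton subflag $(L_j)\subseteq F$ to express any element of $X(F)$ as an $\RRf(F)$-combination of images from $X(L_j)$, and both use the idempotent $e_F\in\RRbf(\Fb)$ (acting as projection onto the $F$-factor) to simultaneously establish the containment and furnish the retraction. You simply unwind the two-sentence computation $e_F(\pi_*X)(\Fb,\Lb_j)=\RRf(F)\cdot\im\bigl[X(L_j)\to X(F)\bigr]=X(F)$ into an explicit element-level verification.
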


\begin{proof}
Suppose $F=(L_0\supset L_1\supset \cdots \supset L_t)$.
Note that we have an idempotent $e_F$ in $\RRbf (\Fb)$ so that for any
$\RRbf$-module $X$,  the image
of $X(\Lb_j)$ in $X(\Fb)$ contains the image of $e_{L_j}X(\Lb_j)$ in
$e_FX(\Fb)$. 

Now observe  that if $M$ is $qce$ then the image of any $M(L_j)$ in $M(F)$
generates $M(F)$ as an $\RRf (F)$-module. 
\end{proof}

The submodule $(\pi_*X)(\Fb) \subseteq (\pi_!X)(\Fb)$ is obtained by
permitting elements with infinitely many non-zero terms when they
occur along certain specific diagonals. However, as we saw in the previous lemma, the
diagonal elements automatically lead to the inclusion of elements with only
finitely many terms. To get the combinatorics under control, 
we  consider intersections of the submodules $(\pi_*X)(\Fb, \Lb_j)$. 
For the subflag, $\Eb =(\Kb_0\supset \cdots \supset \Kb_s)\subseteq \Fb$ we take
$$(\pi_*X)(\Fb,\Eb)=\bigcap_{i}(\pi_*X)(\Fb,\Kb_i) .$$

\begin{remark}
In applications, we need to consider differential graded objects $X$, and the 
Mayer-Vietoris spectral sequence gives a means of calculating the
homology of a complex $(\pi_*X)(\Fb)$ from those of the intersections. 
\end{remark}

\begin{lemma}
If $X=\iota M$ for a $qce$-module $M$ then for flags $\Eb \subseteq \Fb$ of
$\Sigmab$, 
$$(\pi_*X)(\Fb,\Eb)=\bigoplus_{\pi E=\Eb} (\pi_*X)(\Fb,\Delta E) , $$
where 
$$(\pi_*X)(\Fb,\Delta E) = \RRbf (\Fb) \cdot \im \left[ X(E)\stackrel{\Delta}\lra (\pi_!X)
  (\Fb)\right] .$$ 
\end{lemma}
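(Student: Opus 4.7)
The plan is to prove the identity by mutual inclusion. The key structural fact is that, by the cleavage of Lemma \ref{lem:cleavage}, each $F'' \in \pi^{-1}(\Fb)$ has a unique subflag $E^{F''} \subseteq F''$ projecting to $\Eb$; this yields a partition $\pi^{-1}(\Fb) = \bigsqcup_{\pi E = \Eb} \Phi_E$ with $\Phi_E := \{F'' : E^{F''} = E\}$, and correspondingly $(\pi_!X)(\Fb)$ splits as $\prod_E \prod_{F'' \in \Phi_E} X(F'')$. Since $F'' \geq E$ exactly when $F'' \in \Phi_E$, each $(\pi_*X)(\Fb, \Delta E)$ lies in the block indexed by $E$; these submodules therefore have pairwise disjoint supports, so any sum among them is automatically direct and embeds into $(\pi_!X)(\Fb)$.

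For the inclusion $\bigoplus_E (\pi_*X)(\Fb, \Delta E) \subseteq (\pi_*X)(\Fb, \Eb)$, fix $E$ and $i$; it suffices to show $(\pi_*X)(\Fb, \Delta E) \subseteq (\pi_*X)(\Fb, \Kb_i)$. Take a generator $\Delta(x)$ with $x \in X(E)$. The $qce$ hypothesis applied to the singleton subflag $K_i \subseteq E$ gives $R(E) \otimes_{R(K_i)} X(K_i) \cong X(E)$, so $x = \sum_\alpha r_\alpha \cdot \phi(u_\alpha)$ for finitely many $r_\alpha \in R(E)$ and $u_\alpha \in X(K_i)$, with $\phi$ the structure map. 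Placing each $u_\alpha$ at the $K_i$-coordinate of $(\pi_!X)(\Kb_i)$ gives an element whose image in $(\pi_!X)(\Fb)$ is supported on $\Phi_{K_i} \supseteq \Phi_E$, with value the image of $u_\alpha$ in $X(F'')$. Multiplying by scalars in $\RRbf(\Fb)$ whose $F''$-components carry lifts of $r_\alpha$ on $\Phi_E$ and vanish outside (available via the idempotent structure of Subsection \ref{subsec:flagidempotents} and the map $\RRbf \to \pi_!R$) assembles $\Delta(x)$ inside $(\pi_*X)(\Fb, \Kb_i)$.

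For the reverse inclusion, given $\xi \in \bigcap_i (\pi_*X)(\Fb, \Kb_i)$, decompose $\xi = \sum_E \xi_E$ along the block partition, with $\xi_E$ supported on $\Phi_E$. For each $i$, a finite expression $\xi = \sum_j r_j^{(i)} \cdot \pi_!(\mathrm{incl})(\zeta_j^{(i)})$ with $\zeta_j^{(i)} \in (\pi_!X)(\Kb_i)$ restricts on $\Phi_E$ to a presentation of $\xi_E$ in terms of $R(F'')$-scalar multiples of structure-images of finitely many elements of $X(K_i)$. Using the $qce$ isomorphism $X(E) \cong R(E) \otimes_{R(K_i)} X(K_i)$ to lift those elements back to $X(E)$ coherently across all $F'' \in \Phi_E$, one rewrites $\xi_E$ as a finite $\RRbf(\Fb)$-combination of diagonal images $\Delta(y_\beta)$, placing $\xi_E \in (\pi_*X)(\Fb, \Delta E)$. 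Finiteness of the original sum forces only finitely many $\xi_E$ to be non-zero.

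The main obstacle lies in the reverse inclusion: reconstructing a single $X(E)$-level presentation of $\xi_E$ that is simultaneously compatible with the $K_i$-level presentations provided by membership in each $(\pi_*X)(\Fb, \Kb_i)$. This is where the full strength of the $qce$ condition (both quasi-coherence and extended-ness together) is used, to ensure that the local-to-global reconstruction of a diagonal combination is actually possible.
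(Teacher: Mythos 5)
Your argument takes a genuinely different route from the paper's, and the difference is what produces the gap. The paper's proof first observes that, since $(\pi_*X)(\Fb,\Eb)$ is by definition the intersection $\bigcap_i (\pi_*X)(\Fb,\Kb_i)$, one may reduce to the singleton case $\Eb=(\Lb_j)$. For a singleton the result is then essentially formal: $(\pi_*X)(\Fb,\Lb_j)$ is the $\RRbf(\Fb)$-span of the image of a \emph{direct sum} $\bigoplus_{\pi L_j=\Lb_j}X(L_j)$, so every generator already has finite support in the $L_j$-direction and decomposes term-by-term into the $\Phi_{L_j}$-blocks; since those blocks are disjoint, the sum of the pieces is automatically direct. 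This is exactly your opening observation about disjoint supports, but the paper exploits it together with the finiteness coming from the $\bigoplus$ and is finished. Note in particular that no $qce$ reconstruction, and no idempotents for blocks, are needed at this step.

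Your proof instead attacks a general flag $\Eb$ head-on by mutual inclusion, and both directions as written have problems. In the forward direction you need scalars in $\RRbf(\Fb)$ that carry lifts of $r_\alpha$ on $\Phi_E$ and \emph{vanish outside} $\Phi_E$; the paper's hypothesis is only that $e_F\RRbf(\Fb)=R(F)$ for individual flags $F$, which supplies idempotents for single flags, not the idempotent of the typically infinite union $\Phi_E$, so the ``available via the idempotent structure'' step is not justified in the general setting. In the reverse direction you yourself flag the reconstruction of a coherent $X(E)$-level presentation of each $\xi_E$ as ``the main obstacle'' and leave it unresolved; and if you really mean $\zeta^{(i)}_j\in(\pi_!X)(\Kb_i)$ (the product rather than the direct sum), then even the claim that only finitely many $\xi_E$ are non-zero fails, since a single generator from the product already has infinite block support. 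You should make the singleton reduction first and then run the finite-support/disjoint-blocks argument; the whole $qce$-reconstruction machinery you invoke is unnecessary once the direct-sum nature of the generating set is in view.
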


\begin{proof}
In view of the intersection result, it suffices to prove the result
for the singleton subflags $\Eb=\Lb_j$. 

Note that since $(\pi_*X)(\Fb, \Lb_j)$ is the image of a map from a sum
of the terms $X(L_j)$ with $\pi L_j=\Lb_j$ the image is a corresponding
sum. This gives the first equality
$$(\pi_*X)(\Fb,\Lb_j) =\sum_{\pi L_j=\Lb_j}(\pi_*X)(\Fb,L_j)
=\bigoplus_{\pi 
  L_j=\Lb_j}(\pi_*X)(\Fb,L_j); $$
the sum is direct,  since the term $(\pi_*X)(\Fb,L_j)$ is only
non-zero in the $F$-components if the  flag $F$ contains $L_j$. 
\end{proof}

\section{Euler-adapted change of poset for coefficient systems}
\label{sec:Euleradapted}

We continue with the notation of Section \ref{sec:changeposetI} with a
splitting system $R^s$ on $\Sigma $ giving a coefficient system $R^f$ on
$\flag (\Sigma) $ and  a map $p:\Sigma \lra \Sigmab$.  We now suppose that $R$ is equipped with  maximally generated Euler
classes, and that $\pi:\Sigma \lra \Sigmab$ takes  top and maximal elements to top and maximal elements. 

In Subsection \ref{sec:pshriekone} we constructed a right adjoint
functor $\pi_!$ to $e$ on coefficient systems and
on modules, and in this subsection we describe a variant $\pes$
suitable for quasi-coherent modules in  which the Euler classes are taken from $\Sigma$. 

\subsection{The Euler-adapted construction}

To start with, the coefficient system agrees with $\pi_!R $ on vertices 
$$(\pes R)(\Kb) =\prod_{\pi K=\Kb} R(K).  $$

\begin{defn}
(i) In a maximally generated system of Euler classes, any $e \in
\cE_{K}$ may be written as a finite product $e=\prod_ie_i$ where
$e_i=\infl_{G/H_i}^{G/K}e'_i$ with $H_i$ maximal and $K\not \subseteq
H_i$. We may then write $e_L$ for the product of those $e_i$ with
$H_i\supset L$. 

If we have some set of subgroups $L\subseteq K$ then we may define
$$\cEi_K \prod_LM(L) =\colim_{e\in \cE_{K}}\prod_L\left[
M(L)\stackrel{e_L}\lra M(L) \right]. $$

(ii) We define a coefficient system $\pes R$ on $\flag (\Sigmab)$ as follows. 
If $\Fb =(\Lb_0\supset \Lb_1 \supset \cdots \supset \Lb_t)$ we take
$$(\pes R)(\Fb)=\prod_{p (L_0)=\Lb_0} \cEi_{L_0} \prod_{\pi (L_1)=\Lb_1,
L_1 \subseteq L_0} \cdots \cEi_{L_{t-1}}\prod_{\pi(L_t)=\Lb_t,
L_t\subseteq L_{t-1}}  R(G/L_t).$$

If we have an inclusion $a: \Eb\lra  \Fb$ of flags we need to describe the induced map 
$$(\pes R)(a): (\pes R)(\Eb)\lra (\pes R)(\Fb).$$
 It suffices to do this when $\Eb$ is obtained
by omitting one factor, so we suppose $\Fb=(\Lb_0\supset \cdots \supset
\Lb_t)$
and that $\Eb$ omits $\Lb_j$. 

If $j=t$ then we first describe $(\pes R)(\Lb_{t-1})\lra (\pes R)(\Lb_{t-1}\supset
\Lb_t)$. We take the product of factors indexed by $L_{t-1}$ with
$\pi L_{t-1}=\Lb_{t-1}$; the $L_{t-1}$ factor is the map
$$R(G/L_{t-1}) \lra \prod_{\pi L_t=\Lb_t, L_t\subset
  L_{t-1}}R(G/L_t)\lra \cEi_{L_{t-1}}\prod_{\pi L_t=\Lb_t, L_t\subset L_{t-1}}R(G/L_t)$$
where the first map has  components which are the inflations from
$G/L_{t-1} $ to $G/L_t$ and the second map inverts $\cE_{L_{t-1}}$. 
To obtain the map for $a: \Eb\lra \Fb$ we apply the sequence of
localizations and products to each term.

If $j<t$   then we apply an operation to $R^{\dagger}=(\pes R)(\Lb_{j+1}\supset \cdots
\supset \Lb_t)$. Indeed, adding $\Lb_j$ to the flag on the codomain we
need a map
$$R^{\dagger}\lra \prod_{\pi L_j=\Lb_j}\cEi_{L_j}R^{\dagger}=(\pes R)(\Lb_j\supset \Lb_{j+1}\supset
\cdots \supset \Lb_t),$$
and we use the map whose components are the localizations. 
To obtain the map for $a:\Eb\lra \Fb$ we apply the sequence of
localizations and products to each term.
\end{defn}

\begin{remark}
(i) If the localizations all involved inverting only units, we could omit
$\cEi$ everywhere and find
$R(F)=R(G/L_t)$, and  $(\pes R)(\Fb)=\prod_{\pi F=\Fb}R(F)$. When we invert
non-units, the localizations for
the $\flag (\Sigma)$ system just accumulate, but those for the $\flag
(\Sigmab)$ impose a continuity condition related to the finiteness of
the fibres of $\pi$. The statement of Lemma \ref{lem:pshrieks} below is a
stronger variant of this.

(ii) We are assuming that the number of maximal elements of $\Sigma$
and the number of maximal generators are countable. To calculate the direct limit in the first part of the definition
we may choose an ordering on the maximal elements $H$ not  containing $K$ and the maximal
generators and then order the elements $e$ accordingly. The colimit is
independent of this ordering. 

(iii) The coefficient system on $\flag (\Sigmab)$ differs from the coefficient system on $\flag (\Sigma)$
in that   the maps $\partial_iF\subseteq F$ will usually not induce the
identity. This is partly because the number flags $E$ over a subflag
$\Eb$ of $\Fb$ will depend on $\Eb$, and partly because of the
relationship between the localization of a product and
the product of localizations. 
\end{remark}

\subsection{Relationship between $\pi_!$ and $\pes$}
First we note that the coefficient rings $\pi_!R$ and $\pes R$ are closely
related. 

\begin{lemma}
\label{lem:pshrieks} 
There is a map $\pes R \lra \pi_!R$ of coefficient systems on $\flag
(\Sigmab)$ which is the identity on flags of length 0. 
\end{lemma}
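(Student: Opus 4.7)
The plan is to define the map component-by-component using the universal property of localization, and then verify naturality in flag inclusions. Throughout I write $F = (L_0 \supset L_1 \supset \cdots \supset L_t)$ for a $\Sigma$-flag with $\pi F = \Fb = (\Lb_0 \supset \cdots \supset \Lb_t)$.

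For length 0 flags $\Fb = (\Lb)$, direct inspection of the definitions gives $(\pes R)(\Lb) = \prod_{\pi L = \Lb} R(G/L) = (\pi_! R)(\Lb)$, so I would take the component map to be the identity. This already establishes the claim in the statement.

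For higher flags, the key observation is the following. Given $e = \prod_i e_i \in \cE_{L_0}$, with each $e_i = \infl_{G/H_i}^{G/L_0} e'_i$ for maximal $H_i \not\supseteq L_0$, the action of $e$ on $R(G/L)$ for $L \subseteq L_0$ goes through the element $e_L = \prod_{H_i \supset L} e_i$, which is inflated to $R(G/L)$ and in fact lies in $\cE_{L_0/L}$. Hence the universal property of localization yields a natural map
\[
\cEi_{L_0} \prod_{L \subseteq L_0} M(L) \;\longrightarrow\; \prod_{L \subseteq L_0} \cEi_{L_0/L} M(L).
\]
Applying this building block at each of the nested layers of $(\pes R)(\Fb)$, working from outside in, turns the nested form
\[
\prod_{L_0} \cEi_{L_0} \prod_{L_1 \subseteq L_0} \cdots \cEi_{L_{t-1}} \prod_{L_t \subseteq L_{t-1}} R(G/L_t)
\]
into the fully distributed form
\[
\prod_{L_0, L_1, \ldots, L_t} \cEi_{L_{t-1}/L_t} \cdots \cEi_{L_1/L_2} \cEi_{L_0/L_1} R(G/L_t),
\]
and this target coincides with $(\pi_! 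R)(\Fb) = \prod_{\pi F = \Fb} \cEi_{L_0/L_t} R(G/L_t)$ by transitivity of the Euler system.

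Next I would verify naturality in flag inclusions. It suffices to check the two atomic cases: $\Eb = \partial_j \Fb$ with $j = t$ (removing the last term) and $j < t$ (removing an intermediate term). In the first case, the structure maps on both sides add the innermost product-and-localization layer, and compatibility is immediate from the inductive construction. In the second case, an extra product-then-localization layer is inserted in the middle, and compatibility again reduces to naturality of the building-block map above, since the same $\cE_{L_j}$ localization is being applied on both sides.

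The main obstacle is purely bookkeeping: matching the strictly nested shape of $(\pes R)(\Fb)$ against the iterated-transitivity rewriting of $(\pi_! R)(\Fb)$, and ensuring that at every layer one tracks correctly how an element of $\cE_{L_j}$ acts on a factor indexed by $L \subseteq L_j$ via its projection to $\cE_{L_j/L}$. Once the building-block localization map is in hand, no further calculation is required, and the verification of naturality is mechanical.
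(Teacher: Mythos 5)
Your proposal is correct and takes essentially the same route as the paper: both construct the map via the universal property of localization applied at each nested layer, using the fact that $e_L\in\cE_{L_0/L}$ so that each $\cE_{L_j}$ acts invertibly downstream. The only cosmetic difference is that you pass through the fully distributed intermediate form $\prod\cEi_{L_{t-1}/L_t}\cdots\cEi_{L_0/L_1}R(G/L_t)$ and then invoke transitivity, whereas the paper goes directly from each outer $\cEi_{L_j}$ to the innermost $\cEi_{L_0/L_t}R(G/L_t)$; your version spells out the layer-by-layer bookkeeping and the naturality check a bit more explicitly, but the underlying idea is identical.
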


\begin{proof}
Writing $\cN_L$ as a typographical placeholder for the identity
functor, we see the universal properties of localization give maps 
$$\begin{array}{rcl}
(\pes R)(\Fb)&=&\prod_{\pi (L_0)=\Lb_0} \cEi_{L_0} \prod_{\pi (L_1)=\Lb_1,
L_1 \subseteq L_0} \cdots \cEi_{L_{t-1}}\prod_{\pi(L_t)=\Lb_t,
L_t\subseteq L_{t-1}}  R(G/L_t)\\
\downarrow &&\downarrow\\
(\pi_!R)(\Fb)&=&\prod_{\pi (L_0)=\Lb_0} \cN_{L_0} \prod_{\pi (L_1)=\Lb_1,
L_1 \subseteq L_0} \cdots \cN_{L_{t-1}}\prod_{\pi(L_t)=\Lb_t,
L_t\subseteq L_{t-1}}  \cEi_{L_0/L_t} R(G/L_t)
\end{array}$$
\end{proof}

The fact that the idempotents came from an unlocalized product means
$\pes$ inherits the idempotent properties of $\pi_!$. 

\begin{lemma}
\label{lem:idempotents}
The map of Lemma \ref{lem:pshrieks} is compatible with idempotents;
indeed $e_F(\pi_!R)(\Fb) =R(F)=e_F(\pes R)(\Fb)$ so that applying $e$ to 
$\pes R \lra \pi_! R$ we obtain the identity. \qqed 


\end{lemma}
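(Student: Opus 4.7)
The substantive content of the lemma is the identity $e_F(\pes R)(\Fb) = R(F)$, since the corresponding statement $e_F(\pi_!R)(\Fb) = R(F)$ was already established in the lemma preceding Lemma \ref{lem:defe}. Once both identifications are in hand, the compatibility with idempotents is immediate, and the identity claim after applying $e$ follows from unravelling what the map of Lemma \ref{lem:pshrieks} does at each level.

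The plan for the main identification is to expand the nested definition of $\pes R(\Fb)$ and push the idempotent $e_F$ through it layer by layer. Writing $F=(L_0\supset L_1\supset\cdots\supset L_t)$, the idempotent $e_F$ is a product of component idempotents, one selecting the $L_i$-factor at each nested product $\prod_{\pi L_i=\Lb_i,\, L_i\subseteq L_{i-1}}$. Since idempotents are central, they commute with the localizations $\cEi_{L_i}$ wrapping each product, so we may freely move each component past its surrounding $\cEi_{L_i}$. After collapsing every product to the chosen factor, we obtain
\[
e_F \pes R(\Fb)= \cEi_{L_0}\cEi_{L_1}\cdots \cEi_{L_{t-1}}\, R(G/L_t),
\]
where each $\cEi_{L_i}$ now acts on a single ring rather than on a product.

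The next step is to identify this composite localization with $\cEi_{L_0/L_t}$. Tracing the definitions, $\cEi_{L_i}$ (after the idempotent has chosen the chain $L_i\supset L_{i+1}\supset\cdots\supset L_t$) inverts exactly the Euler classes of the form $\infl_{G/H_j}^{G/L_t}(e'_j)$ with $H_j$ maximal, $L_{i+1}\subseteq H_j$, and $L_i\not\subseteq H_j$; these are precisely the generators of the inflation of $\cE_{L_i/L_{i+1}}$ down to $L_t$. Applying the transitivity hypothesis $t-1$ times then collapses the nested localizations to $\cEi_{L_0/L_t}R(G/L_t)=R^f(F)=R(F)$, as required.

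Finally, for the statement that $e$ carries the map $\pes R\to\pi_!R$ to the identity, the preceding computations show that both $e\pes R$ and $e\pi_!R$ evaluate on a flag $F$ to $R(F)$. Since Lemma \ref{lem:pshrieks} guarantees the map is the identity on length-$0$ flags, and the map on longer flags is induced by the universal property of (partial) localization of the common subring $R(G/l(F))$, the induced map on $e\pes R\to e\pi_!R$ must be the identity. I expect the main obstacle to be the bookkeeping in the second paragraph: one must check that commuting each component idempotent past its surrounding localization really does leave the resulting $\cEi_{L_i}$ acting on $R(G/L_t)$ by the expected inflated multiplicative set, so that transitivity can then be invoked cleanly.
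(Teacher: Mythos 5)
The proposal is correct. The paper marks this lemma without a proof body --- the preceding sentence about the idempotent arising from an unlocalized product is the author's entire justification --- and your argument supplies the natural unwinding: you push the idempotent projection through the filtered colimit defining each $\cEi_{L_i}$ (which is legitimate since the structure maps respect the product decomposition), correctly identify the elements inverted after projection as the generators of the inflated $\cE_{L_i/L_{i+1}}$, and then invoke transitivity to collapse the composite to $\cEi_{L_0/L_t}R(G/L_t)=R(F)$, matching the earlier computation for $\pi_!R$.
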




Using the idempotents introduced in Lemma \ref{lem:idempotents} we may define a functor as follows. 

\begin{lemma} 
Extending scalars to $\pi_!R$ and applying idempotents gives 
a functor 
$$e: \mbox{$\pes R^f$-modules}\lra \mbox{$R^f$-modules}.$$ 
given  by 
$$(e\Mb)(F) =e_F\left[ \Mb (\pi F) \right],   $$ 
where $e_F\in R(\pi F)$ is the idempotent corresponding to $F$. \qqed  
\end{lemma}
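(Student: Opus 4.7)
The plan is to mirror the construction of Lemma \ref{lem:defe} essentially verbatim, with Lemma \ref{lem:idempotents} playing the role of the missing ingredient. The key point is that even though $\pes R$ and $\pi_!R$ differ (the former imposes continuity/localization conditions and the latter does not), they agree upon applying $e$, so all the constructions involving idempotents carry over directly.

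First I would define the functor on objects by $(e\Mb)(F) := e_F\Mb(\pi F)$, interpreting $e_F$ as the idempotent in $\pes R(\pi F)$ that, by Lemma \ref{lem:idempotents}, satisfies $e_F(\pes R)(\pi F) = R(F)$. Thus $e_F\Mb(\pi F)$ is naturally an $R(F)$-module, which is what is needed. Next I would define the structure maps: given an inclusion $E \subseteq F$ of $\Sigma$-flags with $\pi E \subseteq \pi F$, the $\pes R$-module structure on $\Mb$ supplies a map $\Mb(\pi E) \to \Mb(\pi F)$ over the ring map $\pes R(\pi E) \to \pes R(\pi F)$. The image of $e_E$ under $\pes R(\pi E) \to \pes R(\pi F)$ refines $e_F$ (this is inherited from the corresponding assertion for $\pi_!R$ via Lemma \ref{lem:idempotents}), so one obtains
$$
(e\Mb)(E) = e_E\Mb(\pi E) \lra e_E\Mb(\pi F) \lra e_F\Mb(\pi F) = (e\Mb)(F),
$$
which is a map of $R(E)$-modules compatible with the ring map $R(E) \to R(F)$.

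Functoriality in $\Mb$ is immediate since everything is defined by applying $\Mb$ to arrows of $\flag(\Sigmab)$ and then multiplying by fixed idempotents. Functoriality in the flag variable (respecting composition of subflag inclusions and sending identities to identities) reduces to the analogous property for $\Mb$ as a $\pes R^f$-module together with the refinement compatibility $e_{E''} \mapsto e_{E'} \mapsto e_F$ for nested subflags $E'' \subseteq E' \subseteq F$. The latter is again inherited from $\pi_!R$ via Lemma \ref{lem:idempotents}.

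I do not expect any real obstacle: the only subtle input is that the idempotents $e_F$ behave identically in $\pes R$ and in $\pi_!R$, and this is precisely the content of Lemma \ref{lem:idempotents}. Every step thereafter is formal, parallel to the proof of Lemma \ref{lem:defe}.
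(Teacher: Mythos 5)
Your proposal is correct and matches the intended argument: you interpret the formula literally, multiplying $\Mb(\pi F)$ by the idempotent $e_F$ (viewed inside $(\pes R)(\pi F)$), and invoke Lemma \ref{lem:idempotents} for the compatibility of idempotents and their refinements between $\pes R$ and $\pi_! R$, exactly as the paper's surrounding remark (that $\pes$ inherits the idempotent properties of $\pi_!$ since the idempotents come from an unlocalized product) anticipates. Since the lemma is stated with its proof omitted, your verification supplies the routine details parallel to Lemma \ref{lem:defe}, which is precisely the approach the paper has in mind.
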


\subsection{Relative continuity}
\label{subsec:relctty}

We will define an Euler-compatible right adjoint to $e$. This involves
retricting  the $R$-modules to be compatible with Euler classes in the
sense that they are quasi-coherent (or last-determined), and so that 
they are compatible with $p$.  In effect, the poset structure on
$\Sigmab$ specifies a  topology  on $\Sigmab$ (open sets generated by 
the sets $V(L):= \{ K\st K\supseteq L\}$ of elements above an
element),  and we may imagine that the fibres of $\pi$
specify infinitesimal neighbourhoods of points of $\Sigmab$. This
`topological' structure is then inherited by $\flag (\Sigma)$ and
$\flag (\Sigmab)$. The additional continuity condition explains how
the points of the infinitesimal neighbourhoods approach the limit point. 

\begin{defn}
\label{defn:pishrieke}
If we are given an $R$-module $M$ over $\flag (\Sigma)$ we may consider
its {\em $\pi$-continuous} sections over a flag in $\Sigmab$. Indeed, 
 if $\Fb=(\Lb_0\supset \cdots \supset
\Lb_t)$, we take
$$(\pes M)(\Fb)=M (\Fb)_c=\prod_{\pi (L_0)=\Lb_0} \cEi_{L_0} \prod_{\pi (L_1)=\Lb_1,
L_1 \subseteq L_0} \cdots \cEi_{L_{t-1}}\prod_{\pi(L_t)=\Lb_t,
L_t\subseteq L_{t-1}}  M (L_t).$$
This is evidently a module over 
$$(\pes R)(\Fb)=R(\Fb)_c=\prod_{\pi (L_0)=\Lb_0} \cEi_{L_0} \prod_{\pi (L_1)=\Lb_1,
L_1 \subseteq L_0} \cdots \cEi_{L_{t-1}}\prod_{\pi(L_t)=\Lb_t,
L_t\subseteq L_{t-1}}  R (L_t).$$
\end{defn}

As things stand, there is no reason why the structure maps of
$M$ should take continuous sections to continuous sections if the last
term in the flag changes. 

\begin{defn}
\label{defn:pistructure}
A {\em $\pi$-structure} on an $R^f$-module $M$ is a transitive choice of liftings
for each $\Kb\supset \Lb$ and $K$ with $\pi K=\Kb$:
$$\diagram & \cEi_K \prod_{\pi L =\Lb,L\subset K} M(L) \dto  \\
M(K) \rto \ar@{-->}[ur] & 
\prod_{\pi L =\Lb,L\subset K} \cEi_{K/L}M(L) 
\enddiagram$$
Using the language of continuous sections, this can be written in the form
$$\diagram & e_KM(\Kb \supset \Lb)_c\dto\\
M(K) \rto \ar@{-->}[ur] & 
\prod_{\pi L =\Lb,L\subset K} e_{K\supset
 L}M(\Kb\supset \Lb)_c
\enddiagram$$

A map of $R^f$-modules is compatible with $\pi$-structure if it commutes
with the chosen liftings. We write $\pi$-cts-$R$-modules for the category of these. 
\end{defn}

\begin{remark}
Perhaps the best way to formalize this structure and to make the
statement of transitivity clear is to say that a $\pi$-structure is a
map $M\lra \pi_!^eeM$. This turns out to be the unit of an adjunction,
which then explains the role of $\pi$-structures. 
\end{remark}


It seems that a $\pi$-structure is quite subtle in general, but there is
a simple source of $\pi$-structures important in our applications. 

\begin{lemma}
\label{lem:mininducespstructure}
If $\Sigma$ has a bottom element 1 then any quasi-coherent $\flag
(\Sigma)$ module $M$ has a canonical $\pi$-structure. 
\end{lemma}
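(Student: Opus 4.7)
The plan is to use the bottom element $1 \in \Sigma$ to manufacture, for each $m \in M(K)$, a single common denominator $e_0 \in \cE_K$ that simultaneously clears denominators in all the $\sigma_{KL}(m) \in \cEi_{K/L}M(L)$ as $L$ ranges over the elements with $\pi L = \Lb$ and $L \subset K$. Recall that the target $\cEi_K \prod_{\pi L = \Lb,\, L \subset K} M(L)$ is defined as the colimit $\colim_{e \in \cE_K} \prod_L [M(L) \stackrel{e_L}\lra M(L)]$, whose elements are represented by pairs $(e, (m_L)_L)$ sharing a single $e \in \cE_K$, and the vertical map sends such a class to the per-factor fraction $(m_L/e_L)_L \in \prod_L \cEi_{K/L} M(L)$. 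So to lift the horizontal map $(\sigma_{KL}(m))_L$, where $\sigma_{KL} \colon M(K) \to \cEi_{K/L} M(L)$ denotes the structure map for the flag $(K \supset L)$, it suffices to find one uniform $e$ with compatible $m_L \in M(L)$.

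For each $m \in M(K)$, quasi-coherence at the flag $(K \supset 1)$ gives the structure map $\mu_K \colon M(K) \to M(K \supset 1) = \cEi_K M(1)$, so I may write $\mu_K(m) = n/e_0$ with $n \in M(1)$ and $e_0 \in \cE_K$. The compatibility along the flag $(K \supset L \supset 1)$ reads $(\cEi_{K/L}\mu_L) \circ \sigma_{KL} = \mu_K$, where $\mu_L \colon M(L) \to \cEi_L M(1)$ is the analogous structure map and the identification $\cEi_{K/L} \cEi_L M(1) = \cEi_K M(1)$ comes from transitivity of the Euler classes. This compatibility forces $e_0 \cdot \sigma_{KL}(m)$ to arise from an element $m_L \in M(L)$, after possibly enlarging $e_0$ by a further factor in $\cE_K$ inside the colimit to absorb any discrepancy coming from the kernel of $\cEi_{K/L}\mu_L$. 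I define $\tilde{\sigma}_K(m)$ to be the class of $(e_0, (m_L)_L)$ in $\cEi_K \prod_L M(L)$; the colimit identifications make this independent of the choice of fraction representing $\mu_K(m)$ and of the lifts $m_L$.

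Finally I verify (i) that the composition of $\tilde{\sigma}_K$ with the vertical map recovers $(\sigma_{KL}(m))_L$, which is immediate by construction; (ii) transitivity for a longer flag $\Kb \supset \Lb \supset \Mb$, where both the direct lift and the iterated application factor through the flag ending at $1$ and therefore agree by the uniqueness of $\mu_K$; and (iii) naturality in $M$, inherited from the functoriality of $\mu_K$. The main obstacle is the uniformity of the refinement of $e_0$ over the potentially infinite fibre of $\pi$; the bottom-element hypothesis is essential because it produces the coherent $e_0$ from a single expression $\mu_K(m) = n/e_0$ in $\cEi_K M(1)$, without which the per-$L$ denominators would not fit into any single element of $\cEi_K \prod_L M(L)$.
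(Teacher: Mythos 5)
There is a genuine gap at the crucial step. You claim that the compatibility $(\cEi_{K/L}\mu_L)\circ\sigma_{KL}=\mu_K$, together with a single expression $\mu_K(m)=n/e_0$, ``forces $e_0\cdot\sigma_{KL}(m)$ to arise from an element $m_L\in M(L)$.'' It does not. Knowing that $\cEi_{K/L}(\mu_L)(\sigma_{KL}(m))=n/e_0$ in $\cEi_K M(1)$ places no constraint on whether $(e_0)_L\cdot\sigma_{KL}(m)$ lies in the image of $M(L)\to\cEi_{K/L}M(L)$: the map $\mu_L\colon M(L)\to\cEi_L M(1)$ need be neither injective nor surjective, so an equation in $\cEi_K M(1)$ does not ``pull back'' to control integrality in $\cEi_{K/L}M(L)$. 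A concrete failure mode: if $M(1)=0$ (so that all the $\mu$'s are zero and the compatibility is vacuous), then $\mu_K(m)=0/1$ yields $e_0=1$, while $\sigma_{KL}(m)$ can have arbitrary denominators in $\cEi_{K/L}M(L)$ independently for each $L$; with infinitely many $L$ in the fibre of $\pi$, no finite $e\in\cE_K$ clears all of them simultaneously, and the lift does not exist. (Such an $M$ is last-determined, so it satisfies the $qc$ condition literally; the lemma is really being used in the $qce$ setting, but your argument does not invoke extendedness anywhere.)

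Your proposed repair — ``after possibly enlarging $e_0$ by a further factor in $\cE_K$ inside the colimit to absorb any discrepancy coming from the kernel of $\cEi_{K/L}\mu_L$'' — is exactly where the difficulty is concentrated, and you do not resolve it. The enlargement of $e_0$ is a priori $L$-dependent, and since $\cE_K$ is generated by finite products while the fibre of $\pi$ can be infinite, there is no reason a single enlarged $e$ works for all $L$. Your closing paragraph recognizes this as ``the main obstacle'' and then simply reasserts that the bottom element produces a coherent $e_0$; that is circular. The paper's proof avoids the element chase entirely: it builds the lift by a diagram in which the diagonal on $R(K\supset 1)\tensor_{R(K)}M(K)$ is observed to factor through the submodule $R(K\supset 1)\tensor_{R(K)}\prod_L M(K)$ of $\prod_L R(K\supset 1)\tensor_{R(K)}M(K)$ — the uniform denominator is encoded structurally by the single $R(K\supset 1)$ tensor factor rather than extracted pointwise from a fraction $n/e_0$ — and then chases this through a ladder of quasicoherence isomorphisms. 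That is the ingredient your argument is missing.
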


\begin{proof} In the following diagram, $K$ is fixed as $\Lb\subseteq
  \Kb=\pi K$. The subgroups $L$ in the products run through subgroups $L
\subset K$ with $\pi L=\Lb$. Apart from the two diagonal maps, all maps
come by extension of scalars from the structure maps of $M$. The
isomorphisms come from quasicoherence

$$\diagram
R(K\supset 1)\tensor_{R(K)}M(K)\rto \dto^{\Delta} &M(K\supset
1) \dto^{\Delta} \\
\prod_LR(K\supset 1) \tensor_{R(K)} M(K)\rto  \ar[d]^{\cong} &
\prod_LM(K\supset 1) \dto^{\cong} \\
\prod_L R(L\supset 1)\tensor_{R(L)}R(K\supset L)\tensor_{R(K)} M(K) \ddto
&\prod_LR(K\supset L) \tensor_{R(L)}M(L\supset 1) \\
&\prod_L R(K\supset L) \tensor_{R(L)} R(L\supset 1) \tensor_{R(L)} M(L)\uto
\dto^{\cong}\\
\prod_L R(L\supset 1)\tensor_{R(L)} M(K\supset L)
&
\prod_LR(L\supset 1)\tensor_{R(L)}R(K\supset L)\tensor_{R(L)} M(L)\lto^{\cong}
\enddiagram$$
The required lift arises since the top left vertical takes values in
$R(K\supset 1)\tensor_{R(K)}\prod_L
M(K)$. 
\end{proof}

Reassured by the fact $\pi$-structures can arise naturally, we may proceed. 

\begin{lemma}
There is a functor 
$$\pes : \mbox{$qc$-$\pi$-cts-$R^f$-modules}\lra
\mbox{$\pes R^f$-modules}, $$
defined on vertices by 
$$(\pes M)(\Kb)=(\pi_!M)(\Kb)=\prod_{\pi K=\Kb} M(K). $$
\end{lemma}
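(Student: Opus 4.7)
The plan is to take Definition \ref{defn:pishrieke} as the definition of $(\pes M)(\Fb)$ on every flag and then build the structure maps, using the $\pi$-structure exactly when the last vertex of the flag changes. First I would observe that the formula of Definition \ref{defn:pishrieke}, specialized to a length-0 flag $\Fb=(\Kb)$, collapses to $\prod_{\pi K=\Kb}M(K)$, since no Euler classes are inverted; this verifies the stated value on vertices. I would also check (this is essentially tautological from the module structure of $M$ over $R^f$) that for any flag $\Fb$ the abelian group $(\pes M)(\Fb)$ is a module over $(\pes R)(\Fb)$, by performing the products and localizations in parallel on $R$ and on $M$.

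Next I would define the structure map $(\pes M)(a)$ associated to an inclusion $a:\Eb\subseteq \Fb$ of $\Sigmab$-flags. It is enough to treat the case where $\Eb$ is obtained from $\Fb=(\Lb_0\supset\cdots\supset\Lb_t)$ by omitting a single entry $\Lb_j$. For $j<t$ the construction exactly parallels the definition of $(\pes R)(a)$: the value at $\Lb_t$ is unchanged, and one simply applies the sequence of products-and-localizations that was added to the formula, which uses only the $R^f$-module structure on $M$. For $j=t$, the value at the last vertex of the flag changes from $M(L_{t-1})$ to a localized product over $L_t$ with $\pi L_t=\Lb_t$ and $L_t\subset L_{t-1}$, and a structure map at this step is not available from the $R^f$-module structure alone. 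Here I would use the $\pi$-structure in the form given in Definition \ref{defn:pistructure}: the chosen lifts
$$M(L_{t-1})\lra e_{L_{t-1}}M(\Kb\supset\Lb)_c=\cEi_{L_{t-1}}\prod_{\pi L_t=\Lb_t,L_t\subset L_{t-1}}M(L_t),$$
assembled over all $L_{t-1}$ with $\pi L_{t-1}=\Lb_{t-1}$ and then put through the outer products and localizations, provide the required map $(\pes M)(\Eb)\lra (\pes M)(\Fb)$.

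With the structure maps defined, I would verify functoriality. Maps that only add terms in positions $j<t$ compose by straightforward associativity of localization and of products over $\pi$-fibres, so the only non-trivial compositions involve at least one extension of the last term; here transitivity is precisely the transitivity condition in Definition \ref{defn:pistructure}. Quasi-coherence of $M$ enters in ensuring the formula is consistent: because $qc$-modules are last-determined, values on longer flags obtained by further localization of $M(L_t)$ really do represent the module-theoretic extension along the ring maps of $\pes R$, so the structure maps at $j<t$ compose compatibly with those at $j=t$. Finally, a morphism $f:M\lra N$ of $qc$-$\pi$-cts-$R^f$-modules induces $\pes f$ by applying $f$ pointwise and then performing the same products and localizations; compatibility of $f$ with the chosen $\pi$-lifts is exactly what ensures $\pes f$ commutes with the structure maps defined via the $\pi$-structure, giving functoriality in $M$.

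The main obstacle is the $j=t$ case and its interaction with iterated last-term extensions: the naive guess without the $\pi$-structure would be to use an unlifted map $M(L_{t-1})\lra \prod_{\pi L_t=\Lb_t, L_t\subset L_{t-1}}\cEi_{L_{t-1}/L_t}M(L_t)$, but this does not factor through the \emph{outer} localization $\cEi_{L_{t-1}}$ of the product required by $\pes M$. The $\pi$-continuity hypothesis is engineered precisely to produce a coherent system of such factorizations; once it is in place, verifying that the pieces glue into a functor is a matter of bookkeeping with the cleavage of Lemma \ref{lem:cleavage}.
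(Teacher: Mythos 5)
Your proposal is correct and follows the same approach as the paper's own proof: use the formula of Definition \ref{defn:pishrieke} to define $(\pes M)(\Fb)$ on all flags, reduce to single-vertex omissions, and invoke the $\pi$-structure precisely for the $j=t$ case (where the last term changes). Your extra remarks on transitivity of the $\pi$-structure guaranteeing functoriality, and on why the unlifted map fails to factor through the outer localization, are sound elaborations of the same argument rather than a different route.
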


\begin{proof}
Since $qc$-modules are last-determined, it is reasonable to 
 extend the definition on vertices to the entire flag complex by
concentrating on the last term in the flag; 
Thus if $\Fb=(\Lb_0\supset \cdots \supset
\Lb_t)$, we take
$$(\pes M)(\Fb)=\prod_{\pi (L_0)=\Lb_0} \cEi_{L_0} \prod_{\pi (L_1)=\Lb_1,
L_1 \subseteq L_0} \cdots \cEi_{L_{t-1}}\prod_{\pi(L_t)=\Lb_t,
L_t\subseteq L_{t-1}}  M (L_t).$$
This is evidently a module over 
$$(\pes R)(\Fb)=\prod_{\pi (L_0)=\Lb_0} \cEi_{L_0} \prod_{\pi (L_1)=\Lb_1,
L_1 \subseteq L_0} \cdots \cEi_{L_{t-1}}\prod_{\pi(L_t)=\Lb_t,
L_t\subseteq L_{t-1}}  R (L_t).$$
If we have an inclusion $i: \Eb\lra  \Fb$ of flags we need to describe the induced map 
$(\pes M)(i): (\pes M)(\Eb)\lra (\pes M)(\Fb)$. It suffices to do this when $\Eb$ is obtained
by omitting one factor, so we suppose $\Fb=(\Lb_0\supset \cdots \supset
\Lb_t)$ and that $\Eb$ omits $\Lb_i$. 

If $i=t$ then we first describe $(\pes M)(\Lb_{t-1})\lra (\pes M)(\Lb_{t-1}\supset
\Lb_t)$. This is 
$$\prod_{\pi L_{t-1}=L_{t-1}'}M(L_{t-1}) \lra \prod_{\pi L_{t-1}=L_{t-1}'}\cEi_{L_{t-1}}
\prod_{\pi L_t=\Lb_t, L_t\subset L_{t-1}}M(L_t), $$
and is a product of factors indexed by $L_{t-1}$ with
$\pi L_{t-1}=\Lb_{t-1}$. The $L_{t-1}$ factor is the map
$$M(L_{t-1}) \lra \cEi_{L_{t-1}}\prod_{\pi L_t=\Lb_t, L_t\subset
 L_{t-1}}M(L_t)$$
given by the $\pi$-structure.

To obtain the map for the full length flags $\Eb\lra \Fb$ we apply the sequence of
localizations and products to the above shortened flags. 

If $i<t$   then we apply an operation to $M^{\dagger}=(\pes M)(\Lb_{i+1}\supset \cdots
\supset \Lb_t)$. Indeed, adding $\Lb_i$ on the codomain we have
$$M^{\dagger}\lra \prod_{\pi L_i=\Lb_i}\cEi_{L_i}M^{\dagger}=(\pes M)(\Lb_i\supset \Lb_{i+1}\supset
\cdots \supset \Lb_t)$$
where we use the map whose components are the localizations. 
To obtain the map for $\Eb\lra \Fb$ we apply the sequence of
localizations and products to each term.
\end{proof}

\begin{remark}
\label{rem:pesqc}
The functor $\pes$ takes values in the $\pes R$-modules $\Mb$ which are
themselves $i$-quasi-coherent in the sense that $e\Mb$  is quasi-coherent
($i$ for idempotent). 

Indeed, 
$$(e\pes M) (F)=e_F(\pes M)(\pi F)=\cEi_{L_0/L_1}\cEi_{L_1/L_2}\ldots
\cEi_{L_{t-1}/L_t}M(L_t)=\cEi_{L_0/L_t}M(L_t)=M(F). $$
We will seek a right adjoint on the restricted category of
$i$-quasi-coherent modules. 
\end{remark}

\subsection{An Euler adapted right adjoint extending modules from $\flag (\Sigma)$ to $\flag (\Sigmab)$}

We are ready to explain the universal property of $\pes$. 

\begin{lemma}
\label{lem:pshriek}
There is an adjunction
$$\adjunction{e}{\mbox{$iqc$-$\pes R^f$-modules}}
{\mbox{$qc$-$\pi$-cts-$R^f$-modules}}{\pes} $$
\end{lemma}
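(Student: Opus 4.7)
The plan is to construct a unit $\eta: \mathrm{id} \Rightarrow \pes\circ e$ and counit $\epsilon: e\circ\pes \Rightarrow \mathrm{id}$ and verify the triangle identities. First I would check that the functors take values in the stated subcategories. That $\pes M$ is $iqc$ is Remark \ref{rem:pesqc}. For $e\Mb$, the $qc$ property is exactly the $iqc$ hypothesis on $\Mb$, while the $\pi$-structure on $e\Mb$ is produced from $\Mb$ itself: for $\Kb \supseteq \Lb$ and $K$ with $\pi K = \Kb$, Lemma \ref{lem:idempotents} and the definition of $\pes R^f$ yield $e_K(\pes R)(\Kb \supseteq \Lb) = \cEi_K \prod_{\pi L = \Lb,\, L\subset K} R(L)$, so applying the idempotent $e_K$ to the structure map $\Mb(\Kb) \to \Mb(\Kb\supseteq \Lb)$ provides the required lift $(e\Mb)(K) \to \cEi_K \prod_{\pi L = \Lb,\, L\subset K}(e\Mb)(L)$. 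Transitivity of these lifts reduces to transitivity of the structure maps of $\Mb$ as a $\pes R^f$-module.

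The counit $\epsilon_M : e\pes M \to M$ is taken to be the identity. By Remark \ref{rem:pesqc}, for any flag $F = (L_0 \supset \cdots \supset L_t)$ we have $(e\pes M)(F) = \cEi_{L_0/L_t}M(L_t) = M(F)$, with the last equality by quasicoherence of $M$; the equalities of structure maps follow from the definitions.

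The unit $\eta_\Mb : \Mb \to \pes e\Mb$ is defined flag-by-flag. For a flag $\Fb = (\Lb_0 \supset \cdots \supset \Lb_t)$, an element $m \in \Mb(\Fb)$ is sent to the tuple whose component at a lift $F = (L_0 \supset \cdots \supset L_t)$ with $\pi F = \Fb$ is $e_F\cdot m$, viewed in $e_F\Mb(\Fb) = (e\Mb)(F) = \cEi_{L_0/L_t}e_{L_t}\Mb(\Lb_t)$ using Lemma \ref{lem:idempotents} and quasicoherence of $e\Mb$. The nested-product continuity of $(\pes e\Mb)(\Fb)$ is automatic: a single element $m$ furnishes a uniform representative across all chains sharing a common prefix, which is precisely the continuity encoded in the formula for $(\pes R)(\Fb)$. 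Naturality with respect to inclusions of $\Sigmab$-flags reduces to the compatibility of the horizontal and vertical structure maps of $\Mb$ over $\pes R^f$.

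With $\eta$ and $\epsilon$ in place the triangle identities are direct: applying $e$ to $\eta_\Mb$ picks out the $F$-idempotent summand, which is $(e\Mb)(F)$, so $\epsilon_{e\Mb}\circ e\eta_\Mb = \mathrm{id}_{e\Mb}$; dually $\pes\epsilon_M \circ \eta_{\pes M} = \mathrm{id}_{\pes M}$ because $\pes M$ is recovered from its idempotent components and $\epsilon$ is the identity. I expect the main obstacle to be the verification that the tuple $(e_F\cdot m)_F$ really does satisfy the nested localization/continuity structure defining $(\pes e\Mb)(\Fb)$ and that the resulting maps commute with both horizontal and vertical structure maps of $\Mb$. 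Both amount to careful accounting with the idempotents $e_F$, the localizations $\cEi_{L_0/L_t}$, and the transitivity of the system of Euler classes, all already established in Section \ref{sec:Euleradapted}.
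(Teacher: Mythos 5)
Your proposal is correct and follows essentially the same route as the paper: the counit $e\pes M \to M$ is the identity by Remark \ref{rem:pesqc}, the unit $\Mb \to \pes e\Mb$ is the canonical map built from idempotents and the $iqc$ hypothesis (the paper exhibits it as a three-step composite through $\prod_{\pi F = \Fb} e_F\Mb(\Fb)$, which is your $(e_F\cdot m)_F$ description), and the triangle identities reduce to idempotent bookkeeping. Where you go slightly beyond the paper is in explicitly arguing that $e\Mb$ inherits a $\pi$-structure, which the paper leaves tacit; your proposed lift via $e_K$ applied to the vertical structure map of $\Mb$ is the right idea, but note that its target is a priori $e_K\Mb(\Kb\supseteq\Lb)$, a module over $\cEi_K\prod_L R(L)$, and one still needs to say why this maps compatibly into $\cEi_K\prod_L(e\Mb)(L)$ rather than only into $\prod_L\cEi_{K/L}(e\Mb)(L)$ — that is the same continuity issue you flag for the unit, and it deserves a sentence rather than being absorbed into ``careful accounting.''
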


\begin{proof}
The counit $e\pes M\lra M$ is described and seen to be an isomorphism
in Remark \ref{rem:pesqc}. 

On the other hand we obtain a natural map $\Mb\lra \pes e\Mb$ which at $\Fb$ is 
\begin{multline*}
\Mb (\Fb)\lra \prod_{\pi F=\Fb}e_F\Mb(\Fb)  \lra \prod_{\pi F=\Fb}e_FR(F)\tensor_{R(\Lb_t)} \Mb(\Lb_t) \\
\lra \prod_{\pi (L_0)=\Lb_0} \cEi_{L_0} \prod_{\pi (L_1)=\Lb_1,
L_1 \subseteq L_0} \cdots \cEi_{L_{t-1}}\prod_{\pi(L_t)=\Lb_t,
L_t\subseteq L_{t-1}}  e_{L_t}\Mb (\Lb_t)\\
=(\pes e\Mb)(\Fb)
\end{multline*}

These together satisfy the triangular identities and give an
adjunction. 
\end{proof}



\subsection{$p$-modules}

The adjunction in Lemma \ref{lem:pshriek} shows that the category of
$qc$-$\pi$-cts-$R^f$-modules is a retract of the category of 
$qc$-$R^f$-modules.

\begin{defn}
We say that a module over $\pes R$ is a pqc-module if
the natural map $\Mb \lra \pes e \Mb$  made explicit in Lemma
 \ref{lem:pshriek} gives an isomorphism
$$\Mb (\Fb)\cong \prod_{\pi (L_0)=\Lb_0} \cEi_{L_0} \prod_{\pi (L_1)=\Lb_1,
L_1 \subseteq L_0} \cdots \cEi_{L_{t-1}}\prod_{\pi(L_t)=\Lb_t,
L_t\subseteq L_{t-1}}  e_{L_t}\Mb (\Lb_t)$$
\end{defn}

\begin{remark}
(i) For length 0 flags, the condition states that the values on
are simply the products of the values of $e\Mb$ on the length 0 flags of
$\Sigma$:
$$\Mb(\Kb)=\prod_{\pi K=\Kb} e_K\Mb(\Kb). $$
This accounts for the letter $p$ in $pqc$.

(ii) We have already noted that $\pes e\Mb$ is an $iqc$-module. 
This means that  if $\Eb$ and $\Fb$ have the same last terms, and $E\leq F$ with 
$\pi E=\Eb, \pi F=\Fb$ then for any $pqc$-module $\Mb$, writing $f$
for the first (largest) term in a flag, we have 
$$e_F \Mb(\Fb)=\cEi_{fF/fE}e_E\Mb(\Eb). $$
This accounts for the letters $qc$ in $pqc$.
\end{remark}

\begin{defn}
We say that a $\pes R$-module $\Mb$  is {\em $i$-extended} (or an
$ie$-module) if $e\Mb$ is extended as an $R$-module. 

More explicitly, the inclusion of the flag  $\Eb$ into  $\Fb$ induces $\Mb(\Eb)\lra \Mb(\Fb)$ and hence 
$R(\Fb)\tensor_{R(\Eb)} \Mb(\Eb)\lra \Mb(\Fb)$. If $\Mb$ is extended and we take
$\Eb=(\Kb), \Fb=(\Kb\supset \Lb)$ and choose $K\supset L$ with $\pi K=\Kb,
pL=\Lb$ then we obtain an isomorphism 
$$e_{K\supset L} \Mb(\Kb\supset \Lb)= R(K\supset L) \tensor_{R(K)} e_{K}\Mb(\Kb). $$

For brevity we write $pqc$ of product modules which are $i-qc$, and
$pqce$ for product modules that are $i-qce$, since in the presence of
the $p$ condition, the $i$ requirement is the only appropriate choice. 
\end{defn}

\begin{cor}
\label{cor:RmodpesRmod}
The adjunction of Lemma \ref{lem:pshriek}
gives an equivalence
$$\mbox{$pqc$-$\pes R^f$-modules} \simeq
\mbox{$qc$-$\pi$-cts-$R^f$-modules} , $$
and this restricts to an equivalence
$$\mbox{$pqce$-$\pes R^f$-modules} \simeq
\mbox{$qce$-$\pi$-cts-$R^f$-modules} .\qqed $$
\end{cor}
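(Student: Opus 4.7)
The corollary is essentially a formal consequence of Lemma \ref{lem:pshriek} combined with the definitions of $pqc$ and $pqce$, and my plan is to deduce it in two short steps.

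First, I would observe that Remark \ref{rem:pesqc} already gives that the counit $e\pes M \lra M$ is an isomorphism for every $qc$-$\pi$-cts-$R^f$-module $M$. A standard adjunction argument (the subcategory on which the unit, respectively the counit, of an adjunction is an isomorphism gives an equivalence) then says that $(e, \pes)$ restricts to an equivalence between $qc$-$\pi$-cts-$R^f$-modules and the full subcategory of $iqc$-$\pes R^f$-modules on which the unit $\Mb \lra \pes e\Mb$ is an isomorphism. I would then match this subcategory with the $pqc$-modules by comparing the formula for $\pes e \Mb$ produced in Lemma \ref{lem:pshriek} with the formula defining $pqc$: both display $\Mb(\Fb)$ as the iterated localised product $\prod_{\pi L_0 = \Lb_0} \cEi_{L_0} \cdots \prod_{\pi L_t = \Lb_t,\, L_t \subseteq L_{t-1}} e_{L_t}\Mb(\Lb_t)$. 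Since a $pqc$-module is by definition $iqc$ (the length-$0$ component of the defining isomorphism says $e\Mb$ is quasicoherent), the two subcategories coincide. This gives the first equivalence.

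For the restriction to the $pqce$/$qce$ equivalence I would use that by definition a $pqc$-module $\Mb$ is $pqce$ iff $e\Mb$ is extended as an $R^f$-module, which is precisely the condition that the corresponding object on the right-hand side of the first equivalence lies in $qce$-$\pi$-cts-$R^f$-modules. Hence the equivalence of subcategories descends on the nose. Conversely, starting from a $qce$-$\pi$-cts-module $M$, the object $\pes M$ is $pqc$ by the first equivalence and satisfies $e\pes M \cong M$ qce, so $\pes M$ is $pqce$.

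The only real bookkeeping step — which I do not expect to be a genuine obstacle but would write out carefully — is the formula matching in the first step: one needs to check that the unit of Lemma \ref{lem:pshriek} really agrees termwise with the natural map whose invertibility defines the $pqc$ condition. This reduces to tracing the three component maps in the proof of Lemma \ref{lem:pshriek} (projection onto the idempotent summand, quasicoherence of $M$ in the last variable, and assembly into the localised product governed by the $\pi$-structure) and observing they recombine to the claimed formula. Once that identification is in place, nothing further beyond the standard adjunction/restriction lemma is required.
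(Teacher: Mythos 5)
Your proposal is correct and follows the route the paper treats as immediate (the corollary is stated with only a closing $\square$, indicating the author regards it as a formal consequence of Lemma~\ref{lem:pshriek} together with the definitions of $pqc$ and $pqce$). Your two steps --- identifying the subcategory on which the unit is an isomorphism with the $pqc$-modules via the explicit formula, and then passing to the extended case --- are precisely the bookkeeping the paper is eliding.
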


\section{Euler-adapted change of poset for pair systems}
\label{sec:Euleradaptedpairs}

The purpose of this section is to record the Euler-adapted change of
poset for systems of pairs. The proofs are essentially specializations of
those  for flags, so we will not give full details.

\subsection{The Euler-adapted construction}

As before the Euler-adapted construction is a product on vertices. 

\begin{defn}
We define a coefficient system $\pes R$ on $\qp (\Sigmab)$ as
follows:
$$(\pes R)(\Kb\supseteq \Lb)=\prod_{\pi (K)=\Kb} \cEi_{K} \prod_{\pi (L)=\Lb,
L \subseteq K}   R(G/L).$$

We need to describe the induced maps, and it suffices to do this for the horizontal and vertical cases. 
If we have $\Hb\supseteq \Kb \supseteq \Lb$, we have the horizontal
inclusion  $h: (\Kb\supseteq \Lb ) \lra (\Hb\supseteq \Lb)$ and the vertical 
inclusion  $v: (\Hb\supseteq \Kb ) \lra (\Hb\supseteq \Lb)$. 

Starting  with the horizontal map, $(\pes R)(h): (\pes R)(\Kb\supseteq
\Lb)\lra (\pes R)(\Hb\supseteq \Lb)$, we need to define 
$$\prod_{\pi (K)=\Kb} \cEi_{K} \prod_{\pi (L)=\Lb,
L \subseteq K}   R(G/L)\lra  \prod_{\pi (H)=\Hb} \cEi_{H} \prod_{\pi (L)=\Lb,
L \subseteq H}   R(G/L).$$
Using Lemma \ref{lem:cleavage}, for each  particular $K$ with $\pi K =\Kb$, there is a unique $H$
with $\pi H = \Hb$ and $H\supseteq K$. Accordingly we may take a
product indexed by $K$ of maps 
$$\cEi_{K} \prod_{\pi (L)=\Lb,
L \subseteq K}   R(G/L)\lra  \cEi_{H} \prod_{\pi (L)=\Lb,
L \subseteq H}   R(G/L); $$
this includes the smaller product (over $L\subseteq K$) in the larger one
(over $L\subseteq H$) and localizes. 

Moving on to the vertical map, $(\pes R)(v): (\pes R)(H\supseteq
K)\lra (\pes R)(H\supseteq L)$, we need to define 
$$\prod_{\pi (H)=\Hb} \cEi_{H} \prod_{\pi (K)=\Kb,
K\subseteq H}   R(G/K)\lra  \prod_{\pi (H)=\Hb} \cEi_{H} \prod_{\pi (L)=\Lb,
L \subseteq H}   R(G/L).$$
This is a product over $H$ of localizations of 
$$\prod_{\pi (K)=\Kb,
K\subseteq H}   R(G/K)\lra \prod_{\pi (L)=\Lb,
L \subseteq H}   R(G/L).$$
Using Lemma \ref{lem:cleavage}, for each  particular $L$ with $\pi L =\Lb$, there is a unique $K$
with $\pi K = \Kb$ and $L\supseteq K$. Accordingly we may take a
product indexed by $K$ of maps 
$$  R(G/K)\lra  \prod_{\pi (L)=\Lb,
L \subseteq H}   R(G/L),  $$
whose components are inflations. 
\end{defn}

\subsection{Relationship between $\pi_!$ and $\pes$}
The following three lemmas are precisely like the case of flags. 

\begin{lemma}
\label{lem:pshrieksp} 
There is a map $\pes R \lra \pi_!R$ of coefficient systems on $\qp
(\Sigmab)$ which is the identity on flags of length 0. \qqed
\end{lemma}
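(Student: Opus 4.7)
My plan is to specialize the flag-case construction of Lemma \ref{lem:pshrieks} to the pair setting. The key first step is to reindex the target using the cleavage property (Lemma \ref{lem:cleavage}): since for each $L$ with $\pi L = \Lb$ there is a unique $K \supseteq L$ with $\pi K = \Kb$, I can rewrite
\[
(\pi_! R)(\Kb \supseteq \Lb) = \prod_{\pi K = \Kb,\, \pi L = \Lb,\, K \supseteq L} \cEi_{K/L} R(G/L) \cong \prod_{\pi K = \Kb} \prod_{L \subseteq K,\, \pi L = \Lb} \cEi_{K/L} R(G/L).
\]
This outer product structure matches that of $(\pes R)(\Kb \supseteq \Lb) = \prod_{\pi K = \Kb} \cEi_K \prod_{L \subseteq K,\, \pi L = \Lb} R(G/L)$, reducing the construction to a map on each $K$-factor.

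For fixed $K$ over $\Kb$, I want to define
\[
\cEi_K \prod_{L \subseteq K,\, \pi L = \Lb} R(G/L) \longrightarrow \prod_{L \subseteq K,\, \pi L = \Lb} \cEi_{K/L} R(G/L)
\]
by the universal property of localization, starting from the componentwise canonical maps $R(G/L) \to \cEi_{K/L} R(G/L)$. The nontrivial verification is that every $e \in \cE_K$ acts as a unit on the target. Using the maximally generated hypothesis, I write $e$ as a product of inflations of generators $e_i^H$ coming from maximal $H$ with $K \not\subseteq H$. Inflating further to $R(G/L)$, only the factors with $H \supseteq L$ contribute (by the $e_L$-convention used in the definition of $\cEi_K$ on products), and each such factor $\infl_{G/H}^{G/L} e_i^H$ lies in $\cE_{K/L} = \langle \infl_{G/H}^{G/L} e_i^H : L \subseteq H,\, K \not\subseteq H\rangle$, so becomes a unit in $\cEi_{K/L} R(G/L)$. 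Taking the product over $K$ produces the map on objects.

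To promote this to a morphism of coefficient systems on $\qp(\Sigmab)$, I verify compatibility with the horizontal and vertical structure maps from the definitions of $\pes R$ and $\pi_! R$ in Section \ref{sec:Euleradaptedpairs}. For a horizontal map $h:(\Kb \supseteq \Lb) \to (\Hb \supseteq \Lb)$, both sides enlarge the $L$-indexing via cleavage (from $L \subseteq K$ to $L \subseteq H$) and localize at the additional Euler classes of $\cE_H$ beyond $\cE_K$; universality of localization gives commutativity of the comparison square. For a vertical map $v:(\Hb \supseteq \Kb) \to (\Hb \supseteq \Lb)$, both sides are assembled from componentwise inflations $R(G/K) \to R(G/L)$ along the unique cleavage lifts, and commutativity is then componentwise. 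The final assertion that the map is the identity at the most degenerate level of pairs is the direct analog, after the cleavage reindexing, of the flag-length-zero statement in Lemma \ref{lem:pshrieks}. The main bookkeeping obstacle is the $e_L$-convention, ensuring that inflated factors of $\cE_K$ land in the correct generators of $\cE_{K/L}$; once that is verified, the remainder is a routine diagram chase specializing the flag argument.
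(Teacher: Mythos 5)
Your proposal is correct and follows essentially the same route as the paper, which omits the proof but indicates that it specializes the flag-case argument (Lemma~\ref{lem:pshrieks}): rewrite $\pi_!R$ as a nested product and obtain the map by the universal property of localization, noting that the relevant factors of each $e \in \cE_K$ become units in $\cEi_{K/L}R(G/L)$ because $\cE_{K/L}$ is generated by the inflated generators from maximal $H \supseteq L$ with $K \not\subseteq H$. One small inaccuracy: the initial regrouping
\[
\prod_{\pi K = \Kb,\, \pi L = \Lb,\, K \supseteq L} \cEi_{K/L}R(G/L) \;\cong\; \prod_{\pi K = \Kb}\;\prod_{L \subseteq K,\ \pi L = \Lb}\cEi_{K/L}R(G/L)
\]
is an elementary rebracketing of a product over a set of pairs and does not invoke the cleavage of Lemma~\ref{lem:cleavage} (which is used instead to define the structure maps, as you correctly do in the naturality check).
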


\begin{lemma}
\label{lem:idempotents}
The map of Lemma \ref{lem:pshrieksp} is compatible with idempotents;
indeed $e_{K\supseteq L}(\pi_!R)(\Kb\supseteq \Lb) =R(K\supseteq
L)=e_{K\supseteq L}(\pes
R)(\Kb\supseteq \Lb)$ so that applying $e$ to 
$\pes R \lra \pi_! R$ we obtain the identity. \qqed 
\end{lemma}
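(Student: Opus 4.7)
The plan is to unpack both product decompositions at a typical pair $(\Kb\supseteq\Lb)$ and verify that the idempotent cut-offs on each side land on the same ring $R(K\supseteq L)=\cEi_{K/L}R(G/L)$, and that the natural map from Lemma \ref{lem:pshrieksp} restricts on those summands to the identity.

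First I would fix $(\Kb\supseteq\Lb)\in\qp(\Sigmab)$ together with a lift $(K\supseteq L)$ satisfying $\pi K=\Kb$, $\pi L=\Lb$. By the pair-version of Definition \ref{defn:pishriek}, we have
\[
(\pi_!R)(\Kb\supseteq\Lb) \;=\; \prod_{\substack{\pi K'=\Kb,\;\pi L'=\Lb\\ L'\subseteq K'}} R(K'\supseteq L'),
\]
and $e_{K\supseteq L}$ is by definition the projection onto the $(K,L)$-factor, which is $R(K\supseteq L)=\cEi_{K/L}R(G/L)$. This side is essentially bookkeeping.

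Next I would analyze $(\pes R)(\Kb\supseteq\Lb)=\prod_{\pi K'=\Kb}\cEi_{K'}\prod_{\pi L'=\Lb,\,L'\subseteq K'}R(G/L')$. The idempotent $e_{K\supseteq L}$ selects first the outer $K$-factor, then within the inner product the $L$-component. The only nontrivial point is that the localization at $\cE_{K}$ sits between these two projections. Since $\cE_{K}$ acts diagonally on the inner product (via its component maps $e_{L'}$ in the sense of Section \ref{sec:Euleradapted}), and since localization is exact and commutes with projections onto summands, applying $e_{K\supseteq L}$ to the full expression yields $\cEi_{K}R(G/L)$. Under the identification of maximally generated Euler classes from Section \ref{sec:Euleradapted}, inverting $\cE_{K}$ on $R(G/L)$ inverts exactly $\cE_{K/L}$, so this equals $\cEi_{K/L}R(G/L)=R(K\supseteq L)$, matching the $\pi_!$ side.

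Finally, I would verify that the map $\pes R\to\pi_!R$ of Lemma \ref{lem:pshrieksp} is component-wise the identity on these idempotent summands. This map is constructed by the universal property of localization, so on each $(K,L)$-piece it is the canonical map $\cEi_{K/L}R(G/L)\to\cEi_{K/L}R(G/L)$, i.e.\ the identity; compatibility with the horizontal and vertical structure maps is immediate from the fact that both coefficient systems give the original pair structure on those summands. Applying the functor $e$ to the morphism $\pes R\to\pi_!R$ therefore produces the identity natural transformation of $\qp(\Sigma)$-diagrams $R^p\to R^p$, as required.

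The only real obstacle is the interchange of the outer $\cE_K$-localization with the projection onto the $L$-component of the inner product; once one notes that the $\cE_K$-action is diagonal and the idempotents live at the level of the unlocalized product, everything else is formal.
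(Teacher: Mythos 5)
Your proof is correct and fills in a detail the paper leaves implicit (the paper states the lemma with no proof, referring to the flags case, which is itself stated without proof). Your key moves are exactly what the paper's surrounding remark ``the idempotents came from an unlocalized product'' is gesturing at: the idempotent $e_{K\supseteq L}$ already lives in the finite stage $\prod_{\pi K'=\Kb}\prod_{\pi L'=\Lb,\,L'\subseteq K'}R(G/L')$ before localization, multiplication by an idempotent is exact and commutes with the filtered colimit defining $\cEi_{K}\prod_{L'}(-)$, and as $e$ ranges over $\cE_K$ the component elements $e_L$ acting on the retained $L$-factor sweep out precisely $\cE_{K/L}$ (by the maximal-generation hypothesis), giving $\cEi_{K/L}R(G/L)=R(K\supseteq L)$ on both sides, with the comparison map from Lemma \ref{lem:pshrieksp} restricting to the identity by the universal property of localization.

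One small stylistic caveat: saying ``$\cE_K$ acts diagonally'' invites misreading, since the acting element on the $L'$-factor is the subproduct $e_{L'}$ of $e$, which genuinely varies with $L'$ (it is only those Euler-class factors inflated from maximal $H_i\supset L'$). Your parenthetical aside acknowledges this, and the argument is unaffected, but it would be clearer to say the element $e\in\cE_K$ acts compatibly on the whole product via its $L'$-indexed components $e_{L'}$, so that projection onto a single factor yields the colimit along the $e_L$'s, i.e.\ localization at $\cE_{K/L}$.
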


\begin{lemma} 
Extending scalars to $\pi_!R$ and applying idempotents gives 
a functor 
$$e: \mbox{$\pes R^p$-modules}\lra \mbox{$R^p$-modules}.$$ 
given  by 
$$(e\Mb)(K\supseteq L) =e_{K\supseteq L}\left[ \Mb(\pi (K\supseteq L)) \right],   $$ 
where $e_{K\supseteq L}\in R(\pi (K\supseteq L))$ is the idempotent corresponding to
$(K\supseteq L)$. \qqed  
\end{lemma}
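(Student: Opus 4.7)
The plan is to mirror the argument of Lemma \ref{lem:defe} in the flag setting, specialized to the category of pairs. Given a $\pes R^p$-module $\Mb$ and a pair $(K\supseteq L)$ in $\qp(\Sigma)$, set
$$(e\Mb)(K\supseteq L) = e_{K\supseteq L}\bigl[\Mb(\Kb\supseteq \Lb)\bigr],$$
where $\Kb = \pi K$, $\Lb = \pi L$. By the preceding Lemma \ref{lem:idempotents}, $e_{K\supseteq L}(\pes R)(\Kb\supseteq \Lb) = R(K\supseteq L)$, so this value is automatically an $R(K\supseteq L)$-module. The only real content is to produce horizontal and vertical structure maps compatible with idempotents and to check they compose correctly.

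For a horizontal inclusion $h:(K\supseteq L)\lra (H\supseteq L)$ with $H\supseteq K\supseteq L$, I would apply $\Mb$ to the corresponding horizontal $(\Kb\supseteq \Lb)\lra (\Hb\supseteq \Lb)$ and restrict to idempotent summands. The point to verify is that the image of $e_{K\supseteq L}$ under the ring map $(\pi_!R)(\Kb\supseteq \Lb)\lra (\pi_!R)(\Hb\supseteq \Lb)$ is refined by $e_{H\supseteq L}$; since $e_{K\supseteq L}$ picks out the $K$-component of the outer product and since by Lemma \ref{lem:cleavage} there is a unique $H'$ with $\pi H' = \Hb$ and $H'\supseteq K$, namely $H'=H$, the image lands precisely in the $H$-component, where $e_{H\supseteq L}$ also lives. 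A dual argument using the cleavage on the inner factor handles the vertical map $v:(H\supseteq K)\lra (H\supseteq L)$: $e_{H\supseteq K}$ and $e_{H\supseteq L}$ both lie in the $H$-component of the outer product, and on the inner factor the unique lift of $\Lb\subset\Kb$ containing a chosen $L$ with $\pi L=\Lb$ picks out a unique $K$ with $\pi K=\Kb$, which is exactly the indexing of $e_{H\supseteq K}$.

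Functoriality follows from the observation that every morphism in $\qp(\Sigma)$ factors uniquely as a horizontal followed by a vertical (or vice versa), and that the analogous factorization in $\qp(\Sigmab)$ is respected by $\Mb$; multiplicativity of idempotents in the product rings ensures that the horizontal-then-vertical idempotent equals the vertical-then-horizontal idempotent, so the two compositions agree. Naturality in $\Mb$ is immediate because extension of scalars along $\pes R \lra \pi_! R$ and selection of an idempotent summand are both functorial.

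The main obstacle is purely bookkeeping: one must track which factor of the outer product $\prod_{\pi K'=\Kb}(\cdots)$ and which factor of the inner product $\prod_{\pi L'=\Lb,\, L'\subseteq K'}(\cdots)$ each idempotent sits in, and use the uniqueness clause of the cleavage (Lemma \ref{lem:cleavage}) to match them up under horizontal and vertical refinements. There is no new ingredient beyond the flag case, and in fact one could alternatively deduce the lemma by embedding $\qp(\Sigma)$ into $\flag(\Sigma)$ via the functor $f$ of Lemma \ref{lem:QPisflag} and restricting the flag-level $e$ constructed in Lemma \ref{lem:defe}.
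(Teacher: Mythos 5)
Your proposal correctly fills in the routine argument that the paper leaves implicit (the paper's proof of this statement is empty, deferring to the flag case in Section \ref{sec:Euleradapted}, which is itself left unproved and modeled on Lemma \ref{lem:defe}); constructing the structure maps via the horizontal/vertical factorization in $\qp(\Sigma)$, invoking the idempotent compatibility of Lemma \ref{lem:idempotents}, and tracking components through the cleavage of Lemma \ref{lem:cleavage} is exactly what is intended. One caveat about your closing side-remark: deducing the lemma from the flag-level functor via $f$ of Lemma \ref{lem:QPisflag} is less direct than you suggest, since $\pes R^f$ is generally \emph{not} middle-independent (the maps $\partial_i F \subseteq F$ do not induce isomorphisms on it, as noted in the remark following its definition), so there is no immediate identification of $\pes R^p$-modules with middle-independent $\pes R^f$-modules, and the direct argument you give in the body of the proof is the right route.
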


\subsection{Relative continuity}
The relative continuity condition for coefficient systems is already
formulated for pairs, so we refer the reader to Subsection
\ref{subsec:relctty}.

\begin{lemma}
There is a functor 
$$\pes : \mbox{$qc$-$\pi$-cts-$R^p$-modules}\lra
\mbox{$\pes R^a$-modules}, $$
defined on vertices by 
$$(\pes M)(\Kb)=(\pi_!M)(\Kb)=\prod_{\pi K=\Kb} M(K). $$
\end{lemma}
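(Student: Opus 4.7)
Since this lemma is the pair analogue of the flag version, the strategy is to specialize the construction of $\pes$ on flags. The inputs we use are: the cleavage property of $\pi$ (Lemma \ref{lem:cleavage}), the quasi-coherence of $M$ to handle horizontal maps, and the $\pi$-structure (Definition \ref{defn:pistructure}) to handle vertical maps.

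First I would set the value on vertices as stipulated and then extend to arbitrary pairs by
$$(\pes M)(\Kb \supseteq \Lb) = \prod_{\pi K = \Kb} \cEi_K \prod_{\pi L = \Lb,\, L \subseteq K} M(L),$$
which manifestly carries a $(\pes R)(\Kb\supseteq \Lb)$-module structure termwise. Next I would define the horizontal and vertical structure maps. For a horizontal map $h:(\Kb\supseteq \Lb) \to (\Hb\supseteq \Lb)$, the cleavage of Lemma \ref{lem:cleavage} associates to each $K$ with $\pi K = \Kb$ a unique $H \supseteq K$ with $\pi H = \Hb$, so we may take the product over $K$ of the maps that enlarge the inner product from subgroups $L \subseteq K$ to subgroups $L\subseteq H$ (filling in the extra factors using the structure maps of $M$) and then invert the larger multiplicative set $\cE_H$. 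For a vertical map $v:(\Hb\supseteq \Kb)\to (\Hb\supseteq \Lb)$, I would take a product over $H$ with $\pi H=\Hb$ of maps
$$\cEi_H \prod_{\pi K=\Kb,\,K\subseteq H} M(K) \lra \cEi_H \prod_{\pi L=\Lb,\,L\subseteq H} M(L),$$
which is exactly what the $\pi$-structure provides in its formulation with $e_K M(\Kb\supseteq \Lb)_c$: the lift $M(K)\to \cEi_K \prod_{\pi L=\Lb,\,L\subset K} M(L)$ is precisely what is needed to refine a product indexed by $K$-classes into one indexed by $L$-classes, after localizing at $\cE_H$.

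The main obstacle is verifying that these maps assemble into a bona fide functor, namely that the horizontal-vertical square for $\Hb\supseteq \Kb\supseteq \Lb$ commutes, and that compositions of horizontals (resp.\ verticals) agree. The horizontal composition is straightforward from the transitivity of the cleavage (two successive unique lifts agree with the direct lift) and from the transitivity of the Euler localizations. The vertical composition is the delicate one: it is exactly the transitivity clause in the definition of a $\pi$-structure, which says that the chosen liftings compose correctly, so this holds by hypothesis on $M$. The mixed horizontal-vertical square reduces, after projecting to a single $H$-factor on the target, to a compatibility between the inclusion of $K$-indexed products into $H$-indexed products and the $\pi$-structure lift; this commutes because the $\pi$-structure is natural in its arguments and the cleavage gives compatible identifications between subgroups over $\Kb$ inside those over $\Hb$.

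Finally I would check that the construction is functorial in $M$: a morphism of $qc$-$\pi$-cts-$R^p$-modules is by definition compatible with both quasi-coherence (so it commutes with horizontal structure) and with the chosen $\pi$-structure lifts (so it commutes with vertical structure), whence $\pes$ is well-defined on morphisms. All of these checks are formally identical to the flag case carried out earlier, which is why no further detail is required.
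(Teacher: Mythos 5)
Your proposal follows the paper's proof essentially verbatim: you extend the vertex-level definition to pairs via the same localized product formula, invoke cleavage (Lemma \ref{lem:cleavage}) to organize the horizontal maps as inclusions-plus-localizations indexed by the unique lifts, and use the $\pi$-structure lift $M(K)\to\cEi_K\prod_{L}M(L)$ followed by a further localization $\cEi_K\rightsquigarrow\cEi_H$ for the vertical maps. The added paragraph checking that the mixed horizontal-vertical square commutes and that the transitivity clause of the $\pi$-structure handles vertical composition is a correct and welcome expansion of what the paper leaves implicit, but it is not a different argument.
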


\begin{proof}
Since $qc$-modules are last-determined, it is reasonable to 
 extend the definition on vertices to the entire flag complex by
concentrating on the last term in the flag; 
Thus  we take
$$(\pes M)(\Kb\subseteq \Lb)=\prod_{\pi (K)=\Kb} \cEi_{K} \prod_{\pi(L)=\Lb,
L\subseteq K}  M (L).$$
This is evidently a module over
$$(\pes R)(\Kb\subseteq \Lb)=\prod_{\pi (K)=\Kb} \cEi_{K} \prod_{\pi(L)=\Lb,
L\subseteq K}  R (L).$$

We need to describe the induced maps, and it suffices to do this for the horizontal and vertical cases. 
If we have $\Hb\supseteq \Kb \supseteq \Lb$, we have the horizontal
inclusion  $h: (\Kb\supseteq \Lb ) \lra (\Hb\supseteq \Lb)$ and the vertical 
inclusion  $v: (\Hb\supseteq \Kb ) \lra (\Hb\supseteq \Lb)$. 

Starting  with the horizontal map, $(\pes M)(h): (\pes M)(\Kb\supseteq
\Lb)\lra (\pes M)(\Hb\supseteq \Lb)$, we need to define a map 
$$\prod_{\pi (K)=\Kb} \cEi_{K} \prod_{\pi (L)=\Lb,
L \subseteq K}   M(L)\lra  \prod_{\pi (H)=\Hb} \cEi_{H} \prod_{\pi (L)=\Lb,
L \subseteq H}   M(L).$$
Using Lemma \ref{lem:cleavage}, for each  particular $K$ with $\pi K =\Kb$, there is a unique $H$
with $\pi H = \Hb$ and $H\supseteq K$. Accordingly we may take a
product indexed by $K$ of maps 
$$\cEi_{K} \prod_{\pi (L)=\Lb,
L \subseteq K}   M(L)\lra  \cEi_{H} \prod_{\pi (L)=\Lb,
L \subseteq H}   M(L); $$
this includes the smaller product (over $L\subseteq K$) in the larger one
(over $L\subseteq H$) and localizes. 

Moving on to the vertical map, $(\pes M)(v): (\pes M)(\Hb\supseteq
\Kb)\lra (\pes M)(\Hb\supseteq \Lb)$, we need to define 
$$\prod_{\pi (H)=\Hb} \cEi_{H} \prod_{\pi (K)=\Kb,
K\subseteq H}   M(K)\lra  \prod_{\pi (H)=\Hb} \cEi_{H} \prod_{\pi (L)=\Lb,
L \subseteq H}   M(L).$$
This is a product over $H$, and it suffices to construct 
$$\prod_{\pi (K)=\Kb,
K\subseteq H}   M(K)\lra \cEi_H \prod_{\pi (L)=\Lb,
L \subseteq H}   M(L).$$
Using Lemma \ref{lem:cleavage}, for each  particular $L$ with $\pi L =\Lb$, there is a unique $K$
with $\pi K = \Kb$ and $L\supseteq K$. Accordingly we may use the
$\pi$-structure to obtain a map 
$$  M(K)\lra  \cEi_K\prod_{\pi (L)=\Lb,
L \subseteq K}   M(L).  $$
Now take products, localize further to replace $\cEi_K$ by $\cEi_H$ in
every factor, and include the smaller product (over $L\subseteq K$)
into the larger product (over $L\subseteq H$). 
\end{proof}

\begin{remark}
\label{rem:pesqcp}
The functor $\pes$ takes values in the $\pes R$-modules $\Mb$ which are
themselves  i-quasi-coherent in the sense that $e\Mb$  is quasi-coherent. 

Indeed, 
$$(e\pes M) (K\supseteq L)=e_{K\supseteq L}(\pes M)(\pi K\supseteq \pi
L)=\cEi_{K/L}M(L)=M(K\supseteq L). $$
We will seek a right adjoint on the restricted category of
i-quasi-coherent modules. 
\end{remark}

\subsection{An Euler adapted right adjoint extending modules from $\qp (\Sigma)$ to $\qp (\Sigmab)$}

We are ready to explain the universal property of $\pes$. 

\begin{lemma}
\label{lem:pshriekp}
There is an adjunction
$$\adjunction{e}{\mbox{$qc$-$\pes R^p$-modules}}
{\mbox{$qc$-$\pi$-cts-$R^p$-modules}}{\pes} $$
\end{lemma}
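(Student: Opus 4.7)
The plan is to imitate the proof of Lemma \ref{lem:pshriek}, replacing flags by pairs throughout. The counit $e\pes M \lra M$ is already exhibited and shown to be an isomorphism in Remark \ref{rem:pesqcp}, using the identification
$$e_{K\supseteq L}(\pes M)(\pi(K\supseteq L)) = \cEi_{K/L} M(L) = M(K\supseteq L),$$
where the last equality uses that $M$ is $qc$. So only the unit $\Mb \lra \pes e\Mb$ remains to be constructed, and the triangular identities verified.

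First I would define the unit on the pair $(\Kb\supseteq \Lb)$ as the composite
\begin{multline*}
\Mb(\Kb\supseteq \Lb) \lra \prod_{\pi K=\Kb,\ \pi L=\Lb,\ L\subseteq K} e_{K\supseteq L}\Mb(\Kb\supseteq \Lb)\\
\lra \prod_{\pi K=\Kb} \cEi_K \prod_{\pi L=\Lb,\ L\subseteq K} e_L\Mb(\Lb) = (\pes e\Mb)(\Kb\supseteq \Lb).
\end{multline*}
The first arrow is the natural map from a module over a product ring to the product of its idempotent summands, and the second uses the canonical maps $e_{K\supseteq L}\Mb(\Kb\supseteq\Lb) \to e_L\Mb(\Lb)$ followed by rearrangement of products and localizations, exactly as in the flag case.

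Second, I would verify that this unit is a morphism of $\pes R^p$-modules, i.e.\ that it respects both horizontal and vertical structure maps. For horizontal maps the check is essentially formal, since both sides are defined by the same cleavage (Lemma \ref{lem:cleavage}) combined with localization of a product; naturality of the idempotent decomposition gives the compatibility. For vertical maps the check is more delicate and is the main obstacle: one must use the $\pi$-structure on the underlying $R^p$-module $M=e\Mb$ to produce the required lift through $\cEi_K$ of the product indexed by $\pi L=\Lb$, $L\subseteq K$. The transitivity built into Definition \ref{defn:pistructure} is precisely what makes the resulting diagram commute.

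Finally, the triangle identities follow formally once the unit is shown to be a morphism. Applying $e$ to the unit and composing with the counit at $eM$ picks out, on each summand indexed by $(K\supseteq L)$, the identity of $e_{K\supseteq L}\Mb(\Kb\supseteq \Lb)$, since $e\pes = \mathrm{id}$ on $qc$-modules by Remark \ref{rem:pesqcp}. Dually, applying $\pes$ to the counit at $\Mb$ and precomposing with the unit gives the identity of $\pes eM$ by the analogous pointwise calculation. Thus we obtain the desired adjunction.
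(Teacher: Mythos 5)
Your proposal follows the same outline as the paper's proof: the counit is taken from Remark~\ref{rem:pesqcp}, the unit is built at each pair $(\Kb\supseteq\Lb)$ as a composite starting with the decomposition into idempotent pieces, and the triangle identities are then observed to hold formally. The remarks you add on naturality of the unit (and on the role of the $\pi$-structure for the vertical compatibility) are not spelled out in the paper but are a reasonable and correct elaboration.

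One imprecision worth fixing: there is no \emph{canonical map} $e_{K\supseteq L}\Mb(\Kb\supseteq\Lb)\to e_L\Mb(\Lb)$; in general such a map does not exist, since the source is already localised. What the paper uses at this step is the $iqc$ hypothesis, which supplies the \emph{isomorphism}
$$e_{K\supseteq L}\Mb(\Kb\supseteq\Lb)\;=\;(e\Mb)(K\supseteq L)\;\cong\; R(K\supseteq L)\tensor_{R(L)} e_L\Mb(\Lb)\;=\;\cEi_{K/L}\,e_L\Mb(\Lb),$$
and this is the content of the middle arrow in the displayed composite $\Mb(\Kb\supseteq\Lb)\to\prod e_{K\supseteq L}\Mb(\Kb\supseteq\Lb)\to\prod e_{K\supseteq L}R(K\supseteq L)\tensor_{R(\Lb)}\Mb(\Lb)\to(\pes e\Mb)(\Kb\supseteq\Lb)$. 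Replacing your phrase "canonical maps $e_{K\supseteq L}\Mb(\Kb\supseteq\Lb)\to e_L\Mb(\Lb)$" with "the $iqc$-isomorphisms $e_{K\supseteq L}\Mb(\Kb\supseteq\Lb)\cong\cEi_{K/L}e_L\Mb(\Lb)$" makes the step accurate and matches the paper. With that correction your argument is the paper's argument.
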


\begin{proof}
The counit $e\pes M\lra M$ is described and seen to be an isomorphism
in Remark \ref{rem:pesqcp}. 

On the other hand we obtain a natural map $\Mb\lra \pes e\Mb$ which at
$\Kb\supseteq \Lb$ is 
\begin{multline*}
\Mb (\Kb\supseteq \Lb)\lra \prod_{\pi K=\Kb, \pi L=\Lb}e_{K\supseteq
  L}\Mb(\Kb\supseteq \Lb)  \lra \prod_{\pi K=\Kb, \pi L=\Lb}e_{K\supseteq
  L}R(K\supseteq L)\tensor_{R(\Lb)} \Mb(\Lb) \\
\lra \prod_{\pi (K)=\Kb} \cEi_{K} \prod_{\pi(L)=\Lb,
L\subseteq K}  e_{L}\Mb (\Lb)\\
=(\pes e\Mb)(\Kb\supseteq \Lb)
\end{multline*}

These together satisfy the triangular identities and give an
adjunction. 
\end{proof}

\subsection{$p$-modules}

The adjunction in Lemma \ref{lem:pshriekp} shows that the category of
qc-$\pi$-cts-$R^p$-modules is a retract of the category of 
i-qc-$R^p$-modules.

\begin{defn}
We say that a module over $\pes R^p$ is a $pqc$-module if
the natural map $\Mb \lra \pes e \Mb$  made explicit in Lemma
 \ref{lem:pshriekp} gives an isomorphism
$$\Mb (\Kb\supseteq \Lb)\cong \prod_{\pi (K)=\Kb} \cEi_{K} \prod_{\pi(L)=\Lb,
L\subseteq K}  e_{L}\Mb (\Lb). $$

For brevity, we write $pqc$ for $p$-modules which are $iqc$ and $pqce$ for
$p$-modules which are $iqc$ and $ie$ on the grounds that these are the
appropriate notions for $p$-modules. 
\end{defn}

\begin{remark}
(i) For length 0 flags, the condition states that the values on
are simply the products of the values of $e\Mb$ on the length 0 flags of
$\Sigma$:
$$\Mb(\Kb)=\prod_{\pi K=\Kb} e_K\Mb(\Kb). $$
This accounts for the letter $p$ in $pqc$.

(ii) We have already noted that $\pes e\Mb$ is a $qc$-module. 
Thus, the horizontal map $L \lra (K\supseteq L)$ induces
$$e_{K\supseteq L} \Mb(\Kb\supseteq \Lb)=\cEi_{K/L}e_L\Mb(\Lb). $$
This accounts for the letters $qc$ in $pqce$.
\end{remark}

\begin{defn}
We say that a $\pes R$-module $\Mb$ is is {\em i-extended} (or an
$ie$-module) if $e\Mb$ is extended as an $R$-module. 

More explicitly, the vertical inclusion $K\lra (K\supseteq L)$ induces
$\Mb(\Kb)\lra \Mb(\Kb\supseteq \Lb)$ and hence 
$R(\Kb\supseteq \Lb)\tensor_{R(\Kb)} \Mb(\Kb)\lra \Mb(\Kb\supseteq \Lb)$. If $\Mb$
is extended and we  choose $K\supset L$ with $\pi K=\Kb,
\pi L=\Lb$ then we obtain an isomorphism 
$$e_{K\supseteq L} \Mb(\Kb\supseteq \Lb)= R(K\supseteq L) \tensor_{R(K)} e_{K}\Mb(\Kb). $$
\end{defn}

\begin{cor}
\label{cor:RmodpesRmodp}
The adjunction of Lemma \ref{lem:pshriek}
gives an equivalence
$$\mbox{$pqc$-$\pes R^p$-modules} \simeq
\mbox{$qc$-$\pi$-cts-$R^p$-modules} , $$
and this restricts to an equivalence
$$\mbox{$pqce$-$\pes R^p$-modules} \simeq
\mbox{$qce$-$\pi$-cts-$R^p$-modules} .\qqed $$
\end{cor}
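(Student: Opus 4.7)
The plan is to deduce the corollary from Lemma \ref{lem:pshriekp} almost verbatim from the flag case (Corollary \ref{cor:RmodpesRmod}), checking only that the unit is an isomorphism exactly on the $pqc$ subcategory and that the equivalence restricts correctly along the extendedness condition.

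First, I would observe that Lemma \ref{lem:pshriekp} already supplies the adjunction $(e,\pes)$ between $qc$-$\pes R^p$-modules and $qc$-$\pi$-cts-$R^p$-modules, with counit $e\pes M \lra M$ an isomorphism (Remark \ref{rem:pesqcp}). Thus on the right-hand category the functors are already quasi-inverse, so the only question is which $\pes R^p$-modules $\Mb$ are recovered by $\pes e\Mb$. By the explicit formula for the unit at the end of the proof of Lemma \ref{lem:pshriekp}, the unit $\Mb \lra \pes e\Mb$ is an isomorphism at $(\Kb \supseteq \Lb)$ precisely when
$$\Mb (\Kb\supseteq \Lb)\cong \prod_{\pi (K)=\Kb} \cEi_{K} \prod_{\pi(L)=\Lb,\, L\subseteq K}  e_{L}\Mb (\Lb),$$
which is exactly the $pqc$ condition. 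This establishes the first equivalence. I would also note that $\pes$ takes its values in $pqc$-modules (since $\pes e\pes M \cong \pes M$ via the counit), so the adjunction really restricts.

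For the second equivalence, the main task is to match the $e$ half of $qce$ on the right with the $ie$ half of $pqce$ on the left. If $\Mb$ is $pqce$, then $e\Mb$ is $qc$ by assumption and $e$ by the definition of $ie$; one must also recall that $e\Mb$ acquires a canonical $\pi$-structure (for instance along the lines of Lemma \ref{lem:mininducespstructure}, using quasi-coherence and the idempotent decomposition available over the product ring $\pes R$), so it really lies in the $qce$-$\pi$-cts category. Conversely, if $M$ is $qce$-$\pi$-cts, then $\pes M$ is $pqc$ by the first equivalence, and it is $ie$ because the vertical structure map of $\pes M$ at $(\Kb \supseteq \Lb)$ is given factorwise by the extended map $R(K\supseteq L)\tensor_{R(K)} M(K) \stackrel{\cong}\lra M(K\supseteq L)$, so after applying $e_{K \supseteq L}$ we obtain the required isomorphism $e_{K\supseteq L}\Mb(\Kb\supseteq \Lb) \cong R(K\supseteq L)\tensor_{R(K)} e_K \Mb(\Kb)$.

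The step I expect to need the most care is checking that $e\Mb$ really is $\pi$-continuous when $\Mb$ is $pqce$: one must exhibit liftings as in Definition \ref{defn:pistructure} and verify transitivity. Fortunately, because $\Mb$ is $p$ (product) and $iqc$, the lift is essentially forced by the idempotent decomposition together with the $qc$ description of vertical maps, so transitivity reduces to the compatibility of successive localizations already encoded in the Euler-adapted coefficient system $\pes R^p$. Once these verifications are in place, the two equivalences follow immediately from the unit-counit description in Lemma \ref{lem:pshriekp}.
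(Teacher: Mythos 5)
Your argument is essentially the right one: the paper treats this corollary as immediate from Lemma \ref{lem:pshriekp} and records it with a bare $\qed$, so your write-up is a fuller spelling-out of what the paper leaves to the reader rather than a genuinely different route. Your first equivalence is exactly the standard observation that an adjunction with invertible counit restricts to an equivalence on the subcategory where the unit is invertible, and the definition of $pqc$ is engineered precisely so that this subcategory is the $pqc$-modules; your second equivalence correctly matches the extendedness conditions on the two sides using the formula $e_{K\supseteq L}\Mb(\Kb\supseteq \Lb)\cong R(K\supseteq L)\tensor_{R(K)}e_K\Mb(\Kb)$.

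The one place where you create unnecessary work is the final paragraph, where you worry that $e\Mb$ might fail to be $\pi$-continuous and reach for Lemma \ref{lem:mininducespstructure}. That lemma is about modules over $R^p$ on $\Sigma$ and is not the relevant mechanism here; more importantly, Lemma \ref{lem:pshriekp} already asserts the adjunction lands in $qc$-$\pi$-cts-$R^p$-modules, so $\pi$-continuity of $e\Mb$ is built into the statement you are invoking. Concretely, the $\pi$-structure on $e\Mb$ is produced directly from the vertical structure maps of $\Mb$: for a $pqc$-module, $e_K\Mb(\Kb\supseteq\Lb)\cong \cEi_K\prod_{\pi L=\Lb,\,L\subseteq K}e_L\Mb(\Lb)$ by the $pqc$ condition, so applying $e_K$ to the vertical map $\Mb(\Kb)\lra\Mb(\Kb\supseteq\Lb)$ gives exactly the required lift in Definition \ref{defn:pistructure}, with transitivity inherited from the coefficient system $\pes R^p$. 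Stating it this way replaces your appeal to an inapplicable lemma with a short direct observation and brings your proof fully into line with the structure the paper intends.
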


\section{$R^p$-modules and $(R\relax {\mathcal F})^p$-modules}
\label{sec:RRF}

For any surjective map $q: \Sigmat \lra \Sigma$, we have explained how
to compare $R^p$-modules and $q_!^eR^p$-modules. However when 
$q$ itself is an opfibration  there is the alternative of forming the splitting system $q_!R^s$, and comparing $R^p$-modules and
$(q_!R^s)^p$-modules.  We restrict attention to the case that 
$\Sigmat=\Sigma \cF$ is formed by introducing multiplicities, and
we have $q: \Sigma \cF \lra \Sigma$; accordingly we write $R\cF
=q_!R$. The case of flags follows from the case of pairs, so we will restrict to pairs. 

\begin{example}
If we take $\Sigma =\Sigma_c$ then $\Sigma \cF\cong \Sigma_a$ and $q:
\Sigma_a \lra \Sigma_c$. We then take $R^s=\RRa^s$, so that
$q_!R^s=\RRa \cF=\RRc^s$. Thus this section is exactly designed to consider the
relationship between $\RRa$-modules and $\RRc$-modules. 
\end{example}

One method is to observe that there is a map $\lambda : (R\cF)^p \lra
q_!^eR^p$  and then use restriction, extension and coextension of
scalars. Since the map $\lambda$ is an isomorphism on idempotent
pieces, this allows one to construct a right adjoint to $e:
\mbox{$(R\cF)^p$-modules}\lra \mbox{$R^p$-modules}$, namely
$\lambda^*q_!^e$. However this takes values in the 
$iqc$-modules  (i.e., ones which are $qc$ after $e$ is applied), which
are different from straightforwardly $qc$-modules and so
quasi-coherification would be necessary. 
Instead it seems better to work directly. 

Because of the special nature of $q: \Sigma \cF \lra \Sigma$ we may
formulate a stronger continuity condition on sections, requiring a
continuity condition on $\cF/K$ as well as on inclusions. We therefore
refer to this as $\cF$-continuity. 

\begin{defn}
An {\em $\cF$-$q$-structure} on  $qc$-$R^p$-module is a transitive
system of lifts  for all pairs $K \supseteq L$
$$\diagram 
&&\cEi_K\prod_{\tK}\prod_{\tL \subseteq \tK}\Mt(\tL)=\Mt(K\supseteq
L)_{\cF c}\dto \\
&&\prod_{\tK}\cEi_{\tK}\prod_{\tL \subseteq \tK}\Mt(\tL)=\Mt(K\supseteq L)_c\dto \\
\prod_{\tK}\Mt(\tK) \rrto \ar@{-->}[uurr] &&  \prod_{\tK}\prod_{\tL
  \subseteq  \tK}\cEi_{\tK/\tL}\Mt(\tL)=\Mt(K\supseteq L) 
\enddiagram$$
\end{defn}

\begin{defn}
\label{defn:qshriekd}
We define a functor 
$$q_!^d: \mbox{$q$-$\cF$cts-$qc$-$R^p$-modules}\lra \mbox{$qc$-$(R\cF)^p$-modules}$$
by 
$$(q_!^d\Mt)(K\supseteq L)=\cEi_K \prod_{\tL}\Mt(\tL) =\Mt(K\supseteq
L)_{\cF c}. $$
The horizontal structure maps are simply localizations, and the
vertical structure map for $v: (K)\lra (K\supseteq L)$ is the map
$$\prod_{\tK}\Mt(\tK) \lra \cEi_K\prod_{\tL\subseteq \tK} \Mt(\tL)$$
given by the $\cF$-$q$-structure. 
\end{defn}

\begin{remark}
The definition of an $\cF$-$q$-structure on $\Mt$ may now be rephrased
as saying that the values $\Mt(K\supseteq L)_{\cF c}$ fit together to make
an $(R\cF)^p$-module $q_!^d\Mt$, equipped with a map $q_!^de\Mt\lra
q_!e\Mt$, and an $\cF$-$q$-structure is a map $\Mt \lra q_!^de\Mt$.
\end{remark}

\begin{defn}
A $qc$-$(R\cF)^p$-module $M$ is a {\em $p$-module} if the unit $M\lra q_!^deM$
gives an isomorphism 
$$N (K\supseteq L)\stackrel{\cong}\lra \cEi_K\prod_{\tL} e_{\tL}M(L).$$
\end{defn}

\begin{lemma}
\label{lem:eqd}
There is an adjunction 
$$\adjunction{e}
{\mbox{$qc$-$(R\cF)^p$-modules}}
{\mbox{$q$-$\cF$cts-$qc$-$R^p$-modules}}
 {q_!^d}$$
which restricts to an equivalence
$$\mbox{$pqc$-$(R\cF)^p$-modules}\simeq
\mbox{$q$-$\cF$cts-$qc$-$R^p$-modules}. \qqed$$
\end{lemma}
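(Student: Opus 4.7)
The plan is to follow the template established by Lemma \ref{lem:pshriekp}, since the construction of $q_!^d$ is designed to make these arguments nearly parallel. First I would verify the counit $\epsilon: eq_!^d\Mt \lra \Mt$ is an isomorphism. By Definition \ref{defn:qshriekd} we have
$$(q_!^d\Mt)(K\supseteq L) = \cEi_K \prod_{\tK,\, \tL\subseteq \tK}\Mt(\tL),$$
and applying the idempotent $e_{\tK\supseteq\tL} \in (R\cF)(K\supseteq L)$ picks out the single factor $\cEi_{\tK/\tL}\Mt(\tL)$. Since $\Mt$ is quasi-coherent as an $R^p$-module on $\Sigma\cF$, this factor equals $\Mt(\tK\supseteq\tL)$, giving a natural identification $eq_!^d\Mt \cong \Mt$ that restricts idempotent-wise to the identity.

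Next I would construct the unit $\eta: M \lra q_!^d eM$ for a $qc$-$(R\cF)^p$-module $M$. The target is
$$(q_!^d eM)(K\supseteq L) = \cEi_K \prod_{\tK,\,\tL\subseteq \tK} e_{\tL}M(L),$$
and the map at $(K\supseteq L)$ factors through the product-over-idempotents decomposition of $M(K\supseteq L)$ followed by localization at $\cE_K$. For this to respect the vertical structure maps I need $eM$ to carry a canonical $\cF$-$q$-structure; this comes from the quasi-coherence of $M$, which provides exactly the chain of localization maps demanded in the defining diagram of an $\cF$-$q$-structure (this is the pair-version of Lemma \ref{lem:mininducespstructure}). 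Then one checks routinely that $\eta$ is a map of $q$-$\cF$-cts-$qc$-$R^p$-modules and that $(\epsilon, \eta)$ satisfy the triangular identities; the computations are term-by-term applications of the definitions of the horizontal (localization) and vertical ($\cF$-$q$-structure) maps of $q_!^d$.

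For the equivalence, observe that the $pqc$ condition on a $qc$-$(R\cF)^p$-module $M$ is, by definition, precisely the requirement that the unit $\eta: M \lra q_!^d eM$ be an isomorphism. Combined with the fact that the counit is always an isomorphism, this shows $e$ and $q_!^d$ restrict to mutually inverse equivalences between $pqc$-$(R\cF)^p$-modules and $q$-$\cF$cts-$qc$-$R^p$-modules. The main obstacle I anticipate is bookkeeping: checking that $q_!^d\Mt$ is genuinely quasi-coherent as an $(R\cF)^p$-module (horizontally this is immediate from ``structure maps are localizations'', but one must verify compatibility with the product ring structure on $(R\cF)(K\supseteq L)$) and tracking the two layers of idempotent refinement (over $\tK\in\cF/K$ and over $\tL\subseteq \tK$) so that the $\cF$-continuity condition correctly encodes both the localization from $\cEi_K$ and the product over $\cF/K$.
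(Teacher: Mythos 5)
The paper supplies no proof for this lemma (it is stated with an immediate \qqed, leaving it to the reader as a specialization of the template from Lemma~\ref{lem:pshriekp} and Corollary~\ref{cor:RmodpesRmodp}), and your proposal correctly follows that template: counit from quasi-coherence of $\Mt$, unit $M\lra q_!^d eM$, $pqc$ characterised as the condition that the unit be an isomorphism, triangular identities by term-by-term inspection. That overall shape is right.

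The one place where I would push back is your justification for why $eM$ carries a canonical $\cF$-$q$-structure. You attribute it to the quasi-coherence of $M$ via a ``pair-version of Lemma~\ref{lem:mininducespstructure}''. But that lemma produces a $\pi$-structure, which lifts only through the middle row $\Mt(K\supseteq L)_c=\prod_{\tK}\cEi_{\tK}\prod_{\tL\subseteq\tK}\Mt(\tL)$ of the defining diagram, and it needs the hypothesis that $\Sigma$ has a bottom element. An $\cF$-$q$-structure is strictly stronger: the lift must land in the top row $\Mt(K\supseteq L)_{\cF c}=\cEi_K\prod_{\tK}\prod_{\tL\subseteq\tK}\Mt(\tL)$, where the localization is pulled outside both products. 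Quasi-coherence of $eM$ alone, even with a bottom element, does not obviously get you past the middle row. The correct source of the lift is the $(R\cF)^p$-module structure of $M$ itself: $(R\cF)(K\supseteq L)$ is by definition $\cEi_{K/L}\prod_{\tL}R(G/L)$ with the localization outside the product, so the vertical structure map $M(K)\lra M(K\supseteq L)$ together with the idempotent decomposition already lands in the $\cF c$-level, and passing through idempotents $e_{\tK}$ yields the required transitive system of lifts on $eM$. In other words, the $\cF$-continuity of $eM$ is not deduced from quasi-coherence; it is the shadow of the fact that $M$ lives over a diagram of rings in which $\cE_K$ is a single multiplicative set in a product ring. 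If you replace the appeal to Lemma~\ref{lem:mininducespstructure} with this observation, the rest of your proof goes through as written.
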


The functor $e$ takes extended modules to extended modules, but
$q_!^d$ does not. Instead, if we compose with the associated extended
module functor $\Gamma_v$ from \cite{tnq2} we obtain the following. 

\begin{cor} 
\label{cor:egammaqd}
If $\Sigma$ is finite, 
there is an equivalence
$$\mbox{$qce$-$(R\cF)^p$-modules}\simeq \mbox{$q$-$\cF$cts-$qce$-$R^p$-modules}\qqed$$
\end{cor}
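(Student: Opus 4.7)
The plan is to cut down the equivalence of Lemma \ref{lem:eqd} along the extendedness condition, noting where the obstruction arises and how composing with the associated extended module functor $\Gamma_v$ from \cite{tnq2} resolves it.

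First I would observe that $e$ preserves extendedness: applying the idempotent $e_{K\supseteq L}$ commutes with extension of scalars along the base change $R(K)\lra R(K\supseteq L)$, so an $ie$-module on the $(R\cF)^p$ side maps to an $e$-module on the $R^p$ side. Combined with the $qc$ and continuity statements from Lemma \ref{lem:eqd}, this shows $e$ restricts to a functor
$$e:\mbox{$qce$-$(R\cF)^p$-modules}\lra \mbox{$q$-$\cF$cts-$qce$-$R^p$-modules}.$$

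The obstacle is the opposite direction: the right adjoint $q_!^d$ from Definition \ref{defn:qshriekd} sends a $qce$-$R^p$-module to a $pqc$-$(R\cF)^p$-module, but the output is typically \emph{not} extended, because the vertical structure map of $q_!^d\tM$ is prescribed by the $\cF$-$q$-structure
$$\prod_{\tK}\tM(\tK)\lra \cEi_K\prod_{\tL\subseteq \tK}\tM(\tL),$$
and this rarely exhibits $(q_!^d\tM)(K\supseteq L)$ as $R(K\supseteq L)\tensor_{R(K)}(q_!^d\tM)(K)$. To remedy this I would set $\widetilde{q}_!^d := \Gamma_v\circ q_!^d$, where $\Gamma_v$ is right adjoint to the inclusion of extended $qc$-modules into $qc$-modules, and verify (i) that $\widetilde{q}_!^d$ remains right adjoint to $e$ (composition of adjoints); (ii) that the counit $e\widetilde{q}_!^d\tM\lra \tM$ factors as $e\Gamma_v q_!^d\tM\lra eq_!^d\tM\lra \tM$, with the second map the counit of Lemma \ref{lem:eqd} (an isomorphism on $qc$-inputs) and the first an isomorphism because $e$ commutes with $\Gamma_v$ idempotent-by-idempotent when $\tM$ is already extended; (iii) that for a $qce$-$(R\cF)^p$-module $N$ the module $q_!^d eN$ is already extended, so $\Gamma_v$ acts trivially and the unit $N\lra \widetilde{q}_!^d eN$ reduces to the unit $N\lra q_!^d eN$ of Lemma \ref{lem:eqd}.

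The main obstacle I anticipate is verifying the compatibility of $\Gamma_v$ with the $\cF$-$q$-continuity and with the product structure on $(R\cF)^p$: concretely, that $\Gamma_v q_!^d\tM$ still satisfies the $p$-condition (so that the idempotent decomposition on single elements persists after extended-ification) and that $e$ and $\Gamma_v$ commute on the relevant subcategory. The finiteness hypothesis on $\Sigma$ enters precisely here, ensuring the construction of $\Gamma_v$ from \cite{tnq2} applies and that its computation reduces to a finite process compatible with the localizations and products appearing in $\widetilde{q}_!^d$.
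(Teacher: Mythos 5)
Your overall plan matches the paper's terse proof-by-remark: compose $q_!^d$ with $\Gamma_v$ to repair non-extendedness, then verify the composite is right adjoint to $e$ and gives an equivalence. Steps (i) and (ii) are essentially sound (composing adjunctions; noting $e$ commutes with $\Gamma_v$ on the relevant inputs).

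Step (iii), however, contains a gap, and you have misassigned the role of the finiteness hypothesis. You claim that for a $qce$-$(R\cF)^p$-module $N$ the module $q_!^d e N$ is ``already extended.'' This is in tension with the paper's own remark that $q_!^d$ does \emph{not} preserve extendedness, and in fact it holds only in a roundabout way: $q_!^d eN$ is extended precisely when the unit $N\to q_!^d eN$ of Lemma \ref{lem:eqd} is an isomorphism, and that unit is an isomorphism exactly when $N$ is a $p$-module. The $p$-condition is \emph{not} part of the $qce$ hypothesis --- it asserts that $N(K\supseteq L)$ decomposes as a product over the idempotents indexed by $\cF/L$, which for infinite fibres is a genuine extra constraint. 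So as written your step (iii) is circular: ``$q_!^d eN$ extended'' $\Leftarrow$ ``unit is iso'' $\Leftarrow$ ``$N$ is a $p$-module,'' and you have invoked none of these.

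Finiteness is what rescues the argument, and it does so at exactly this point: when $\Sigma$ is finite (so that the relevant products are finite), a module over the product ring $(R\cF)(K)=\prod_{\tK}R(G/\tK)$ automatically splits along the idempotents, so the $p$-condition holds for free, whence $qce \Rightarrow pqce$ on the $(R\cF)^p$ side. Then Lemma \ref{lem:eqd} gives $q_!^d eN\cong N$, which is extended, and your (iii) goes through. Equivalently and more cleanly: from the triangular identity and the counit isomorphism one gets that $e\eta_N$ is an isomorphism, and finiteness makes $e$ conservative on $(R\cF)^p$-modules, so $\eta_N$ is an isomorphism. You instead attribute the finiteness hypothesis solely to ``ensuring the construction of $\Gamma_v$ applies''; that is only its secondary role (finiteness also guarantees $\Gamma_v$ preserves $qc$, which you do need for the other composite), and by omitting its primary role you have left the unit isomorphism unjustified.
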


\section{Applications to models for rational torus-equivariant spectra}
\label{sec:AG}

The purpose of this section is to record the consequences of the
general theory for the special case relevant to rational $G$-spectra
where $G$ is an $r$-torus. In this subsection we introduce the
diagrams of rings and the modules over them, in Subsection \ref{subsec:diagrams} we will
display the categories and functors, and then in a series of
subsections we describe how our general results establish the
equivalences we need. 

We consider  $\Sigma_c=\connsub (G)$ ($c$ stands for `connected'), and the standard
system of multiplicities so that $\Sigma_a=\Sigma \cF =\TC (G)$ is the
toral chain category ($a$ stands for `all') and the dimension poset $\Sigma_d=\{ 0, 1, \ldots,
r\}$ ($d$ stands for `dimension').   We consider the $\Sigma_a$-splitting system $\RRa$ defined by
$$\RRa (G/K)=H^*(BG/K) $$ 
equipped with its standard system of Euler classes, which is maximally
generated. We could then introduce multiplicities and hence get a
$\Sigma_c$-splitting system $\RRcb =(i^*\RRa)\cF$, but as shown in
Example \ref{eg:rrcfisrrcbf},  this is isomorphic to  the $\Sigma_c$-system $\RRc$ defined
by 
$$\RRc (G/K)=\RRa \cF (G/K)= \prod_{\tK \in \cF
 /K} H^*(BG/\tK )=\cO_{\cF/K}. $$
Note that the associated coefficient system of this splitting system
is middle independent
so is quite different from $q_!^e\RRa$.

The other important coefficient system is $\RRd =d_!^e\RRc$. This is
defined on the subdivided $r$-simplex $\flag ([0,r])$, and takes the
following values on vertices:  
$$\RRd (m)=d_!^e\RRc (m) \cong \prod_{\dim (K )=m} \cO_{\cF/K} \cong
\prod_{\dim (\tK)=m }  H^*(BG/\tK ). $$

The purpose of this section is to assemble all the work we have done
to give adjoint equivalences between categories of $\RRd^f$-modules
(i.e., modules over a coefficient system on the subdivided
$r$-simplex, as comes out of the topology), categories of 
$\RRc^{p}$-modules (i.e., modules over pairs as in \cite{tnq1, tnq2})
and categories of  
$\RRa^{p}$-modules (i.e., modules over pairs encoding the
localization theorem).

More precisely, we show the following four versions of the category
$\cA(G)$ are equivalent:
\begin{itemize}
\item $\cA_a^p(G) :=\mbox{$q$-$\cF$cts-$qce$-$\RRa^p$-mod}$ (the model
  most clearly embodying the localization theorem)
\item $\cA_c^p(G) :=\mbox{$qce$-$\RRc^p$-mod}$ (the model used in
  previous work from \cite{tnq1} onwards)
\item $\cA_c^f(G):=\mbox{$qce$-$\RRc^f$-mod}$ (the model used to
  compare with $\cA_d^f(G)$ below)
\item $\cA_d^f(G):=\mbox{$pqce$-$\RRd^f$-mod}$ (the model coming out
  of the proof in \cite{tnqcore})
\end{itemize}

We note that there are numerous other variants that could be
discussed (for example (i) replace $\RRa^p$ by $\RRa^f$,
(ii) replace $\RRc^p$ by $q_!^d\RRa^p$, 
(iii) replace $\RRc^f$ by $q_!^e\RRa^f$, (iv) 
$\RRd^f$ by $(dq)_!^e\RRa^f$). Our general results do give models
based on each of these alternatives, but we will focus on the four listed.

\subsection{Some diagrams}
\label{subsec:diagrams}
In the following diagrams, a number of  functors are used, and their
definitions may be found as follows: $e$ in \ref{lem:defe}, 
$q_*$ in \ref{defn:pistar}, 
$q_!$ in \ref{defn:pishriek}, 
$q_!^d$ in \ref{defn:qshriekd}, 
$d_!^e$ in \ref{defn:pishrieke},
$p$ and $f$ in Section \ref{sec:pf}. The definitions of the categories
are recalled near those of the appropriate functors.

To start with, we display the module categories of concern, together with the
functors that exist on the whole module categories. 
$$\diagram
&a& c&d\\
s&\mbox{$\RRa^s$-modules}
\rto<-0.7ex>_{q_!}&\mbox{$\RRc^s$-modules} \lto<-0.7ex>_e& \times\\
p&\mbox{$\RRa^p$-modules}
\dto^f&\mbox{$\RRc^p$-modules} \lto<-0.7ex>_e\dto^f& \times\\
f&\mbox{$\RRa^f$-modules}
&\mbox{$\RRc^f$-modules} \lto<-0.7ex>_e 
& \mbox{$\RRd^f$-modules}\lto<-0.7ex>_e
\\
\enddiagram$$

Restricting to categories of 
$qc$-modules on which the Euler-adapted right adjoints exist, we have
the diagram
$$\diagram
&a& c&d\\
s&\mbox{$\RRa^s$-modules}
\rto<-0.7ex>_{q_!}&\mbox{$\RRc^s$-modules} \lto<-0.7ex>_e& \times\\
p&\mbox{$qc$-$q$-$\cF$cts-$\RRa^p$-modules}\rto<-0.7ex>_{q^d_!}
&\mbox{$qc$-$\RRc^p$-modules} \lto<-0.7ex>_e\dto<0.7ex>^f & \times\\
f&\times
&\mbox{$qc$-$\RRc^f$-modules} \rto<-0.7ex>_{d^e_!}\uto<0.7ex>^p
& \mbox{$qc$-$\RRd^f$-modules}\lto<-0.7ex>_e
\\
\enddiagram$$

Restricting further to categories on which we have equivalences, and
using the torsion functor $\Gamma $ right adjoint to inclusion of
$qce$-$\RRc^p$-modules into all $\RRc^p$-modules defined in \cite{tnq2} (see also Section
\ref{sec:Gamma} below)
$$\diagram
&a& c&d\\
s&\times &\times &\times\\
p&\mbox{$qce$-$q$-$\cF$cts-$\RRa^p$-modules}\rto<-0.7ex>_{\Gamma q^d_!}
 &\mbox{$qce$-$\RRc^p$-modules} \dto<0.7ex>^f \lto<-0.7ex>_e^{\simeq}& \times\\
f&\times
&\mbox{$qce$-$\RRc^f$-modules}
\rto<-0.7ex>_{d^e_!}^{\simeq}\uto<0.7ex>^p_{\hspace{-0.7ex} \simeq}
& \mbox{$pqce$-$\RRd^f$-modules}\lto<-0.7ex>_e
\\
\enddiagram$$

More succinctly
$$\diagram
&a& c&d\\
s&\times&\times &\times \\
p&\cA_a^p(G) \rto<-0.7ex>_{\Gamma q^d_!}^{\simeq}
&\cA_{c}^p(G) \dto<0.7ex>^f \lto<-0.7ex>_e& \times\\
f&\times
&\cA_{c}^f (G) \rto<-0.7ex>_{d^e_!}^{\simeq}\uto<0.7ex>^p_{\hspace{-0.7ex}\simeq}
& \cA^f_{d}(G)\lto<-0.7ex>_e
\\
\enddiagram$$

\subsection{Flags and pairs}
For the two posets $\Sigma_a$ and $\Sigma_c$, we have splitting systems and we may therefore define
categories of  pairs. For each of these Lemma
\ref{lem:QPisflag} gives 
 an equivalence between a pair of algebraic models of rational $G$-spectra. 

\begin{cor}
(i) There is an equivalence of categories
$$\cAp_a(G)=\mbox{$qce$-$q$-$\cF$cts-$\RRa^p$-modules}\simeq 
\mbox{$qce$-$q$-$\cF$cts-$\RRa^f$-modules}=\cA^{f}_a(G) $$ 

(ii) There is an equivalence of categories 
$$\cA^{p}_c(G)=\mbox{$qce$-$\RRc^p$-modules}\simeq
\mbox{$qce$-$\RRc^f$-modules}=\cAf_c(G) . \qqed$$ 
\end{cor}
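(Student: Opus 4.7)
The plan is to derive both equivalences as straightforward consequences of Lemma \ref{lem:QPisflag}, which already supplies the core equivalence $\mbox{$qce$-$R^p$-mod}\simeq \mbox{$qce$-$R^f$-mod}$ for any splitting system $R$ equipped with a transitive system of Euler classes.

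For part (ii), I would simply instantiate Lemma \ref{lem:QPisflag} at $R=\RRc$ with the Euler classes identified in Example \ref{eg:rrcfisrrcbf}. This directly yields the asserted equivalence $\cA_c^p(G)\simeq \cA_c^f(G)$, with no further structure to track.

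For part (i), I would first apply Lemma \ref{lem:QPisflag} to $R=\RRa$ with its standard maximally generated Euler classes to obtain mutually quasi-inverse functors
$$f:\mbox{$qce$-$\RRa^p$-mod}\leftrightarrows \mbox{$qce$-$\RRa^f$-mod}:p.$$
It then remains to show that $f$ and $p$ restrict to an equivalence on the full subcategories cut out by the $q$-$\cF$cts condition. The key observation is that the $\cF$-$q$-structure, as formulated in Section \ref{sec:RRF}, is an additional datum — a transitive choice of lifts — whose source and target depend only on the values of the module on single subgroups $K$ and on pairs $(K\supseteq L)$. Since these correspond precisely to length-$0$ and length-$1$ flags, and since $f$ and $p$ act tautologically on such values, any choice of lifting data for the pair module transports verbatim to a choice of lifting data for its associated flag module, and conversely. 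One therefore simply defines a $q$-$\cF$cts-$\RRa^f$-module to be an $\RRa^f$-module whose associated pair module (via $f$) carries a $q$-$\cF$cts structure; with this convention the restriction of $f$ and $p$ to the continuous subcategories is immediate.

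The only step that goes beyond a direct appeal to Lemma \ref{lem:QPisflag}, and which I expect to be the main (mild) obstacle, is checking that the transitivity axiom on the lifts is preserved in both directions. On the pair side, transitivity is compatibility along chains $H\supseteq K \supseteq L$. On the flag side the same chains appear as length-$2$ flags, and the middle-independence of $qce$-modules canonically identifies the value on $(H\supset K \supset L)$ with the value on the pair $(H\supseteq L)$. This identification makes the two transitivity conditions literally the same diagram, so no genuine verification is required beyond unwinding definitions. Combining the two equivalences yields the stated identifications $\cA_a^p(G)\simeq \cA_a^f(G)$ and $\cA_c^p(G)\simeq \cA_c^f(G)$.
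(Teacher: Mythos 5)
Your proof is correct and takes essentially the same route as the paper, which dispatches the corollary by a direct appeal to Lemma~\ref{lem:QPisflag} applied to the two splitting systems $\RRa$ and $\RRc$. The one place where you go beyond what the paper records explicitly is the check in part~(i) that the $q$-$\cF$cts condition transports across the $f$/$p$ equivalence; your observation that the $\cF$-$q$-structure and its transitivity involve only values on vertices and pairs (i.e.\ on length-$0$ and length-$1$ flags, where $f$ and $p$ act tautologically) is exactly the point the paper leaves implicit, and making it explicit is harmless and clarifying.
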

\subsection{$\RRa^p$-modules and $\RRc^p$-modules}

We apply the results of Section \ref{sec:RRF} to compare
$\RRa^p$-modules and $\RRc^p$-modules. Special cases of Lemma
\ref{lem:eqd} and Corollary \ref{cor:egammaqd} give the following. 

\begin{lemma}
There is an adjunction 
$$\adjunction{e}
{\mbox{$qc$-$\RRc^p$-modules}}
{\mbox{$q$-$\cF$cts-$qc$-$\RRa^p$-modules}}
{q_!^d}$$
which restricts to an equivalence
$$\mbox{$pqc$-$\RRc^p$-modules}\simeq \mbox{$q$-$\cF$cts-$qc$-$\RRa^p$-modules}$$

If we compose with the  functor $\Gamma$ from \cite{tnq2} we obtain an equivalence
$$\cA_c^p(G)=\mbox{$qce$-$\RRc^p$-modules}\simeq \mbox{$q$-$\cF$cts-$qce$-$\RRa^p$-modules}=\cA_a^p(G).\qqed$$
\end{lemma}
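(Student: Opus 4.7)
The plan is to recognize that this lemma is a direct specialization of the general machinery of Section \ref{sec:RRF}, namely Lemma \ref{lem:eqd} and Corollary \ref{cor:egammaqd}, applied to the specific setting $\Sigma = \Sigma_c = \connsub(G)$ together with the system of multiplicities $\cF$ sending a connected subgroup $K$ to the set of closed subgroups with identity component $K$. Under this choice, $\Sigma \cF = \Sigma_a = \TC(G)$, and the associated surjection is the opfibration $q: \Sigma_a \lra \Sigma_c$. The splitting system $R^s$ in the general framework is taken to be $\RRa^s$, so that $R\cF = q_!\RRa^s$ agrees (up to the termwise inflation isomorphism established in Example \ref{eg:rrcfisrrcbf}) with $\RRc^s$. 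Thus every instance of $(R\cF)^p$ in the general results unambiguously corresponds to $\RRc^p$.

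With this translation in place, the first two assertions are literally what Lemma \ref{lem:eqd} states. Namely, the adjunction
$$\adjunction{e}{\mbox{$qc$-$\RRc^p$-modules}}{\mbox{$q$-$\cF$cts-$qc$-$\RRa^p$-modules}}{q_!^d}$$
and its restriction to an equivalence on $pqc$-$\RRc^p$-modules follow once we recall the definition of $q_!^d$ from Definition \ref{defn:qshriekd} (forming products over fibres of $\cF/K$ and localizing by $\cEi_K$) and observe that its unit $\Mt \lra q_!^d e\Mt$ is precisely the $\cF$-$q$-structure lift, while the counit $e q_!^d \Mt \lra \Mt$ is the canonical isomorphism identifying idempotent summands. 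No new verification is required beyond the general arguments.

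For the second statement, I would invoke Corollary \ref{cor:egammaqd}, which grafts the torsion/extendedness functor $\Gamma$ of \cite{tnq2} onto the above adjunction. Its hypothesis requires $\Sigma$ finite; here $\Sigma_c = \connsub(G)$ is finite because $G$ is an $r$-torus, so the hypothesis is met. Applying the corollary yields the desired equivalence
$$\cA_c^p(G)=\mbox{$qce$-$\RRc^p$-modules}\simeq \mbox{$q$-$\cF$cts-$qce$-$\RRa^p$-modules}=\cA_a^p(G),$$
the composite functor being $\Gamma q_!^d$ in one direction and $e$ in the other.

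The only point requiring care—and what I would flag as the main subtlety—is the matching of conventions between the general $R\cF$ setup and the specific $\RRc$ used throughout the paper. One must check that the Euler classes used on $\RRc$ (as in Example \ref{eg:RRc}) are compatible with those on $(i^*\RRa)\cF = \RRcb$ (as in Example \ref{eg:rrcfisrrcbf}) under the termwise inflation isomorphism, so that the resulting coefficient systems $\RRc^p$ and $(\RRa\cF)^p$ agree and the general result applies without modification. Once this bookkeeping is confirmed, both assertions of the lemma follow formally.
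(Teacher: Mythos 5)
Your overall strategy---reduce the lemma to Lemma \ref{lem:eqd} and Corollary \ref{cor:egammaqd}, with $\Sigma=\Sigma_c$, $\Sigma\cF=\Sigma_a$, $R^s=\RRa^s$, $q_!R^s\cong\RRc^s$---is exactly the paper's: the text preceding the lemma reads ``Special cases of Lemma \ref{lem:eqd} and Corollary \ref{cor:egammaqd} give the following,'' and no further proof is given. So the structure of your argument is right.

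However, there is a concrete error in your justification of the hypothesis of Corollary \ref{cor:egammaqd}. You assert that $\Sigma_c=\connsub(G)$ is finite ``because $G$ is an $r$-torus.'' This is false whenever $r\geq 2$: a $2$-torus already has infinitely many distinct connected circle subgroups (one for each primitive direction in the integer lattice), and hence $\connsub(G)$ is infinite for every torus of rank at least $2$. The finiteness hypothesis of Corollary \ref{cor:egammaqd} (and of the lemma in Section \ref{sec:Gamma} that $\Gamma_v$ preserves $qc$-modules) is therefore not literally satisfied in the case at hand, and your appeal to it does not close the argument. What actually rescues the statement is that the torsion functor $\Gamma=\Gamma^p_{ch}\Gamma^p_{cv}$ for $\RRc^p$-modules is constructed in \cite{tnq2} (Theorems 7.1 and 8.1, recalled as Theorem \ref{thm:torsionfunctors} of this paper) without any finiteness assumption on $\Sigma_c$; the relevant issue is not that $\Sigma_c$ be finite but that for the specific diagram $\RRc^p$ the machinery of \cite{tnq2} shows $\Gamma$ exists and lands in $qce$-modules. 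Your proof should invoke that instead of asserting finiteness.

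A minor point: for the identification $q_!\RRa^s\cong\RRc^s$ you cite Example \ref{eg:rrcfisrrcbf}, which concerns the coefficient systems $\RRc^f\cong\RRcb^f$; the termwise-inflation isomorphism of splitting systems you actually need is established in Example \ref{eg:RcRcb}.
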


\subsection{Collecting subgroups of the same dimension}

In this subsection we consider the dimension function $d: \Sigma_c\lra \Sigma_d=[0,r]$.
Since $\Sigma_c$ has a minimal element, by Lemma \ref{lem:mininducespstructure} we do not need to mention
$d$-continuity, and we prove that  the category of $qce$-modules over the flag
complex of all connected subgroups (i.e., $qce$-$\RRc^f$-modules) 
is  equivalent to the category of pqce-modules over the subdivided $r$-simplex
(i.e., $pqce$-$\RRd^f$-modules).  
Corollary \ref{cor:RmodpesRmod} has the following special case.

\begin{cor} 
With 
$$\RRc (G/K)=\prod_{\tK \in \cF/K} H^*(BG/\tK) =\cO_{\cF/K} \mbox{ and } \RRd (m)=\prod_{\dim (K)=m}\cO_{\cF/K}$$
as above, there is an adjunction 
$$
\adjunction{e}{\mbox{$qc$-$\RRd^f$-mod}}
{\mbox{$qc$-$\RRc^f$-mod}}{d^e_!}
$$
This induces an equivalence
$$\mbox{$qc$-$\RRc^f$-mod}\simeq \mbox{$pqc$-$\RRd^f$-mod}. $$
Furthermore, it  respects extended modules, and induces an equivalence
$$\cAf_{c}(G)=\mbox{$qce$-$\RRc^f$-mod}\simeq
\mbox{$pqce$-$\RRd^f$-mod}=\cAf_d(G). \qqed$$
\end{cor}

Note that composing our equivalences does not give a simple comparison of $\RRa^f$-modules
and $\RRd^f$-modules. The continuity conditions  mean that
$(dq)_!^e\RRa^f\not \cong \RRd^f$ (even though they
have the same values on vertices). Instead we have the following
equivalence.

\begin{cor} 
We have an equivalence
$$\mbox{$dq$-cts-$qc$-$\RRa^f$-mod}\simeq \mbox{$pqc$-$((dq)_!^e\RRa^f)$-mod}. $$
Furthermore, it  respects extended modules, and induces an equivalence
$$\mbox{$qce$-$dq$-cts-$\RRa^f$-mod}\simeq
\mbox{$ppqce$-$(dq)_!^e\RRa^f$-mod}. \qqed$$
\end{cor}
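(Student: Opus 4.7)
The plan is to recognize this as a direct application of Corollary \ref{cor:RmodpesRmod} with the composite map $\pi := dq \colon \Sigma_a \to \Sigma_d$ and splitting system $R := \RRa$. Once the hypotheses of Section \ref{sec:Euleradapted} are verified for $\pi = dq$, the two stated equivalences come for free from the two parts of that corollary, and no further construction is required.

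First I would check that $dq$ is a surjective poset map sending top and maximal elements to top and maximal elements respectively. Surjectivity is immediate: $q$ is surjective by the very definition of a system of multiplicities (the structure maps $i_*$ are required to be surjective), and $d$ is surjective because every dimension $0,1,\ldots,r$ is attained by some connected subgroup of the torus $G$. The top element $G \in \Sigma_a$ maps through $q$ to $G \in \Sigma_c$ and through $d$ to $r \in \Sigma_d$. A maximal element of $\Sigma_a = \TC(G)$ is a codimension one subgroup $\tH$; $q$ sends it to the codimension one connected subgroup $H \in \Sigma_c$, which is maximal in $\Sigma_c$, and $d(H) = r-1$ is the unique maximal element of $[0,r]$ below the top. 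Finally, $\RRa$ carries a maximally generated system of Euler classes, coming (as in Example \ref{eg:BorelTorus}) from the faithful one-dimensional representations of the quotients $G/\tH$ for maximal $\tH$, so the Euler-adapted formalism of Section \ref{sec:Euleradapted} applies to $\pi = dq$.

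With these hypotheses in hand, Corollary \ref{cor:RmodpesRmod} applied to $\pi = dq$ yields directly the adjunction $e \dashv (dq)_!^e$ between $qc$-$dq$-cts-$\RRa^f$-modules and $iqc$-$(dq)_!^e\RRa^f$-modules, restricting to the required equivalence
$$\mbox{$pqc$-$(dq)_!^e\RRa^f$-mod} \simeq \mbox{$dq$-cts-$qc$-$\RRa^f$-mod}.$$
The restriction clause of the same corollary gives the extended-module version. The notational point to unpack is the label $ppqce$: because $dq$ factors through $\Sigma_c$ and the fibre of $dq$ over a dimension $m$ is indexed by pairs $(K,\tK)$ with $\dim K = m$ and $\tK \in \cF/K$, the $p$-condition of Section \ref{sec:Euleradapted} unfolds here as a nested product (a product over connected $K$ with $\dim K = m$ of products over $\cF/K$), and we flag this by doubling the $p$.

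The main obstacle is essentially bookkeeping rather than mathematical: one must check that the formula for $(dq)_!^e \RRa^f$ evaluated on a flag $\Fb = (\Lb_0 \supset \cdots \supset \Lb_t)$ in $[0,r]$, namely a doubly nested sequence of products and localizations $\prod_{\dim L_0 = \Lb_0}\cEi_{L_0} \prod_{\dim L_1 = \Lb_1, L_1 \subseteq L_0} \cdots$ indexed by toral-chain subgroups, is precisely what Definition \ref{defn:pishrieke} produces for $\pi = dq$. This is automatic from the definition of $\pes$, so nothing new is required and the corollary follows by citation.
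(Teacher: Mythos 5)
Your proposal is correct and matches the paper's intended argument: the corollary is stated with only a \qqed, signalling a direct citation of Corollary~\ref{cor:RmodpesRmod} applied to $\pi=dq$, exactly as you propose, and your verifications of surjectivity, preservation of top and maximal elements, and maximal generation of the Euler classes are the right hypotheses to check. One small point worth noting: the $ppqce$ label is, as you observe, a presentational choice --- the fibre of $dq$ over $m$ is $\coprod_{\dim K=m}\cF/K$, so the single $dq$-product condition of Section~\ref{sec:Euleradapted} is literally equal to the nested ($q$ then $d$) product condition, and the paper uses the doubled $p$ only to record that decomposition (the $pqc$ versus $ppqce$ asymmetry in the two displayed lines of the corollary appears to be a typographical slip rather than a mathematical distinction).
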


\begin{remark}
We could presumably also obtain this equivalence as a composite. 
\end{remark}

\subsection{The case of rank 1}
The case of the circle group is too simple to be representative. To
start with $\Sigma_c=(1\lra T)$ is finite, avoiding one layer
of continuity conditions. The fact that the chains are of length $\leq
1$ means there is no distinction between pairs (p) and flags
(f). Finally, since all subgroups of dimension 0 have the same
identity component $\Sigma_d=\Sigma_c$. In short, the only two models
that really need comparision are $\cA^p_c(T)$ and $\cA^p_a(T)$. This
has been discussed in \cite{s1q}, but it is helpful to relate it to
the present framework and notation. The recollections here will also
be useful preparation for Section \ref{sec:Gamma}. 

In any case 
$$\Sigma_a= \{ i\lra T \st i\geq 1\}, $$
where $i$ is short for the cyclic subgroup of order $i$. The map
$q:\Sigma_a\lra \Sigma_c$ is defined by $q(i)=1$ and $q(T)=T$.
The diagram of rings is specified by the values on $i\lra T$, which we
abbreviate to  
$$R_i \lla k$$
In the original case 
$$R_i=H^*(B(T/C_i))=\Q [c_i] \mbox{ and } k=\Q ,$$
where $c_i$ is of cohomological degree 2. We then find that on the diagram $\Sigma_c$ we have
$$ R=\prod_i R_i \lla k. $$

Next, 
$$\flag (\Sigma_c)=\left\{ (1)\lra (T\supset 1)\lla (T) \right\}$$
and 
$$\flag (\Sigma_a)=\left\{ (i)\lra (T\supset i)\lla (T) \st i\geq 1
\right\}. $$
The system of Euler classes is given by choosing an element $c_i\in
R_i$. The value on the $i$th flag from $\flag(\Sigma_a)$ is
$$R_i \lra R_i[\frac{1}{c_i}] \lla k$$
and the value on the flag from $\flag(\Sigma_c)$ is 
$$R \lra \cEi R  \lla k$$
where 
$$\cEi R=\colim (R\stackrel{(c_1,1,1,1,1\cdots)}\lra
  R\stackrel{(c_1,c_2,1,1,1\cdots)}\lra 
    R\stackrel{(c_1,c_2,1c_3,1,1\cdots)}\lra
    R\stackrel{(c_1,c_2,1c_3,c_4,1\cdots)}\lra \cdots ). $$
Now $\cA^p_c(T)$ is a certain category of $\R_c$-modules, namely
$$\cA^p_c(T)=\{ N\lra P \lla V\st \cEi N\cong P \cong \cEi R\tensor_kV\}$$
(where the first isomorphism is quasicoherence, and the second is
extendedness). 

Next,  $\cA^p_a (T)$ is a certain category of $\R_a$-modules, namely
$$\cA^p_a(T)=\{ N_i\lra P_i \lla V\st  N_i[\frac{1}{c_i}]\cong P_i
\cong R[\frac{1}{c_i}]\tensor_kV,\\ 
\diagram 
&\cEi \prod_i N_i\dto \\
V\urto^{\kappa} \rto &\prod_i (N_i[\frac{1}{c_i}])
\enddiagram
\}$$
(where the first isomorphism is quasicoherence, the second
is extendedness, and $\kappa$ is the continuity structure). 

Finally, the equivalence between these two categories is induced by
$e$ and $\Gamma q_!^d$. In the easier direction
$$e: \cA^p_c(T)\lra \cA^p_a(T)$$
takes $N\lra P \lla V$ to the object with $N_i=e_iN, P_i=e_iP$ and
the continuity condition $\kappa$ is the composite
$$V\lra P\cong \cEi N\lra \cEi \prod_ie_iN. $$
In the other direction, 
$$\Gamma q_!^d: \cA^p_a(T)\lra \cA^p_c(T)$$
takes an object $\{ N_i\lra P_i \lla V, \kappa\}$ first by $q_!^d$ to  
$$(\prod_i N_i \lra \prod_i P_i \lla V)$$
and then by $\Gamma$ to 
$(N \lra P \lla V)$ where $N$ is the pullback
$$\diagram
N \rto \dto & \cEi R\tensor_k V\dto \\
\prod_iN_i \rto &\cEi \prod_iN_i
\enddiagram$$
and $P=\cEi N$. 

The fact that these are inverse equivalences follows from the fact
that the square for $N=R$  is a pullback (i.e., that $\R_c=\Gamma
q_!^d e\R_c$),  together with the quasicoherence condition.

\section{Torsion functors}
\label{sec:Gamma}

Various obvious constructions on qce-modules give objects which are
not $qce$. It is therefore convenient to have a right adjoint $\Gamma$
to the inclusion of $qce$-modules in all modules. There are three cases
where  we need this: for $\RRc^p$-modules,  $\RRc^f$-modules and
$\RRd^f$-modules. The case of $\RRc^p$ was dealt with in \cite{tnq2}. 
It does not seem  possible to deduce the case of $\RRc^f$ directly, because one can
only  construct pair modules from flag modules in the
middle-independent case. Nonetheless, the methods of \cite{tnq2}
remain  effective. The case of $\RRd^f$ is a little different, because
the coproduct of $p$-modules is not the coproduct of the underlying
modules; this will be discussed in Subsection \ref{subsec:Gammad}. 

The strategy for the two cases involving $\RRc$ is the same for pairs
and  flags. As in \cite{tnq2}, we factorize the inclusion 
$$\mbox{$\RRc$-modules}\stackrel{k}\lra \mbox{$e$-$\RRc$-modules}\stackrel{j}\lra
\mbox{$qce$-$\RRc$-modules}.$$
The right adjoint to the first is called $\Gamma_v$ (the associated
extended module construction) and to the second $\Gamma_h$
(quasi-coherification); the letters $v$ and $h$ refer to horizontal
and vertical structure maps. 

The functor $\Gamma_v$ does not use very much about our particular
context: it would apply to any poset $\Sigma$ with ranks (i.e., with a
dimension function $d: \Sigma \lra [0,r]$ for some $r$, so that
$K\supset L$ implies $d(K)>d(L)$). However the construction of
$\Gamma_h$ needs finiteness conditions on the rings as well, and we
will be content to cover our immediate applications.

\begin{thm}
\label{thm:torsionfunctors}
There are right adjoints to inclusions as follows
\begin{enumerate}
\item $\Gamma^p_{cv}$ to 
$$k: \mbox{$\RRc^p$-modules} \lra \mbox{$e$-$\RRc^p$-modules}$$
\item $\Gamma^p_{ch}$ to 
$$j: \mbox{$e$-$\RRc^p$-modules} \lra \mbox{$qce$-$\RRc^p$-modules}$$
\item $\Gamma^p_c=\Gamma^p_{ch}\Gamma^p_{cv}$ to 
$$i=jk: \mbox{$\RRc^p$-modules} \lra \mbox{$qce$-$\RRc^p$-modules}$$
\item $\Gamma^f_{cv}$ to 
$$k: \mbox{$\RRc^f$-modules} \lra \mbox{$e$-$\RRc^f$-modules}$$
\item $\Gamma^f_{ch}$ to 
$$j: \mbox{$e$-$\RRc^f$-modules} \lra \mbox{$qce$-$\RRc^f$-modules}$$
\item $\Gamma^f_c=\Gamma^f_{ch}\Gamma^f_{cv}$ to 
$$i=jk: \mbox{$\RRc^f$-modules} \lra \mbox{$qce$-$\RRc^f$-modules}$$
\end{enumerate}
\end{thm}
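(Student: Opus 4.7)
The plan is to follow the strategy of \cite{tnq2}, adapting to the flag setting where the pair techniques do not transport directly. Cases (1), (2), (3), concerning $\RRc^p$-modules, are established in \cite{tnq2} by explicit formulas that exploit the fact that each ring $R(G/K)$ is a finite product of polynomial rings; I would cite these directly and concentrate on the flag cases.

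For case (5), I would observe that both extended modules and $qce$-modules are middle-independent: extended modules are first-determined by definition, and $qce$-modules are also last-determined. Hence the categories $\mbox{$e$-$\RRc^f$-mod}$ and $\mbox{$qce$-$\RRc^f$-mod}$ fit into the middle-independent setting of Lemma \ref{lem:QPisflag}, and are equivalent to the corresponding pair categories. Under these equivalences the inclusion $j$ of case (5) corresponds to the pair inclusion of case (2), so $\Gamma^f_{ch}$ is obtained from $\Gamma^p_{ch}$ by transport along the equivalences. Case (6) is then immediate as the composite $\Gamma^f_c=\Gamma^f_{ch}\Gamma^f_{cv}$ right adjoint to $i=jk$.

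Case (4) is the only genuinely new construction, because the codomain $\mbox{$e$-$\RRc^f$-mod}$ is middle-independent but the domain $\mbox{$\RRc^f$-mod}$ is not, so transport along Lemma \ref{lem:QPisflag} is unavailable. The plan here is to imitate the construction of $\Gamma^p_{cv}$ from \cite{tnq2} in the flag setting. For an $\RRc^f$-module $M$ and a flag $F=(L_0\supset\cdots\supset L_t)$, one defines $(\Gamma^f_{cv}M)(F)$ by an extension-of-scalars formula of the shape $R^f(F)\otimes_{R(L_0)}\phi^{L_0}M$, where $\phi^{L_0}M$ is the appropriate $L_0$-supported piece of $M$ (explicitly, a torsion submodule with respect to the Euler classes at $L_0$, as in \cite{tnq2}). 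The structure maps associated to inclusions of flags are induced functorially, with middle terms absorbed since the output is middle-independent by construction. The adjunction identities are checked by a direct computation mimicking \cite{tnq2}.

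The main obstacle will be the bookkeeping in case (4): the explicit construction must be compatible with the horizontal structure maps indexed by changing first entries, and with the interaction between middle terms of flags, which have no analogue in the pair case. The finiteness hypothesis on $\RRc$ (each value a finite product of polynomial rings) is essential so that the torsion functors behave well and the associated extended modules can be formed termwise, just as in \cite{tnq2}. An alternative, slightly heavier but more systematic, route is to build a right adjoint $\mathrm{mi}\colon\mbox{$\RRc^f$-mod}\to\mbox{middle-independent-$\RRc^f$-mod}$ to the inclusion (as an equalizer of structure maps over subflags with equal first and last terms), then transport $\Gamma^p_{cv}$; I would adopt whichever presentation is cleanest in practice.
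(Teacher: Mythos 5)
Your treatment of cases (1)--(3) (citing \cite{tnq2}), case (5) (transporting $\Gamma^p_{ch}$ along the quasi-inverse pair $(f,p)$ of Lemma~\ref{lem:QPisflag}, since both $e$-modules and $qce$-modules are middle-independent), and case (6) (composition) all match the paper. You also correctly identify case (4) as the place where transport breaks down. The problem is with what you propose to do there.

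The formula you sketch, $(\Gamma^f_{cv}M)(F)=R^f(F)\otimes_{R(L_0)}\phi^{L_0}M$ with $\phi^{L_0}M$ a ``torsion submodule with respect to the Euler classes at $L_0$'', is not what a right adjoint to $k$ looks like, and is not what the paper (or, by its own account, \cite{tnq2}) does. A torsion submodule of $M(L_0)$ depends only on the $R(L_0)$-module $M(L_0)$; it cannot see the structure maps $M(L_0)\lra M(K\supset L_0)$ or the values of $M$ on larger flags, which a right adjoint must encode. Concretely: a map from an extended $T$ to $M$ at the singleton flag $L_0$ must be compatible with $T(K\supset L_0)=R(K\supset L_0)\otimes_{R(K)}T(K)\lra M(K\supset L_0)$ for every $K\supset L_0$, and this constraint genuinely depends on all those structure maps. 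The paper therefore constructs $(k^!M)(L)$ by \emph{induction on codimension}, as the inverse limit of the span $M(L)\lra M(K\supset L)\lla (k^!M)(K\supset L)$ ranging over $K\supset L$ of lower codimension, where the right leg uses the values of $k^!M$ already defined (and the extended formula $(k^!M)(K\supset L)=R(K\supset L)\otimes_{R(K)}(k^!M)(K)$). The adjunction is then verified directly from the universal property of the inverse limit, again by induction on codimension. This limit construction is the content you are missing.

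Two further inaccuracies. First, the finiteness claim is wrong: $\RRc(G/K)=\cO_{\cF/K}=\prod_{\tK\in\cF/K}H^*(BG/\tK)$ is an \emph{infinite} product (there are infinitely many subgroups with a given identity component), and in any case the paper states explicitly that $\Gamma_v$ needs only a dimension function on $\Sigma$, not finiteness of the rings; finiteness is required only for $\Gamma_h$. Second, your alternative route through a hypothetical coreflection $\mathrm{mi}$ onto middle-independent modules is not established: you would have to prove that such a right adjoint exists before you could compose, and the proposed equalizer over subflags with equal endpoints does not obviously produce a module satisfying the universal property (nor is this the route the paper takes).
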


We note that Part 1 is \cite[Theorem 7.1]{tnq2} and  Part 2 is
\cite[Theorem 8.1]{tnq2}. Part 3
follows from Parts 1 and 2 and Part 6 follows from Parts 4 and 5. 

We will prove Part 4 in Subsection \ref{subsec:Gammav}. 
In Subsection \ref{subsec:Gammah} we show how to deduce Part 5 from
Part 2. Finally in Subsection \ref{subsec:Gammad} we will discuss the
category of $\RRd^f$ modules and construct a right adjoint from
$\RRd^f$-modules to $qce$-$\RRc^p$-modules. 

\subsection{The associated extended functor}
\label{subsec:Gammav}

The purpose of this section is to give a construction of a functor
$\Gamma_v$ replacing an $\RRc^f$-module by an extended $\RRc^p$-module, so that
its vertical structure maps become extensions of scalars. The proof is
a direct adaption of \cite[Theorem 7.1]{tnq2}. In fact it applies
whenever $\Sigma$ has a dimension function as described above.

\begin{thm} 
There is a right adjoint $\Gamma_v=k^!$  to the inclusion
$$\mbox{e-$\RRc^f$-mod} \stackrel{k}\lra \mbox{$\RRc^f$-mod}.  $$
\end{thm}

We will give an explicit construction of the functor $k^!$, referring
the reader to \cite[Section 7]{tnq2} for motivating discussion.

\begin{defn}
(i) The {\em codimension} of a flag $F$ is the codimension of
the largest element, $fF$.

(ii)  
Given an $\RRc^f$-module  $M$, we describe the construction  of the 
associated extended module $k^!M$. We will describe how to define $k^!M$ on
singleton flags, $(k^!M)(L)$ and then use
extendedness to determine the values on other flags with first element
$L$: 
$$(k^!M)(E) :=R(E)\tensor_{R(L)} (k^!M)(L). $$

We proceed in order of increasing codimension, starting in codimension
0 by taking $(k^!M) (G)=M(G)$.  Assume $k^!M$ has been defined  on flags of codimension $\leq n$ in such a way that the vertical
maps for flags of codimension $\leq n$ from each point are extensions
of scalars. Now suppose $L$ is of codimension $n+1$. 

The value at $L$ is determined as an inverse limit of a diagram with
two rows, the zeroth given by existing values of $k^!M$  on flags of
codimension $n$ and the first by  the values of $M$ itself. The diagram takes the form
$$
\diagram 
\bullet \rto \dto &(K\supset L),0) \dto\\
((L),1) \rto &((K\supset L), 1)
\enddiagram$$
where $K$ runs through the codimension $\leq n$ subgroups containing
$L$. Since we are defining a middle-independent module, it is not
necessary to mention longer flags. More precisely, 
$$
k^!(L)=\ilim \left( \diagram 
&(k^!M)(K\supset L) \dto\\
M(L) \rto &M(K\supset L)
\enddiagram\right) $$
\end{defn}

\begin{lemma}
The maps $\lambda: k^!M \lra M$ induce isomorphisms
$$\lambda_*:\Hom (T,k^!M) \lra \Hom (T,M )$$
for any extended $\RRc^f$-module $T$. In particular $k^!$ is right adjoint
to the inclusion 
$$k: \mbox{e-$\RRc^f$-mod}\lra \mbox{$\RRc^f$-mod}.$$
\end{lemma}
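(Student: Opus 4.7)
The plan is to proceed by induction on codimension, constructing a lift $\tilde\phi:T\to k^!M$ of any given $\phi:T\to M$, matching the inductive construction of $k^!M$. Because both $T$ and $k^!M$ are extended, a natural transformation between them as $\RRc^f$-modules is determined by its components $\tilde\phi_L:T(L)\to (k^!M)(L)$ on singleton flags, together with compatibility with the flag inclusions $(L)\leq (K\supset L)$; other inclusions of flags are either base-change of these or identities by middle-independence of $R^f$. Thus it suffices to build the $\tilde\phi_L$ uniquely so that $\lambda_L\circ\tilde\phi_L=\phi_L$ and the pair compatibility holds.

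For the base case $L=G$ of codimension $0$, the construction gives $(k^!M)(G)=M(G)$, forcing $\tilde\phi_G=\phi_G$. For the inductive step, suppose $\tilde\phi_K$ has been defined for all $K$ of codimension $\leq n$. For $L$ of codimension $n+1$, the value $(k^!M)(L)$ is the inverse limit of the diagram with entries $M(L)$, $M(K\supset L)$, and $(k^!M)(K\supset L)$ for $K\supset L$ of codimension $\leq n$. A cone at $T(L)$ on this diagram must send $T(L)$ to $M(L)$ by $\phi_L$ and, for each such $K$, to $(k^!M)(K\supset L)$ by the composite
\[ T(L)\lra T(K\supset L)=R(K\supset L)\otimes_{R(K)}T(K)\stackrel{1\otimes\tilde\phi_K}{\lra}R(K\supset L)\otimes_{R(K)}(k^!M)(K)=(k^!M)(K\supset L), \]
using extendedness of $T$ and of $k^!M$ for the two equalities. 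The universal property of the limit then delivers a unique $\tilde\phi_L:T(L)\to (k^!M)(L)$.

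The essential verification is that this really forms a cone, i.e.\ that $\lambda_{K\supset L}$ applied to the displayed composite agrees with $T(L)\stackrel{\phi_L}{\lra} M(L)\lra M(K\supset L)$. Using the inductive hypothesis $\lambda_K\circ\tilde\phi_K=\phi_K$ and the fact that $\lambda_{K\supset L}$ is the base change of $\lambda_K$ along $R(K)\to R(K\supset L)$ (since $k^!M$ is extended and $\lambda$ is a natural transformation), this reduces to the identity $\phi_{K\supset L}=\mu_M\circ (1\otimes\phi_K)$, which is exactly the naturality of $\phi$ with respect to $(K)\leq (K\supset L)$ combined with extendedness of $T$ forcing $\phi_{K\supset L}$ to be the base change of $\phi_K$. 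Once $\tilde\phi_L$ is produced on all singletons, extending by $R(F)\otimes_{R(fF)}(-)$ gives components on all flags, and the required naturality on the remaining flag inclusions either holds by construction (horizontal extensions) or is exactly the cone condition just verified (singleton-to-pair inclusions).

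The main obstacle is the bookkeeping and confirming the codimension induction is well-founded: $(k^!M)(L)$ is defined by a limit referencing $k^!M$ at flags $(K\supset L)$ whose first term has strictly smaller codimension than $L$, so the inductive hypothesis supplies exactly the lifts $\tilde\phi_K$ needed at each stage. With existence and uniqueness established at every step, projection onto the $M(L)$-component of the limit recovers $\phi_L$, showing $\lambda_*:\Hom(T,k^!M)\to\Hom(T,M)$ is a bijection, which is the claimed adjunction.
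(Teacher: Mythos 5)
Your proposal is correct and takes essentially the same approach as the paper: both proceed by induction on codimension, construct the lift on singleton flags via the universal property of the inverse limit defining $(k^!M)(L)$, extend to other flags by extendedness, and obtain uniqueness from the fact that the defining diagram for $(k^!M)(L)$ only refers to $M$ and to values at flags of lower codimension. Your verification of the cone condition is written out in slightly more detail than the paper, which leaves it implicit, but it is the same argument.
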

\begin{proof}
To see $\lambda_*$ is an epimorphism, suppose $f: T\lra M$ is a map, and 
we attempt to lift it to a map $f': T\lra k^!M$. We start
with $f'(G)=f(G)$, and proceed by induction on the codimension of the
flag. Once we have defined on a singleton flag $f'(L)$ we are forced
to use extendedness to define $f'(E)=\RRc (E)\tensor_{\RRc (L)}f'(L)$ on
flags $E$ with first term $L$. 

Now suppose $L$ is of codimension $n+1$ and that $f'$ is defined on
flags of lower codimension. 
$$\diagram T(L) \rto \dto
&(k^!M)(K\supset L) \dto\\
M(L) \rto &M(K\supset L)
\enddiagram $$
Since $(k^!M) (L)$ is defined as an inverse limit, we use its universal
property to give $f'(L): T(L)\lra k^!M(L)$. 

To see $\lambda_*$ is a monomorphism, suppose the two maps $f_1,f_2:L
\lra M$ give the same map to $k^!M$.  Evidently $f_1(G)=f_2(G)$, so we
may consider the minimum codimension in which they disagree, perhaps
codimension $n+1$; by extendedness they must therefore disagree on a
subgroup $L$ of codimension $n+1$. However the defining diagram for
$k^!M(L)$ involves only the values in $M$ and in flags of lower
codimension so the universal property shows $f_1(L)=f_2(L)$,
contradicting the choice of $L$. 
\end{proof}

\subsection{The horizontal torsion functor $\Gamma_h$}
\label{subsec:Gammah}

Consider the diagram
$$
\diagram
\mbox{$qce$-$\RRc^p$-modules} \rto<0.7ex>^f \dto<-0.7ex>_j &
\mbox{$qce$-$\RRc^f$-modules} \dto<-0.7ex>_j \lto<0.7ex>^p\\
\mbox{$e$-$\RRc^p$-modules} \rto<0.7ex>^f \dto<-0.7ex>_k \uto<-0.7ex>_{\Gamma^p_{ch}} &
\mbox{$e$-$\RRc^f$-modules} \dto<-0.7ex>_k \lto<0.7ex>^p \uto<-0.7ex>_{\Gamma^f_{ch}}\\
\mbox{$\RRc^p$-modules} \rto<0.7ex>^f  \uto<-0.7ex>_{\Gamma^p_{cv}}&
\mbox{$\RRc^f$-modules} \uto<-0.7ex>_{\Gamma^f_{cv}}\\
\enddiagram$$

The two $\Gamma^p$ functors on the left were constructed in
\cite{tnq2}. The functor $\Gamma^f_{cv}$ at the bottom right was
constructed in Subsection \ref{subsec:Gammav}. The right adjoint to
$j^f=fj^pp$ is  the composite  $\Gamma^f_{cv}:=f\Gamma^p_{ch}p$. The
point is that $f$ and $p$ are quasi-inverse and hence both left and
right adjoint to each other.

\subsection{The dimensional torsion functor $\Gamma_d$}
\label{subsec:Gammad} A special case of the results of Section
\ref{sec:pistar}  shows that the functor $e$ from $\RRd^f$-modules to
$\RRc^f$-modules has a {\em left} adjoint, $d_*$. This gives a diagram
$$
\diagram
\mbox{$qce$-$\RRc^f$-modules} \rto<-0.7ex>_{d_!^e}\dto<-0.7ex>_i &
\mbox{$pqce$-$\RRd^f$-modules} \dto<-0.7ex>_{d_*ie}\lto<-0.7ex>_e\\
\mbox{$\RRc^f$-modules}  \uto<-0.7ex>_{\Gamma^f_{c}} \rto<0.7ex>^{d_*}&
\mbox{$\RRd^f$-modules}\lto<0.7ex>^e \uto<-0.7ex>_{\Gamma^f_{d}}\\
\enddiagram$$
in which $\Gamma^f_d=d_!^e\Gamma_c^f e$. This is right adjoint to
$d_*ie$, and not to $i$ (typically $d_*ieM$ will not be a
$p$-module). This reflects the fact that $i$ is not a left adjoint
(coproducts in the category of $p$-modules are not coproducts in the
ambient category of $\RRd^f$-modules). In our applications \cite{tnqcore} we will
actually use $p\Gamma_c^fe: \mbox{$\RRd^f$-modules}\lra
\mbox{$qce$-$\RRc^p$-modules}$, which in any case takes us to the
category $\cA_c^p(G)$ that we want to work with.

\end{document}